\newtheorem{theorem}{Theorem}[section]
\newtheorem{lemma}{Lemma}[section]
\newtheorem{example}{Example}[section]
\newtheorem{remark}{Remark}[section]
\begin{document}
	\captionsetup[figure]{labelfont={bf},labelformat={default},labelsep=period,name={Fig.}}
	\captionsetup[table]{labelfont={bf},labelformat={default},labelsep=period,name={Table}}
	\begin{frontmatter}
		\title{A stabilized semi-implicit Fourier spectral method for  nonlinear space-fractional reaction-diffusion  equations}
		
		\author[]{Hui Zhang\fnref{label1}}
		\ead{m18766170439@163.com}
		\author[]{Xiaoyun Jiang\fnref{label1}}
		\ead{wqjxyf@sdu.edu.cn}
		\author[]{Fanhai Zeng\corref{cor1}\fnref{label2}}
		\ead{fanhaiz@foxmail.com}
		\cortext[cor1]{Corresponding author.}
		\author[]{George Em Karniadakis\fnref{label3}}
		\ead{george\_karniadakis@brown.edu}
		\address[label1]{School of Mathematics, Shandong University, Jinan 250100, PR China.}
		\address[label2]{Department of Mathematics, National University of Singapore, Singapore 119076.}
		\address[label3]{Division of Applied Mathematics, Brown University, Providence RI, 02912.}
		\begin{abstract}
			The reaction-diffusion model can generate a wide variety of spatial patterns, which
				has been widely applied in chemistry, biology, and physics, even used to explain self-regulated pattern formation in the developing animal embryo \cite{Kondo2010}. In this work, a second-order stabilized semi-implicit time-stepping Fourier spectral method is presented for the reaction-diffusion systems of equations with space described by the fractional Laplacian. We adopt the temporal-spatial error splitting argument to illustrate that the proposed method is stable without imposing the CFL condition, and we prove an optimal $L^2$-error estimate. We also analyze the linear stability of the stabilized semi-implicit method and obtain a practical criterion to choose the time step size to guarantee the stability of the semi-implicit method. Our approach is illustrated by solving several problems of practical interest, including the fractional Allen-Cahn, Gray-Scott and FitzHugh-Nagumo models, together with an analysis of the properties of these systems in terms of the fractional power of the underlying Laplacian operator, which are quite different from the patterns of the corresponding integer-order model. 
		\end{abstract}
		\begin{keyword}
			Space-fractional reaction-diffusion  equations\sep   Optimal error estimate \sep Semi-implicit time-stepping method \sep Fourier spectral method\sep Linear stability
		\end{keyword}
	\end{frontmatter}
	\section{Introduction}
	\label{intro}
	We consider the following two-dimensional nonlinear space-fractional reaction-diffusion equation \cite{Pindza2016,Simmons2016,Lee2018},
	\begin{equation}\begin{aligned}\label{e1.1}
	\left\{ \begin{array}{ll}
	{\partial_t}u  (x,t) =-K_{\alpha}(-\Delta)^{\alpha/2}u(x,t)+G(u(x,t)), \ (x,t)\in \Omega\times I,\\
	u(x,0)=u^{0}(x),\  x\in \Omega,\\
	u(\cdotp,t) \mathrm{\ is \ 2\pi-periodic}, \ x\in \partial\Omega,\  \forall t\in I,\\
	\end{array}\right.																					
	\end{aligned}\end{equation}where $\Omega=(0,2\pi)^{2}$, $I=(0,T]$, $x=(x_{1},x_{2})$, $K_{\alpha}>0$ is the diffusion coefficient, $G(u)$ is a nonlinear function that satisfies the local Lipschitz condition, and $(-\Delta)^{\alpha/2}$ $(1<\alpha\leq2)$ is the fractional Laplacian operator defined by \cite{Ainsworth2017}
	\begin{equation} (-\Delta)^{\alpha/2}u(x)=\sum_{k,l\in\mathbb{Z}}(k^{2}+l^{2})^{\alpha/2}\hat{u}_{kl}e^{ikx_{1}+ilx_{2}},
	\end{equation}
	in which $i^2=-1$ and $\hat{u}_{kl}$ are the Fourier coefficients,
	\begin{equation}
	\hat{u}_{kl}=(u,e^{ikx_{1}+ilx_{2}})=\frac{1}{(2\pi)^{2}}\int_{\Omega}u e^{-(ikx_{1}+ilx_{2})}dx.
	\end{equation}
	For any function $u(x)\in L_{per}^{2}(\Omega)$, we have \cite{Zhang2014,Duo2016}
	\begin{equation}
	u(x)=\sum_{k,l\in\mathbb{Z}}\hat{u}_{kl}e^{ikx_{1}+ilx_{2}}.
	\end{equation}
	
	The fractional reaction-diffusion equation has attracted much attention in the last few years due to its wide applications in many fields of science and engineering \cite{Lee2018,Meerschaert1999,Bu2014,Mao2018,Arshad2017,Song2016,Acetoa2017,Kilbas2006,Metzler2000,Meerschaert2012,Roop2004,Bueno2014}. For $G(u)=u-u^3$, \eqref{e1.1} becomes the space-fractional Allen--Cahn equation \cite{Song2016}, which describes the mixture of two incompressible fluids. The fractional order controls the sharpness of the interface, which is typically diffusive in integer-order phase-field models. As shown in  \cite{Wangting2019}, the space-fractional Gray--Scott model is a coupled counterpart of \eqref{e1.1}, which represents an autocatalytic reaction-diffusion process between two chemical species \cite{Bueno2014}. The space-fractional FitzHugh-Nagumo model describes the ionic fluxes as a function of the membrane potential \cite{Klevin2015}.
	
	Numerical methods for space-fractional reaction-diffusion equations similar to \eqref{e1.1} have been extensively investigated.   Ervin et al. \cite{Ervin2007} considered the numerical approximation for space-fractional diffusion equations containing a nonlocal quadratic nonlinearity, in which an optimal $H^{\alpha}$-error estimate was presented under the time step restriction $\tau\leq ch$, where $\tau$ is the temporal step size and $h$ is the spatial mesh size. An alternating direction implicit Galerkin-Legendre spectral method for the two-dimensional Riesz space fractional nonlinear reaction-diffusion equation was proposed in \cite{Zeng2014}, which requires $\tau^2 N\leq c$ to ensure stability or convergence, where $N$ is the degree of polynomial spatial basis. Similar results were obtained in \cite{Ma2002,Wu2004,Cannon1988,Hayes1981}, in which numerical methods for integer-order partial differential equations were proposed and analyzed. The conditions as $\tau\leq ch$ and $\tau^2 N\leq c$ were required to prove the stability and convergence of the aforementioned works due to the use of the inverse inequality in the numerical analysis. However, the conditions as $\tau\leq ch$ and $\tau^2 N\leq c$ in \cite{Zeng2014,Ma2002,Wu2004,Cannon1988,Hayes1981} can be weakened by a temporal-spatial error splitting argument  \cite{Li2012}. In this work, we extend the temporal-spatial error splitting argument to analyze the stabilized semi-implicit Fourier spectral method for the two-dimensional nonlinear space-fractional reaction-diffusion equations, which has not been fully studied. Stabilized  time-stepping methods  have been applied for solving nonlinear integer-order partial differential equations, such as Allen-Cahn and Cahn-Hilliard equations, see e.g. \cite{He2007,Wang2018,Shen2010,Wu2014}.
	
	
	The main contribution of this work is to prove that the stabilized semi-implicit method is stable without imposing the CFL condition by adopting the temporal-spatial error splitting argument \cite{Li2012,Li2013,Li2014}, and present an optimal error estimate. We also analyze the linear stability of the
		stabilized semi-implicit method and obtain the explicit form of the stability condition criterion
		(see \eqref{e33.04-2} and \eqref{e33.05}),
		which is verified by numerical simulations.
	The temporal-spatial error splitting technique was originally developed for numerical analysis of integer-order equations. Up to now, this method has been applied to the numerical analysis of time-fractional partial differential equations \cite{Li2017b,Li2018}. In this paper, we extend this method to the two-dimensional nonlinear space-fractional partial differential equations, which has not been fully explored. Our approach is illustrated by solving several problems of practical interest, including the fractional Allen-Cahn, Gray-Scott and FitzHugh-Nagumo models.
	
	This paper is organized as follows. Section \ref{sec2} develops a second-order stabilized semi-implicit time-stepping Fourier spectral method to solve the two-dimensional nonlinear space-fractional reaction-diffusion equation. In Section \ref{sec4}, we analyze the linear stability of the stabilized semi-implicit method and obtain a practical criteria to choose the time step size to guarantee the stability of the semi-implicit method. The semi-implicit method is extended to a system of space-fractional reaction-diffusion models in Section \ref{sec55}. Numerical examples are given in Section \ref{sec5} to confirm the theoretical analysis. Finally, the conclusions are summarized in Section \ref{sec6}.
	\section{The stabilized semi-implicit Fourier method}
	\label{sec2}
	Let $C_{per}^{\infty}(\Omega)$ be the set of all restrictions onto $\Omega$ of all complex-valued, $2\pi$-periodic, $C^{\infty}$-functions on $\mathbb{R}^{2}$.
	For a nonnegative real number $r$, let $H_{per}^{r}(\Omega)$ be the closure of $C_{per}^{\infty}(\Omega)$ with the norm $\|\cdot\|_{r}$ and semi-norm $|\cdot|_{r}$
	defined by (see \cite{Ainsworth2017})
	\begin{equation}
	|u|^{2}_{r}=\sum_{k,l\in \mathbb{Z}}\hat{u}_{kl}^{2}(k^{2}+l^{2})^{r},\quad\|u\|^{2}_{r}=\sum_{k,l\in \mathbb{Z}}\hat{u}_{kl}^{2}(1+k^{2}+l^{2})^{r}.
	\end{equation}
	For simplicity of the numerical analysis, we also assume that $\hat{u}_{00}=0$, which implies that the
	norm $\|\cdot\|_{r}$ and semi-norm $|\cdot|_{r}$ are equivalent.

	For a positive integer $N$, the  function space is denoted by
	$$X_{N}=\mathrm{span}\left\{e^{ikx_{1}+ilx_{2}}:-{N}/{2}\leq k,l\leq {N}/{2}-1\right\}.$$
	Define the orthogonal projection operator $P_{N}$ as follows
	\begin{equation}\begin{aligned}
	\label{e2.7}
	(u-P_{N}u,v)=0,\quad  \forall v\in X_{N},\quad u\in L_{per}^{2}(\Omega).
	\end{aligned}\end{equation}

	For the temporal discretization, we divide the interval $[0,T]$ into $K$ subintervals with a time step size $\tau=T/K$. Let $t_n=n\tau$, $u^{n}=u(x,t_{n})$, $0\leq n\leq K$, and denote
	\begin{equation}
	\label{e2.2}
	D_{1}u^{n}=\frac{u^{n}-u^{n-1}}{\tau},\quad
	D_{2}u^{n}=
	\frac{3}{2}D_{1}u^{n}-\frac{1}{2}D_{1}u^{n-1}.\\
	\end{equation}
	
	The time derivative is discretized by the second-order backward difference formula,
	we obtain
	$$ D_{2}u^{n}+K_{\alpha}(-\Delta)^{\alpha/2}u^n=G(u^{n}) + O(\tau^2),\quad n\geq2.$$
	Dropping the truncation error and applying the Fourier spectral method (other methods  can be applied, i.e., finite difference/element method) in space, we can
	obtain a fully implicit numerical scheme, which can be solved iteratively.
	Compared with the explicit method or the linearized method,
	the fully implicit method is costly and its implementation is a little complicated.
	The extrapolation has been widely used to obtain
	the semi-implicit method, which needs to solve a linear system.
	For example, The  second-order extrapolation  $G(u^n)=2G(u^{n-1})-G(u^{n-2})+O(\tau^2)$
	can be applied, which preserves second-order accuracy in time, but
	it needs a small time step to ensure stability.  In order to balance the stability and convergence, we follow \cite{He2007} to develop the stabilized method for the time discretizetion,
	that is, we replace  $G(u^n)$ with $2G(u^{n-1})-G(u^{n-2})-\tau\kappa(D_1u^n-D_1u^{n-1})$
	to obtain
	\begin{equation}
	\label{e3.4}
	D_{2}u^{n}+K_{\alpha}(-\Delta)^{\alpha/2}u^n=2G(u^{n-1})-G(u^{n-2})
	- \tau\kappa(D_1u^n-D_1u^{n-1}) + R^{n},\ 2\leq n\leq K,\\
	\end{equation}
	where $R^{n} = O(\tau^{2})$, $\kappa\geq 0$, and
	$\tau\kappa(D_1u^n-D_1u^{n-1})=\kappa(u^n-2u^{n-1}+u^{n-2})=O(\tau^2)$ is a penalty term that balances the stability and accuracy.
	
	From \eqref{e3.4}, the fully discrete Fourier spectral method for \eqref{e1.1}
	is given by: Find $U_{N}^{n}\in X_{N}$ for $n\geq2$ such that for all $v\in X_{N}$,
	\begin{equation}\label{e2.3}
	\left\{\begin{aligned}
	&\mathcal{A}^n(U_{N},v) =(P_N(2G(U_{N}^{n-1})-G(U_{N}^{n-2})),v)
	-\tau\kappa(D_1U_{N}^n-D_1U_{N}^{n-1},v),\\
	&U_{N}^{0}=P_{N}u^{0},\\
	&U_{N}^{1}=P_{N}u^{1},
	\end{aligned}\right.
	\end{equation}
	where
	\begin{equation} \label{Ann}
	\mathcal{A}^n(u,v) = (D_{2}u^n,v)+K_{\alpha}((-\Delta)^{\alpha/2}u^{n},v).
	\end{equation}
	
	\begin{remark}\label{re2.1}
		In real applications, $U_N^1$ can be obtained by the following simple approach
		$$U_N^1=P_N(u_0+\tau\partial_tu(0)).$$
		Other approaches can be applied. For example, $U_N^1$ can be obtained
		by the fully implicit Crank--Nicolson Fourier spectral method.
	\end{remark}
	
	An optimal error estimate of the numerical scheme (\ref{e2.3}) is provided
	in the following theorem, the relevant proof can be seen in Appendix A.
	\begin{theorem}\label{th2.1}
		Suppose that \eqref{e1.1} admits a unique solution $u(\cdot,t)\in  H^{r}(\Omega)$ satisfying \eqref{e2.5}.
		Let $U_{N}^{n}(2\leq n\leq K)$ be the solution of \eqref{e2.3} and $G'(z)$ is bounded
		for $|z|\leq M$, $M$ is a suitable positive constant.
		Then there exist two positive constants
		$\tau^*$ and $N^*$, when $\tau\leq \tau^*$ and $N\geq N^*$, it holds
		\begin{align}
		&\|u^{n}-U_{N}^{n}\|\leq C(\tau^{2}+N^{-r}). \label{eq2.6}
		\end{align}
	\end{theorem}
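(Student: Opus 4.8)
The plan is to follow the temporal–spatial error splitting argument of Li and collaborators \cite{Li2012,Li2013,Li2014}, adapted to the fractional Laplacian. First I would introduce the time-discrete-only solution, i.e. the sequence $u^n_\tau$ solving \eqref{e3.4} (with $R^n$ dropped) posed in the continuous periodic space $H^\alpha_{per}(\Omega)$ with the exact initial data, and split the error as $u^n-U_N^n=(u^n-u^n_\tau)+(u^n_\tau-U_N^n)=:e_\tau^n+e_N^n$. The estimate for $e_\tau^n$ is a purely temporal consistency/stability result for the second-order BDF scheme with the stabilizing penalty term: testing the error equation with $D_2 e_\tau^n$ (or an appropriate combination), using the positivity of the fractional Laplacian $((-\Delta)^{\alpha/2}v,v)=|v|_{\alpha/2}^2\geq 0$, the $G$-term Lipschitz bound, the $G^2$-type identity for the BDF2 operator $2(D_2 v^n, v^n)\geq \frac{1}{\tau}(\|v^n\|^2+\|2v^n-v^{n-1}\|^2-\|v^{n-1}\|^2-\|2v^{n-1}-v^{n-2}\|^2)+\frac{1}{\tau}\|v^n-2v^{n-1}+v^{n-2}\|^2$, and the non-negativity of the penalty contribution, followed by a discrete Gronwall inequality. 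This yields $\|e_\tau^n\|+(\text{energy terms})\leq C\tau^2$ with $C$ independent of $N$, and moreover, by a standard bootstrap, a bound on $u^n_\tau$ in $H^\alpha_{per}$ and in $L^\infty$ uniform in $\tau$ (so that $|u^n_\tau|\leq M+1$ for $\tau$ small).

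Next I would estimate $e_N^n$, which satisfies a fully discrete equation driven by projection errors. Writing $e_N^n=(u^n_\tau-P_N u^n_\tau)+(P_N u^n_\tau-U_N^n)=:\eta^n+\theta^n$, the term $\eta^n$ is controlled directly by the projection error estimate $\|u^n_\tau-P_N u^n_\tau\|\leq C N^{-r}\|u^n_\tau\|_r\leq CN^{-r}$ using the $H^\alpha$-regularity of $u^n_\tau$ established above; the diffusion term needs the standard inverse-free bound $\|(-\Delta)^{\alpha/2}(I-P_N)v\|\leq CN^{\alpha/2-r}|v|_r$ absorbed into the $N^{-r}$ term since $\alpha\le 2\le r$ is assumed through \eqref{e2.5}. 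For $\theta^n\in X_N$ one derives $\mathcal A^n(\theta,v)=(P_N(2G(u_\tau^{n-1})-G(U_N^{n-1})-G(u_\tau^{n-2})+G(U_N^{n-2})),v)-\tau\kappa(D_1\theta^n-D_1\theta^{n-1},v)+(\text{projection terms},v)$. Testing with $D_2\theta^n$, using again the BDF2 $G^2$-identity, the non-negativity of both the fractional-Laplacian term and the penalty term, and the local Lipschitz bound on $G$ — which is legitimate once we know $\|U_N^{n-1}\|_{L^\infty},\|U_N^{n-2}\|_{L^\infty}\leq M$, a fact supplied by the induction hypothesis — gives a discrete Gronwall estimate $\|\theta^n\|\leq C(\tau^2+N^{-r})$.

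The induction closing the argument is the crux, and it is where the CFL-free feature comes from. I would argue by induction on $n$: assuming $\|u^m-U_N^m\|\leq C(\tau^2+N^{-r})$ for $m<n$, I recover an $L^\infty$ bound on $U_N^m$ not by an inverse inequality against $\|u^m-U_N^m\|$ (which would force $\tau\le ch$ or $\tau^2 N\le c$), but via $\|U_N^m\|_{L^\infty}\leq \|u^m_\tau-U_N^m\|_{L^\infty}+\|u^m_\tau\|_{L^\infty}$, bounding the first term with an inverse inequality applied only to the \emph{spatial} error $e_N^m=\eta^m+\theta^m\in$ (projection + $X_N$), so that the inverse inequality hits the already-small quantity $\|\theta^m\|\leq C(\tau^2+N^{-r})$ and the harmless projection remainder. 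In two dimensions the relevant inverse estimate costs a factor $N^{s}$ (with $s$ small, $s<r-1$ say) or a logarithmic factor for $L^\infty$, and since $\tau\le\tau^*$, $N\ge N^*$ make $N^{s}(\tau^2+N^{-r})\le 1$, the bound $\|U_N^m\|_{L^\infty}\leq \|u^m_\tau\|_{L^\infty}+1\leq M$ follows, justifying the Lipschitz step and closing the induction. The main obstacle is precisely the bookkeeping in this induction — choosing $\tau^*,N^*$ compatibly, ensuring the constants in the $e_\tau$, $\eta$, $\theta$ estimates do not secretly depend on $N$, and verifying that the stabilization parameter $\kappa\ge 0$ only ever contributes with a favorable sign in all three energy arguments; the starting-value error $\|u^1-U_N^1\|\le C(\tau^2+N^{-r})$ must also be checked separately for the chosen initialization in Remark \ref{re2.1}. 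Finally, combining $\|u^n-U_N^n\|\leq\|e_\tau^n\|+\|\eta^n\|+\|\theta^n\|\leq C(\tau^2+N^{-r})$ gives \eqref{eq2.6}.
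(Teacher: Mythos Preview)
Your overall strategy---the temporal--spatial splitting via a time-discrete solution $u_\tau^n$, with the inverse inequality applied only to the spatial error to close the $L^\infty$ induction---matches the paper. But there is a real gap in reaching the optimal spatial rate. You bound $\eta^n=u_\tau^n-P_N u_\tau^n$ by $CN^{-r}\|u_\tau^n\|_r\le CN^{-r}$, yet your bootstrap only yields $H^\alpha$-regularity of $u_\tau^n$; with that, the projection error is $O(N^{-\alpha})$, and your $\theta^n$ inherits the same order (it carries no $\tau^2$ term either, since for the Fourier projection $P_N$ commutes with all the linear operators and the equation for $\theta^n$ has no truncation residual). The net result would be $C(\tau^2+N^{-\alpha})$, not $C(\tau^2+N^{-r})$. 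Upgrading $u_\tau^n$ to $H^r$ uniformly in $\tau$ would require a separate energy argument and stronger smoothness of $G$ than the paper assumes. The paper's remedy is a third step you omit: after using the time-discrete comparison merely to secure $\|U_N^n\|_\infty\le M_2$ (their Theorem~A.2, which only needs the $N^{-\alpha}$ bound on $P_NU^n-U_N^n$), they rerun the energy estimate for $\theta_N^n:=P_N u^n-U_N^n$, comparing $U_N^n$ directly with the projection of the \emph{exact} solution $u^n\in H^r$. Now the projection error is genuinely $N^{-r}$, the truncation term $R^n$ reappears and supplies the $\tau^2$, and the Lipschitz step is legitimate because $\|U_N^n\|_\infty$ is already known.

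A lesser issue: your test-function choice is inconsistent. Testing the error equation with $D_2 e_\tau^n$ does not make the diffusion term $K_\alpha((-\Delta)^{\alpha/2}e_\tau^n,\cdot)$ telescope, and the $G^2$-identity you quote is for testing with $e_\tau^n$, not $D_2 e_\tau^n$; with either choice the penalty $-\tau\kappa(D_1e^n-D_1e^{n-1},\cdot)$ is not sign-definite. The paper instead tests with $2\tau(-\Delta)^{\alpha/2}D_1 e^n$ for the time-discrete error (to control $|e^n|_\alpha$, which is what the Sobolev embedding into $L^\infty$ needs in two dimensions) and with $2\tau D_1\theta_N^n$ for the fully discrete one; with the $D_1$ multiplier both the diffusion and penalty contributions telescope cleanly after summation in $n$.
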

	\section{Linear stability}
		\label{sec4}
		We investigate the linear stability of the method (\ref{e2.3}) in this section.
		Let	$G(u)=\rho u$ and $\rho<0$, we have
		\begin{equation}\begin{aligned}
		\label{e33.01}
		\mathcal{A}^n(U_{N},v) =\rho(2 U_{N}^{n-1}-U_{N}^{n-2},v)-\kappa(U^n_{N}-2U^{n-1}_{N}+U^{n-2}_{N},v).
		\end{aligned}\end{equation}
		The above equation can be equivalently written as
		\begin{equation}\label{e33.02}\begin{aligned}
		\left(\frac{3}{2}+\mu_{k,l}\tau +\kappa \tau \right)\hat{U}_{k,l}^n-\left(2+2\rho\tau+2\kappa\tau\right)\hat{U}_{k,l}^{n-1}+\left(\frac{1}{2}+\rho\tau+\kappa\tau\right)\hat{U}_{k,l}^{n-2}=0,
		\end{aligned}\end{equation}
		where $\mu_{k,l}=K_{\alpha}(k^2+l^2)^{\alpha/2}$. The characteristic equation
		of the above recurrence formula is
		\begin{equation}\begin{aligned}
		\label{e33.03}
		\left(\frac{3}{2}+\mu_{k,l}\tau +\kappa \tau \right)\eta^2-\left(2+2\rho\tau+2\kappa\tau\right)\eta+\left(\frac{1}{2}+\rho\tau+\kappa\tau\right)=0.
		\end{aligned}\end{equation}
		The two roots of \eqref{e33.03} are
		\begin{equation}\label{e33.033}\begin{aligned}
		&\eta^{\pm}=\frac{2+2\rho\tau+2\kappa\tau\pm \sqrt{\triangle}}{3+2\mu_{k,l}\tau+2\kappa \tau },
		\end{aligned}\end{equation}
		where
		\begin{equation}\begin{aligned}
		\label{e33.034}
		\triangle=\left(2+2\rho\tau+2\kappa\tau\right)^2-\left(3+2\mu_{k,l}\tau +2\kappa \tau \right)\left(1+2\rho\tau+2\kappa\tau\right).
		\end{aligned}\end{equation}
		
		The recurrence relation \eqref{e33.02} is stable if $|\eta^{\pm}| < 1$.
		We discuss the following two cases.
		\begin{itemize}
			\item Case I: For $\triangle<0$ and $|\eta^{\pm}| < 1$, we have
			\begin{equation}\label{e33.035}\begin{aligned}
			&-1<\frac{1+2\rho\tau+2\kappa\tau}{3+2\mu_{k,l}\tau +2\kappa \tau}< 1
			\Longrightarrow\kappa > -\frac{\mu_{k,l}+\rho}{2} - \frac{1}{\tau}.
			\end{aligned}\end{equation}
			\item Case II: For $\triangle>0$ and $|\eta^{\pm}| < 1$, we have
			\begin{equation}\begin{aligned}
			\label{e33.0356}
			&4\kappa\tau+3\rho\tau+\mu_{k,l}\tau+4>0
			\Longrightarrow \kappa > -\frac{\mu_{k,l}+3\rho}{4}-\frac{1}{\tau}.
			\end{aligned}\end{equation}
		\end{itemize}
		
		Combining \eqref{e33.035}--\eqref{e33.0356}, we can obtain that
		the method (\ref{e33.01}) is   stable if
		\begin{equation}\begin{aligned}
		\label{e33.04}
		\kappa>\frac{-\mu_{k,l}-3\rho}{4}-\frac{1}{\tau}.
		\end{aligned}\end{equation}
		Since $\mu_{k,l}\geq0$ and $\tau>0$,  we have the following unconditional stability condition
		\begin{equation}\begin{aligned}
		\label{e33.04-2}
		\kappa>-\frac{3\rho}{4}.
		\end{aligned}\end{equation}
		
		In real applications, we need to choose $\tau\leq \tau^*<1$ in the numerical simulations
		to obtain suitably accurate numerical solutions. From \eqref{e33.04}, for the step size
		$\tau\in(0,\tau^*]$, the method  \eqref{e33.01} is stable if
		\begin{equation} \label{e33.05}
		\kappa>-\frac{3\rho}{4}-\frac{1}{\tau^*}.
		\end{equation}
		
		
		\begin{figure}
			\begin{center}
				\begin{minipage}{0.47\textwidth}\centering
					\centering
					\includegraphics[width=3in]{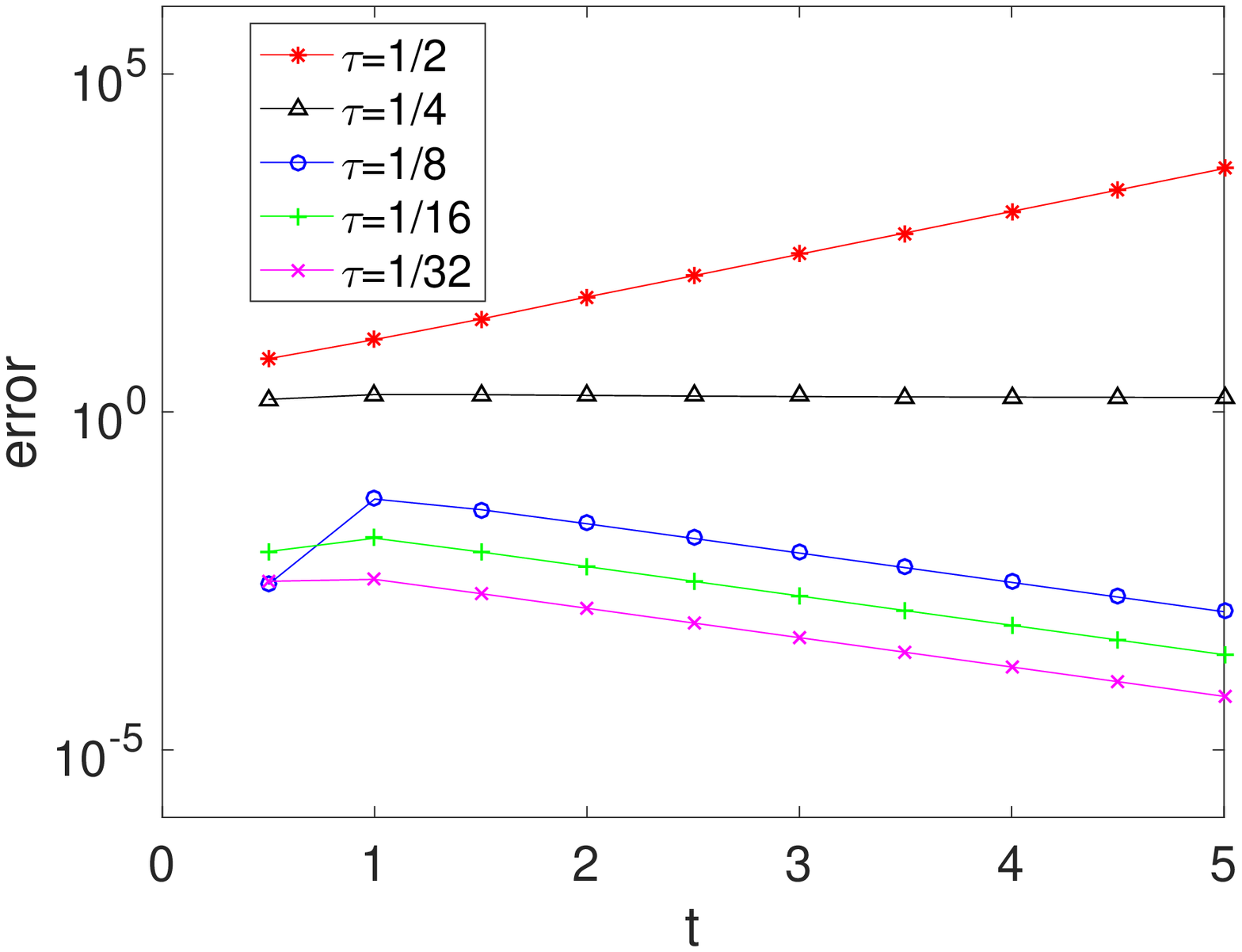}
					\par {(a) Case I.}
				\end{minipage}
				\begin{minipage}{0.47\textwidth}\centering
					\centering
					\includegraphics[width=3in]{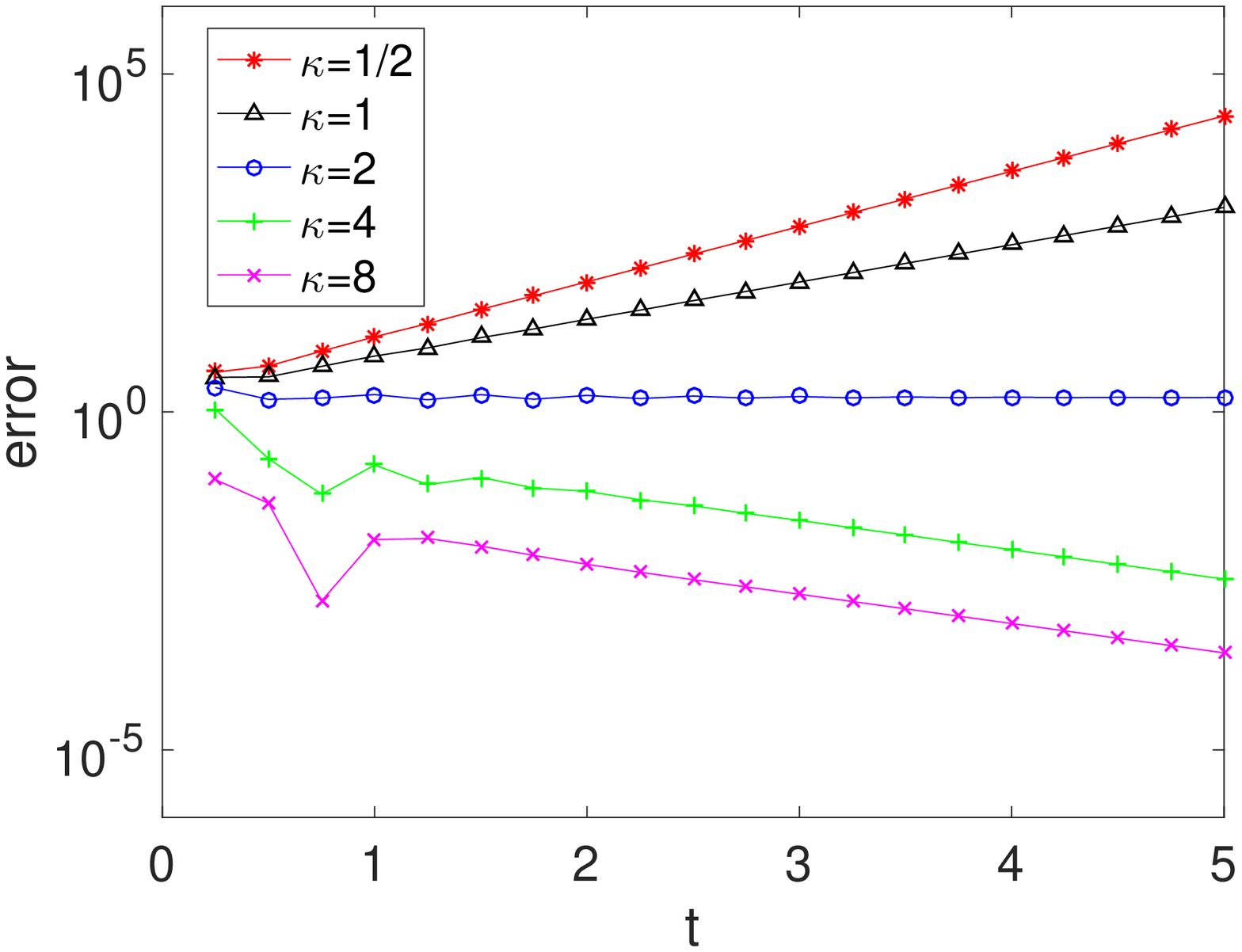}
					\par {(b) Case II.}
				\end{minipage}
			\end{center}
			\caption{The $L^2$ errors  for
				Example \ref{ex3.1}, $\alpha=1.3$.}
			\label{fig3-1}
		\end{figure}
		
		In the  following example, we verify the stability criterion \eqref{e33.05}.
		\begin{example}
			\label{ex3.1}
		We apply the method \eqref{e2.3} to solve \eqref{e1.1},
			where $G(u)=-8u+\exp(-t)(-\sin{(x_1+x_2)}+8\sin{(x_1+x_2)}+2^{\alpha/2}\ K_\alpha\sin{(x_1+x_2)})$.
			The exact solution of
			\eqref{e1.1} is $u=\exp(-t)\sin{(x_1+x_2)}.$
		\end{example}
		We consider the following two cases:
		\begin{itemize}
			\item Case I: $\kappa=2$, $\tau=2^{-j},1\leq j \leq 5, T = 5$.
			\item  Case II: $\tau = 1/4$, $\kappa=1/2,1,2,4,8$, $T = 5$.
		\end{itemize}
		The error is defined by
		$$e^n=\|U^n_N-u(\cdot,t_n)\|.$$
		
		From \eqref{e33.05}, we derive that the method is stable for this example
		if $\kappa> 6-1/\tau^*$ and $\tau\leq\tau^*$.
		For Case I, we conclude that
		the method is stable when $\tau\leq\tau^*=1/4$ and is unstable when $\tau>\tau^*=1/4$,
		which is also verified in Fig. \ref{fig3-1} (a).
		For Case II, we fix the step size $\tau=1/4$. It is easy to
		obtain that the method is stable if $\kappa>2$, which is also
		demonstrated from Fig. \ref{fig3-1} (b). We have also verified that the stability condition \eqref{e33.05} is independent of the fractional order $\alpha\in(1,2]$, but these results are not shown here.
	\section{Extension to a system of space-fractional reaction-diffusion models}
	\label{sec55}
	We extend the semi-implicit method to the following coupled system,
	\begin{equation}\begin{aligned}\label{eq4.9}
	\left\{ \begin{array}{ll}
	{\partial_t}u=-K_{u}(-\Delta)^{\alpha/2}u+G_{1}(u,v) \quad
	(x,t)\in \Omega\times I,\\
	{\partial_t}v=-K_{v}(-\Delta)^{\alpha/2}v+G_{2}(u,v),
	\quad(x,t)\in \Omega\times I.\\
	\end{array}\right.																				
	\end{aligned}\end{equation}
	
	Each equation of system \eqref{eq4.9} is discretized by the second-order stabilized semi-implicit time-stepping Fourier spectral scheme as \eqref{e2.3}. The fully numerical approximation for \eqref{eq4.9} is developed as follows: Find $U_{N}^n, V_{N}^n\in X_{N}$ for $n=2,3,\ldots,K$ such that
	\begin{equation}\label{eq5.1}
	\left\{\begin{aligned}
	&\mathcal{A}_{1}^n(U_{N},v)=(P_N(2G_1(U_{N}^{n-1})-G_1(U_{N}^{n-2})),v)-\tau\kappa(D_1U_{N}^n-D_1U_{N}^{n-1},v), \quad\forall v\in X_{N},\\
	&\mathcal{A}_{2}^n(V_{N},v)=(P_N(2G_2(V_{N}^{n-1})-G_2(V_{N}^{n-2})),v)-\tau\kappa(D_1V_{N}^n-D_1V_{N}^{n-1},v),\quad\forall v\in X_{N},\\
	&U_{N}^{0}=P_{N}u^{0},\quad U_{N}^{1}=P_{N}(u_0+\tau\partial_tu(0)),\\
	&V_{N}^{0}=P_{N}v^{0},\quad V_{N}^{1}=P_{N}(v_0+\tau\partial_tv(0)),
	\end{aligned}\right.\end{equation}
	where
	\begin{displaymath}\begin{aligned} \label{aaa}
	&\mathcal{A}_{1}^n(u,v) = (D_{2}u^n,v)+K_{u}((-\Delta)^{\alpha/2}u^{n},v),\\
	&\mathcal{A}_{2}^n(u,v) = (D_{2}u^n,v)+K_{v}((-\Delta)^{\alpha/2}u^{n},v).\\
	\end{aligned}\end{displaymath}
	
	In \eqref{eq4.9}, if we take
	\begin{equation}\begin{aligned}\label{eq4.0}
	&G_{1}(u,v)=-uv^{2}+F(1-u), \ G_{2}(u,v)=uv^{2}-(F+\lambda)v, 			
	\end{aligned}\end{equation}
	then we obtain the space-fractional Gray--Scott model.
	
	Reaction and diffusion of chemical species can produce a variety of patterns. A representative reaction and diffusion model is the Gray--Scott model, which is a variant of the autocatalytic Selkov model of glycolysis \cite{Gray1983,Gray1985}. Numerical simulations of this model were performed in \cite{Pearson1993} to find stationary lamellar patterns like those observed in earlier laboratory experiments on iodate-ferrocyanid-sulfite reactions \cite{Lee1993}. The Gray--Scott model corresponds to the following two reactions \cite{Pearson1993},
	\begin{equation}\begin{aligned}\label{eq4.11}
	U + 2V &\rightarrow3V,\\
	V &\rightarrow P,\\
	\end{aligned}\end{equation}
	both reactions are irreversible, so $P$ is an inert product. A nonequilibrium constraint is represented by a feed term for $U$. Both $U$ and $V$ are removed by the feed process. Note that, in the aforementioned literature on pattern dynamics of the Gray--Scott model, the models are all with standard diffusion, i.e., the diffusion operator is the normal Laplacian. When we study the effects of the superdiffusion, we have to employ the fractional Laplacian $(-\Delta)^{\alpha/2}$ for $1<\alpha\leq2$ on pattern formation of this model. This space-fractional Gray--Scott model describes an autocatalytic reaction-diffusion process between two chemical species with concentrations $u$ and $v$ \cite{Bueno2014}.
	
	Taking $K_{v}=0$ and
	\begin{equation}\begin{aligned}
	\label{eq4.1111}
	&G_{1}(u,v)=u(1-u)(u-\mu)-v, \\
	&G_{2}(u,v)=\varepsilon(\beta u-\gamma v-\delta),\\																					
	\end{aligned}\end{equation}we can get the space-fractional FitzHugh--Nagumo model.
	
	Spiral waves in excitable media provide an important example of self-organization phenomena in spatially distributed biological, physical, and chemical systems \cite{Winfree1987}, and can be modeled by the FitzHugh--Nagumo equations \cite{FitzHugh1961,Nagumo1962}.	The fractional FitzHugh--Nagumo model consists of a space-fractional nonlinear reaction-diffusion model and a system of ordinary differential equations, describing the ionic fluxes as a function of the membrane potential \cite{Bueno2014}. Mathematical models of electrical activity in cardiac tissue are becoming increasingly powerful tools in the study of cardiac arrhythmias. In this context, the fractional model presented here represents a new approach to dealing with the propagation of the electrical potential in heterogeneous cardiac tissue. In \cite{Bueno2014}, the authors proposed a fundamental rethinking of the homogenization approach via the use of a fractional Fick's law \cite{Magin2008,Meerschaert2006} and, in particular, a fractional FitzHugh--Nagumo model is introduced to capture the spatial heterogeneities and spatial connectivities in the extracellular domain through the use of fractional derivatives. A semi-alternating direction method for a 2-D fractional FitzHugh-Nagumo monodomain model on an approximate irregular domain was considered in \cite{Klevin2015}.
	
	\section{Numerical results}
	\label{sec5}
	In this section, we provide some numerical results to confirm our theoretical analysis. The error at $t=t_n$ is denoted by
	\begin{equation}
	\label{eqq2.5}
	e^n(\tau,N)=U_{N}^n-u^n_{ref},
	\end{equation}
	where $u_{ref}$ is the reference solution obtained
	using a smaller time step size $\tau=0.0005$ for a fixed $N$,
	or a larger $N=1024$ for a fixed time step size if the exact solution
	is not explicitly given.
	%
	The $L^{2}$-norm convergence orders in time and space are defined as
	\begin{equation}
	\label{eq:2.5}
	\mathrm{order}=\left\{ \begin{array}{ll}
	\frac{\log{(\|e^n(\tau_1,N)\|/\|e^n(\tau_2,N)\|)}}{\log{(\tau_{1}/\tau_{2})}}\quad \mathrm{in\  time},\\
	\frac{\log{(\|e^n(\tau,N_{1})\|/\|e^n(\tau,N_{2})\|)}}{\log{(N_{2}/N_{1})}}\quad \mathrm{in\  space},\\
	\end{array}\right.
	\end{equation}
	where $\tau_{1}\neq\tau_{2}$ and $N_{1}\neq N_{2}$.
	\begin{figure}
		\centering
		\subfigure[Case I.]{
			\begin{minipage}[t]{0.43\textwidth}
				\centering
				\includegraphics[width=2.5in]{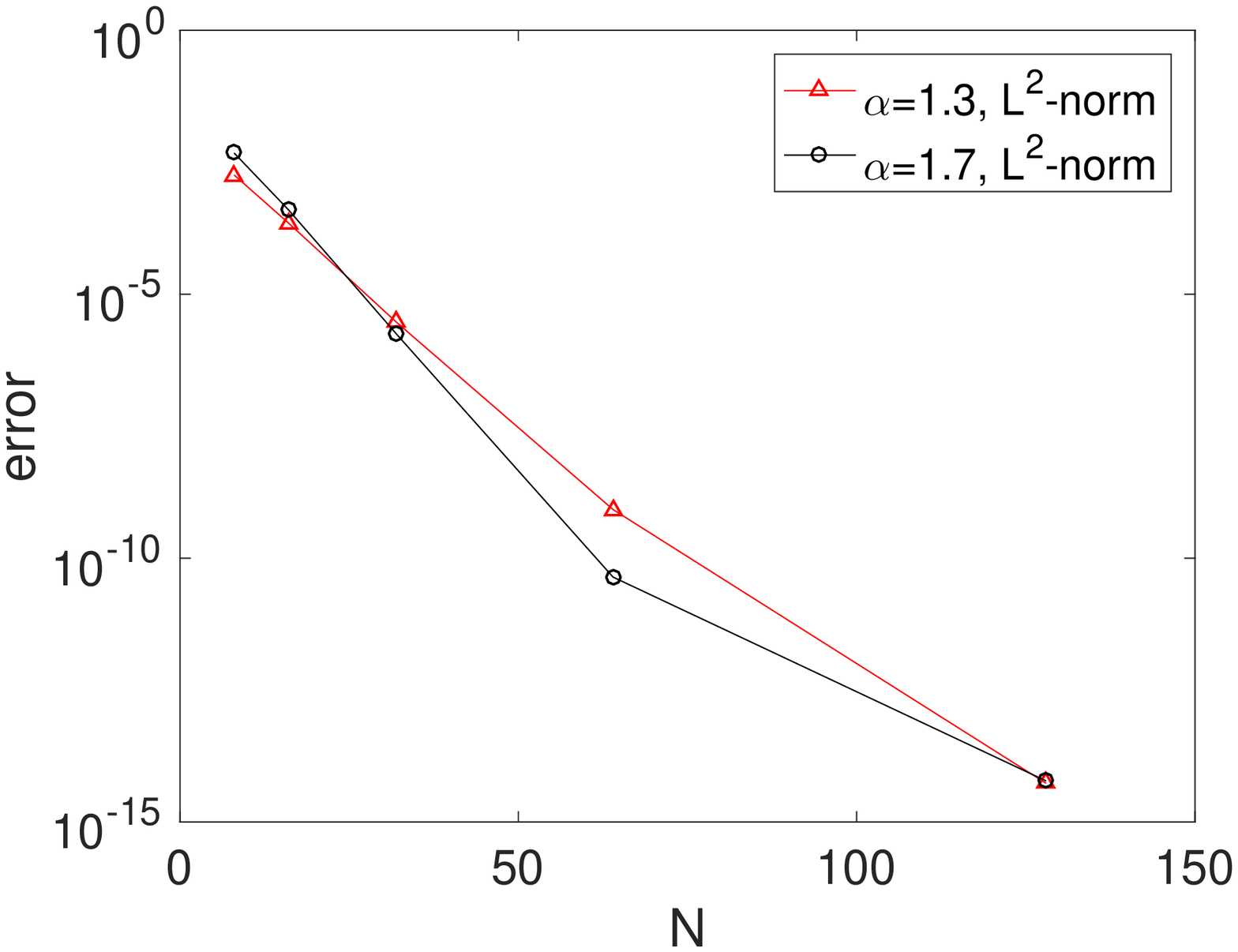}
		\end{minipage}}
		\subfigure[Case II.]{
			\begin{minipage}[t]{0.43\textwidth}
				\centering
				\includegraphics[width=2.5in]{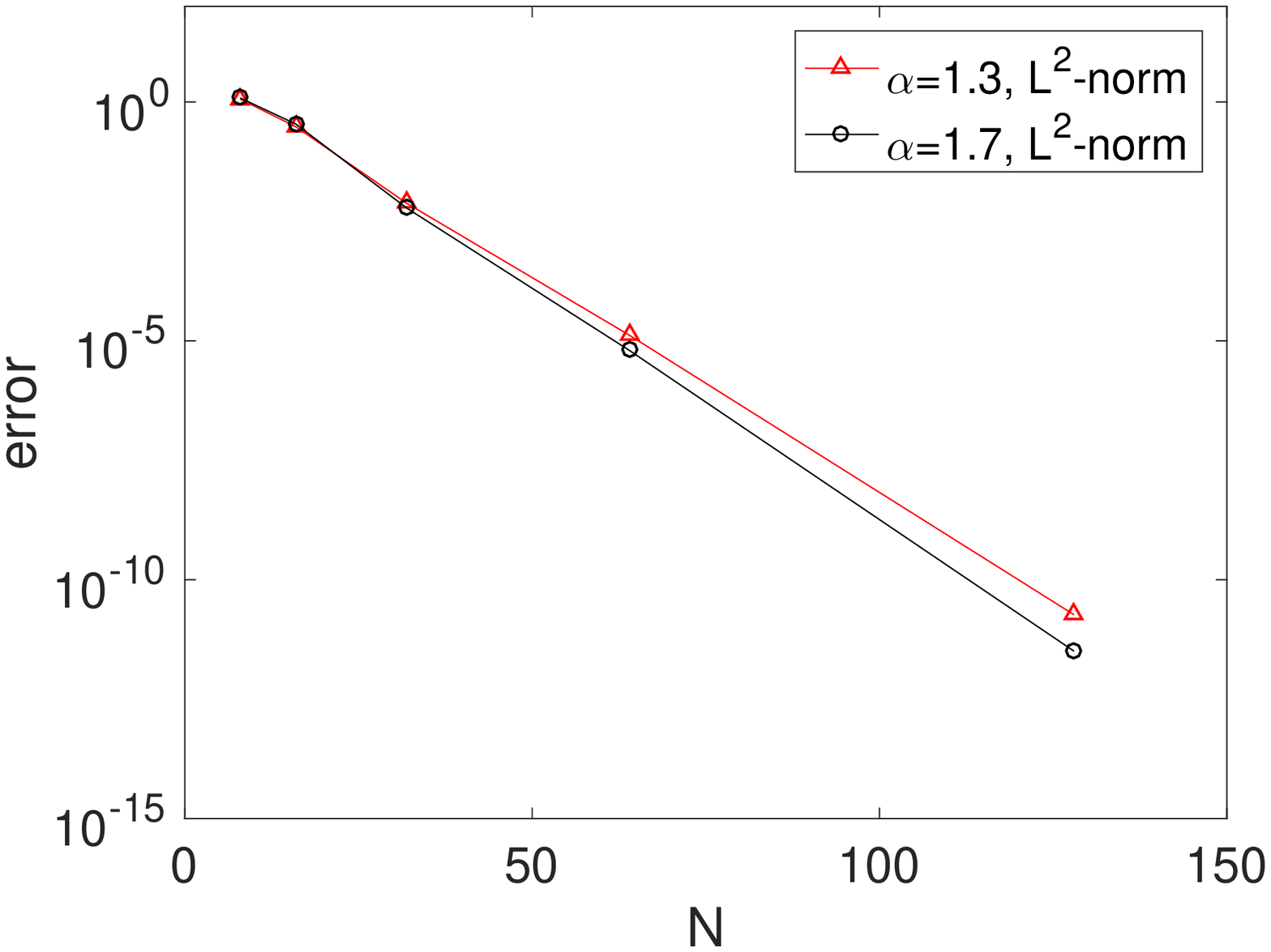}
		\end{minipage}}
		\caption{The $L^2$ error in space for Example \ref{ex4.1}, $\tau=0.01$, $\kappa=1$.}
		\label{fig1}
	\end{figure}
	\begin{table}
		\caption{Case I of Example \ref{ex4.1}: the convergence rate in time for different values of $\kappa$. }
		\label{tab1}
		\centering
		\begin{tabular}{|c|c|c|c|c|c|c|c|}
			\hline
			\multirow{2}{*}{$\alpha$}	&	\multirow{2}{*} {$\tau$} &  \multicolumn{2}{c|}{$\kappa=1$} &  \multicolumn{2}{c|}{$\kappa=3$} &  \multicolumn{2}{c|}{$\kappa=4$} \\ \cline{3-8}& & Error & Order  & Error & Order & Error & Order \\
			\hline
			&1/10 & 5.2105e-3 & -      & 1.3160e-2 & -       & 1.7031e-2 & - \\
			&1/20 & 1.3441e-3 & 1.9548 & 3.4301e-3 & 1.9398  & 4.4529e-3 & 1.9354 \\
			1.3 &1/40 & 3.4169e-4 & 1.9759 & 8.7904e-4 & 1.9643  & 1.1447e-3 & 1.9598 \\
			&1/80 & 8.5820e-5 & 1.9933 & 2.2188e-4 & 1.9862  & 2.8956e-4 & 1.9830 \\
			&1/160& 2.1142e-5 & 2.0212 & 5.4810e-5 & 2.0173  & 7.1619e-5 & 2.0155 \\
			\hline
			&1/10 & 3.1801e-3 & -      & 8.1250e-3 & -       & 1.0553e-2 & - \\
			&1/20 & 8.0715e-4 & 1.9782 & 2.0745e-3 & 1.9696  & 2.6943e-3 & 1.9697 \\
			1.7 &1/40 & 2.0396e-4 & 1.9845 & 5.2881e-4 & 1.9720  & 6.8893e-4 & 1.9675 \\
			&1/80 & 5.1102e-5 & 1.9968 & 1.3329e-4 & 1.9881  & 1.7410e-4 & 1.9844 \\
			&1/160& 1.2576e-5 & 2.0228 & 3.2914e-5 & 2.0178  & 4.3059e-5 & 2.0155 \\
			\hline
		\end{tabular}
	\end{table}
	\begin{example}
		\label{ex4.1}
		The Allen-Cahn equation with a quartic double well potential is a simple nonlinear reaction-diffusion model that arises in the study of formation and motion of phase boundaries \cite{Feng2013,Shen2016}. The space-fractional equation takes the form \cite{Song2016,Lizheng2017}
		\begin{equation}\label{eq4.111}
		{\partial_t}u=-K_{\alpha}(-\Delta)^{\alpha/2}u+u-u^{3},
		\end{equation}
		where $K_{\alpha}$ is a small positive constant.
	\end{example}
	\begin{itemize}
		\item Case I: Choose the following initial condition
		\begin{equation}
		\label{eq4.1}
		u^{0}(x_1,x_2)=\sin{ (2x_1)}\cos{(2x_2)}, \quad (x_1,x_2)\in[0,2\pi]\times [0,2\pi].
		\end{equation}
	\end{itemize}
	
	We choose the values of the parameters $K_{\alpha}=0.01$. For Case I, Table \ref{tab1} presents
	the $L^{2}$-errors for different values of $\kappa$ at $t=2$ when $N=32$. It is obvious that
	second-order accuracy is observed, which is in line with the theoretical analysis. Next, we fix the step size $\tau=0.01$, $\kappa=1$, the $L^{2}$-errors at $t=2$ against $N$ are shown in Fig. \ref{fig1} (a), spectral accuracy is displayed, which agrees with
	the theoretical analysis.
	\begin{itemize}
		\item Case II: Choose the following initial condition
		\begin{equation}
		\label{eq4.2}
		u^{0}(x_1,x_2)=\exp{(-x_1^{2}-x_2^{2})},\quad (x_1,x_2)\in[-20,20]\times [-20,20].
		\end{equation}
	\end{itemize}
	\begin{table}
		\caption{Case II of Example \ref{ex4.1}: the convergence rate in time for different values of $\kappa$.  }
		\label{tab2}
		\centering
		\begin{tabular}{|c|c|c|c|c|c|c|c|}
			\hline
			\multirow{2}{*}{$\alpha$}	&	\multirow{2}{*} {$\tau$} &  \multicolumn{2}{c|}{$\kappa=1$} &  \multicolumn{2}{c|}{$\kappa=3$} &  \multicolumn{2}{c|}{$\kappa=4$} \\ \cline{3-8}& & Error & Order  & Error & Order & Error & Order \\
			\hline
			&1/10 & 1.6244e-2 & -      & 3.6935e-2 & -       & 4.6592e-2 & - \\
			&1/20 & 4.3963e-3 & 1.8856 & 1.0319e-2 & 1.8397  & 1.3196e-2 & 1.8200 \\
			1.3 &1/40 & 1.1374e-3 & 1.9506 & 2.7041e-3 & 1.9321  & 3.4785e-3 & 1.9235 \\
			&1/80 & 2.8767e-4 & 1.9832 & 6.8752e-4 & 1.9757  & 8.8665e-4 & 1.9720 \\
			&1/160& 7.1082e-5 & 2.0168 & 1.7028e-4 & 2.0135  & 2.1984e-4 & 2.0119 \\
			\hline
			&1/10 & 1.6522e-2 & -      & 3.7636e-2 & -       & 4.7481e-2 & - \\
			&1/20 & 4.4665e-3 & 1.8871 & 1.0508e-2 & 1.8406  & 1.3442e-2 & 1.8206 \\
			1.7 &1/40 & 1.1550e-3 & 1.9513 & 2.7525e-3 & 1.9327  & 3.5420e-3 & 1.9241 \\
			&1/80 & 2.9206e-4 & 1.9835 & 6.9973e-4 & 1.9759  & 9.0270e-4 & 1.9723 \\
			&1/160& 7.2160e-5 & 2.0170 & 1.7329e-4 & 2.0136  & 2.2381e-4 & 2.0120 \\
			\hline
		\end{tabular}
	\end{table}

	Considering $K_{\alpha}=0.01$, $N=32$, we present the $L^{2}$-error for different values of $\kappa$ at $t=2$ and
	its convergence order in time in Table \ref{tab2}. Clearly, second-order accuracy
	is still observed for Case II. Fixing $\tau=0.01$, Fig. \ref{fig1} (b) shows spectral accuracy in space for a fixed $\tau=0.01$, $\kappa=1$.
	\begin{figure}[htbp]
		\centering
		\subfigure[t=1000]{
			\begin{minipage}[t]{0.18\textwidth}
				\centering
				\includegraphics[width=1in]{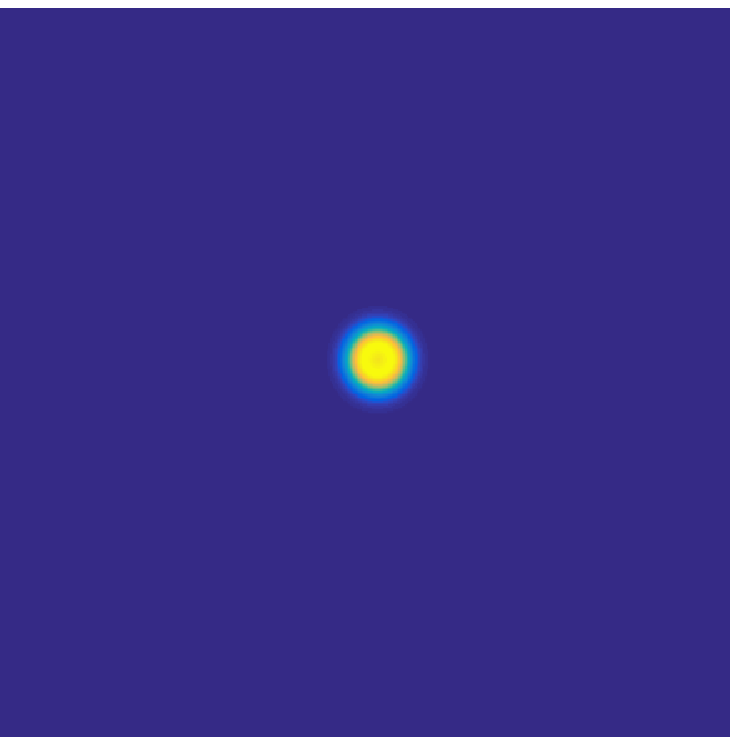}
		\end{minipage}}
		\subfigure[t=6000]{
			\begin{minipage}[t]{0.18\textwidth}
				\centering
				\includegraphics[width=1in]{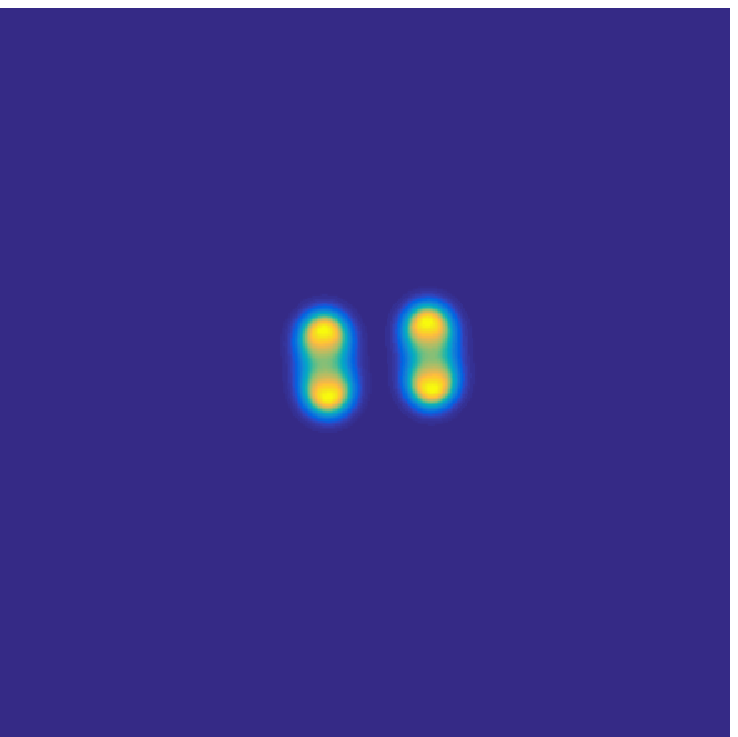}
		\end{minipage}}
		\subfigure[t=6800]{
			\begin{minipage}[t]{0.18\textwidth}
				\centering
				\includegraphics[width=1in]{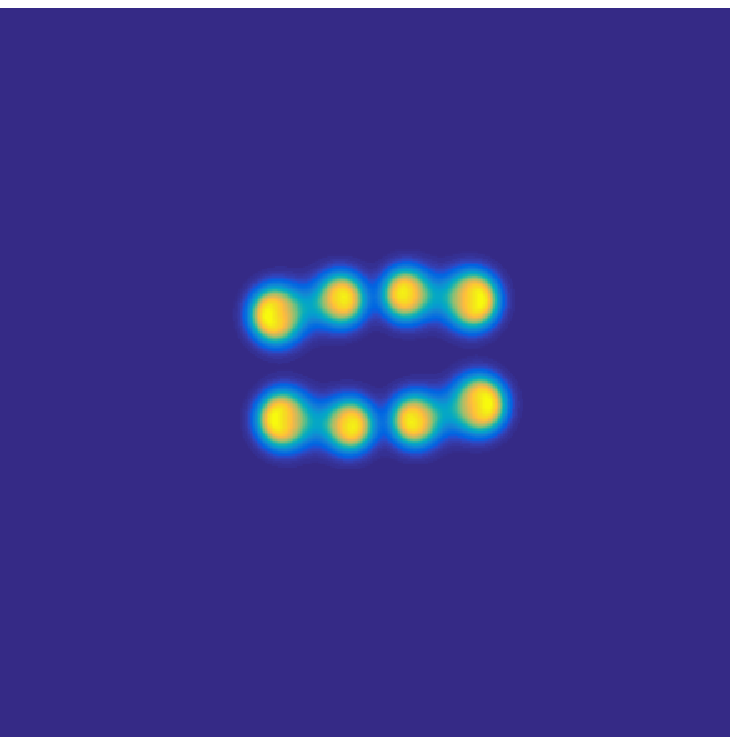}
		\end{minipage}}
		\subfigure[t=9400]{
			\begin{minipage}[t]{0.18\textwidth}
				\centering
				\includegraphics[width=1in]{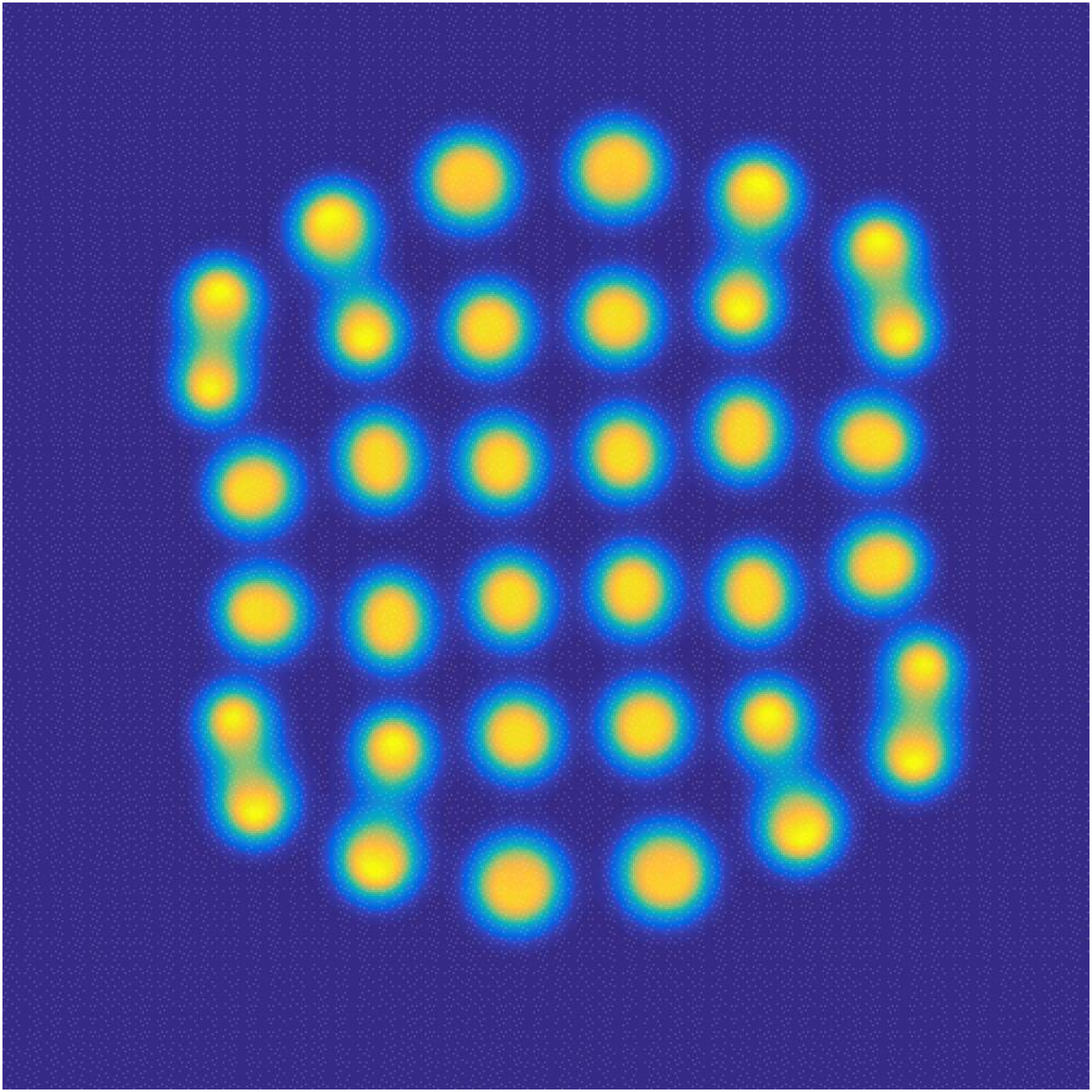}
		\end{minipage}}
		\subfigure[t=30000]{
			\begin{minipage}[t]{0.18\textwidth}
				\centering
				\includegraphics[width=1in]{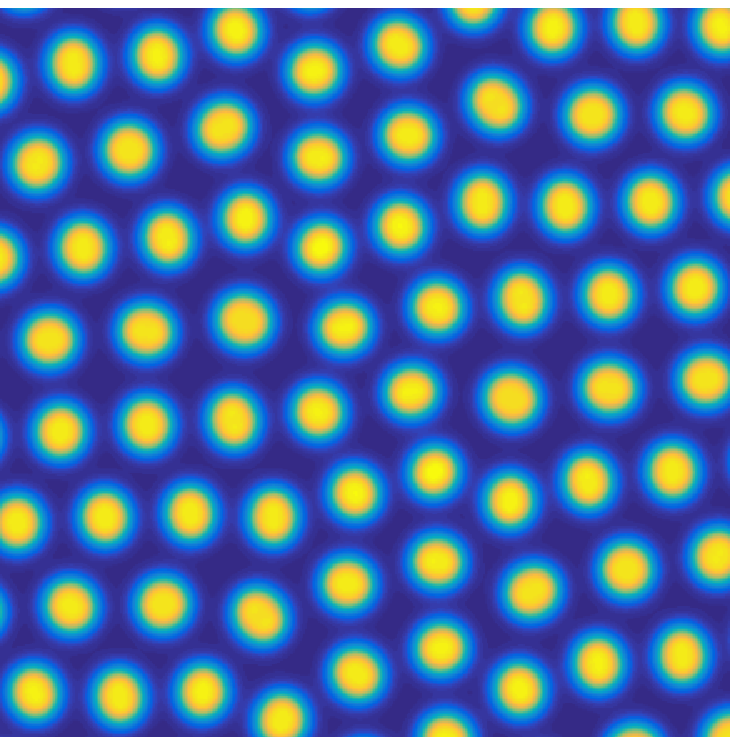}
		\end{minipage}}
		\subfigure[t=200]{
			\begin{minipage}[t]{0.18\textwidth}
				\centering
				\includegraphics[width=1in]{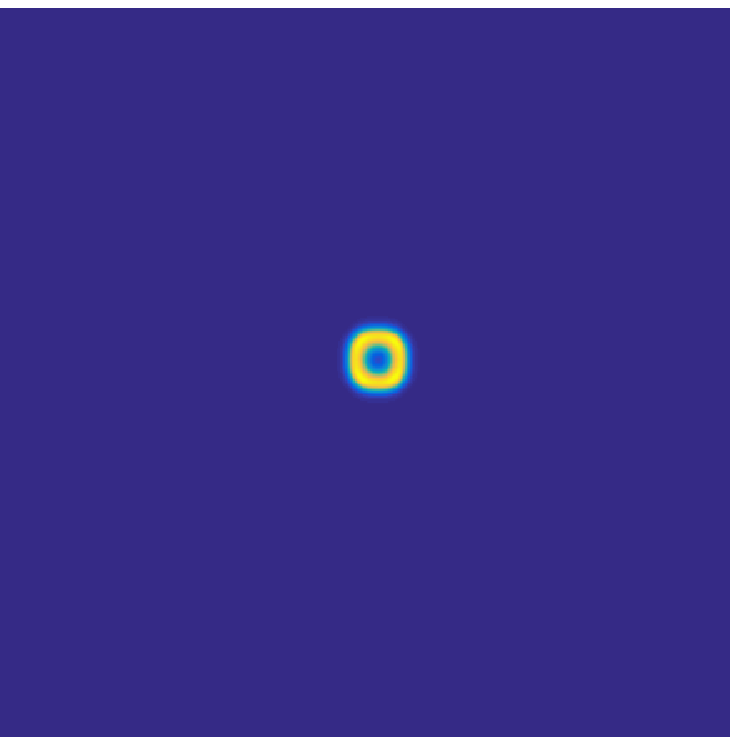}
		\end{minipage}}
		\subfigure[t=2000]{
			\begin{minipage}[t]{0.18\textwidth}
				\centering
				\includegraphics[width=1in]{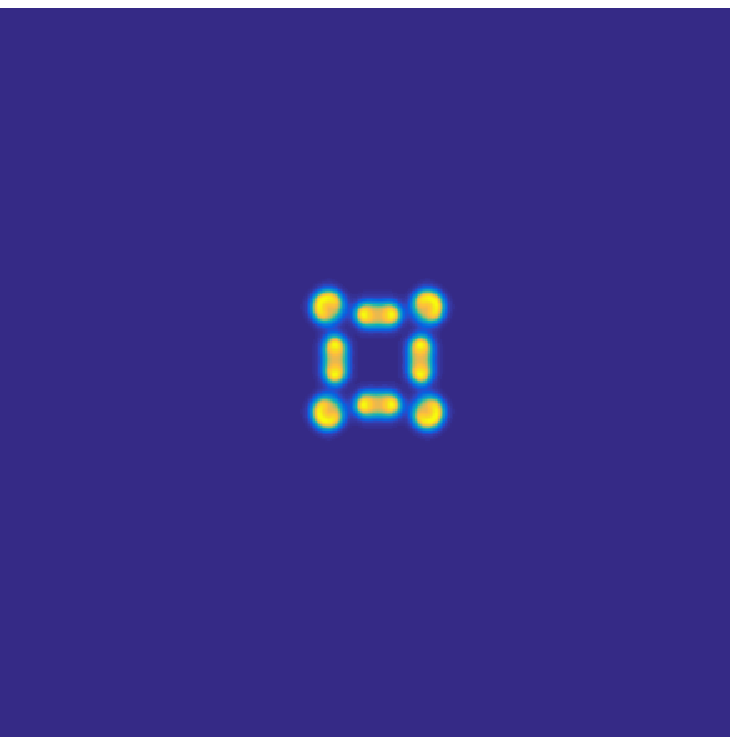}
		\end{minipage}}
		\subfigure[t=3400]{
			\begin{minipage}[t]{0.18\textwidth}
				\centering
				\includegraphics[width=1in]{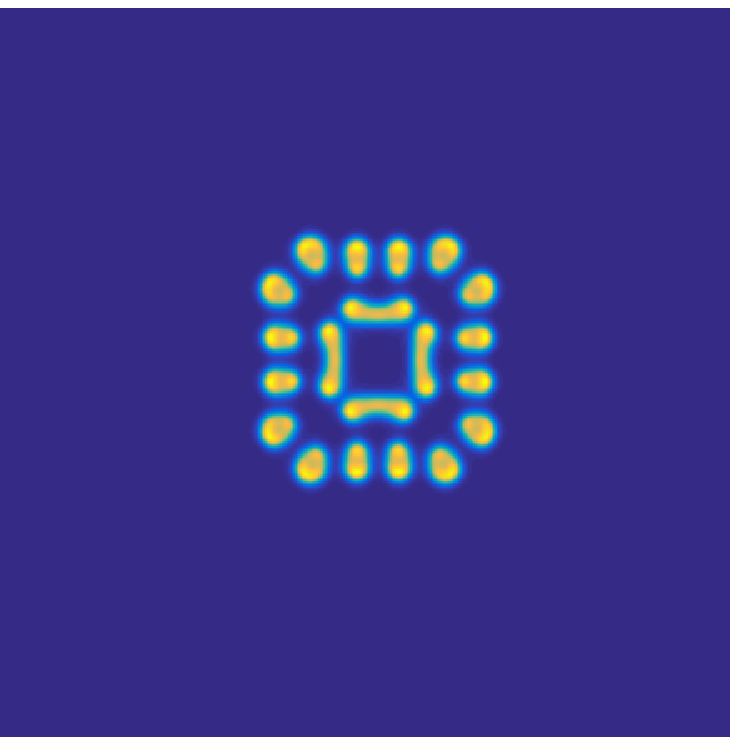}
		\end{minipage}}
		\subfigure[t=6000]{
			\begin{minipage}[t]{0.18\textwidth}
				\centering
				\includegraphics[width=1in]{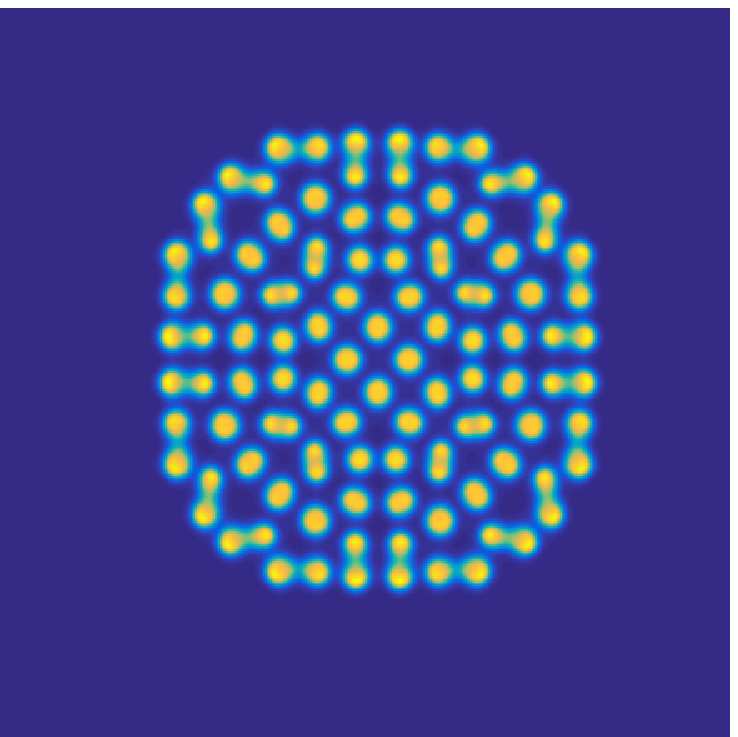}
		\end{minipage}}
		\subfigure[t=30000]{
			\begin{minipage}[t]{0.18\textwidth}
				\centering
				\includegraphics[width=1in]{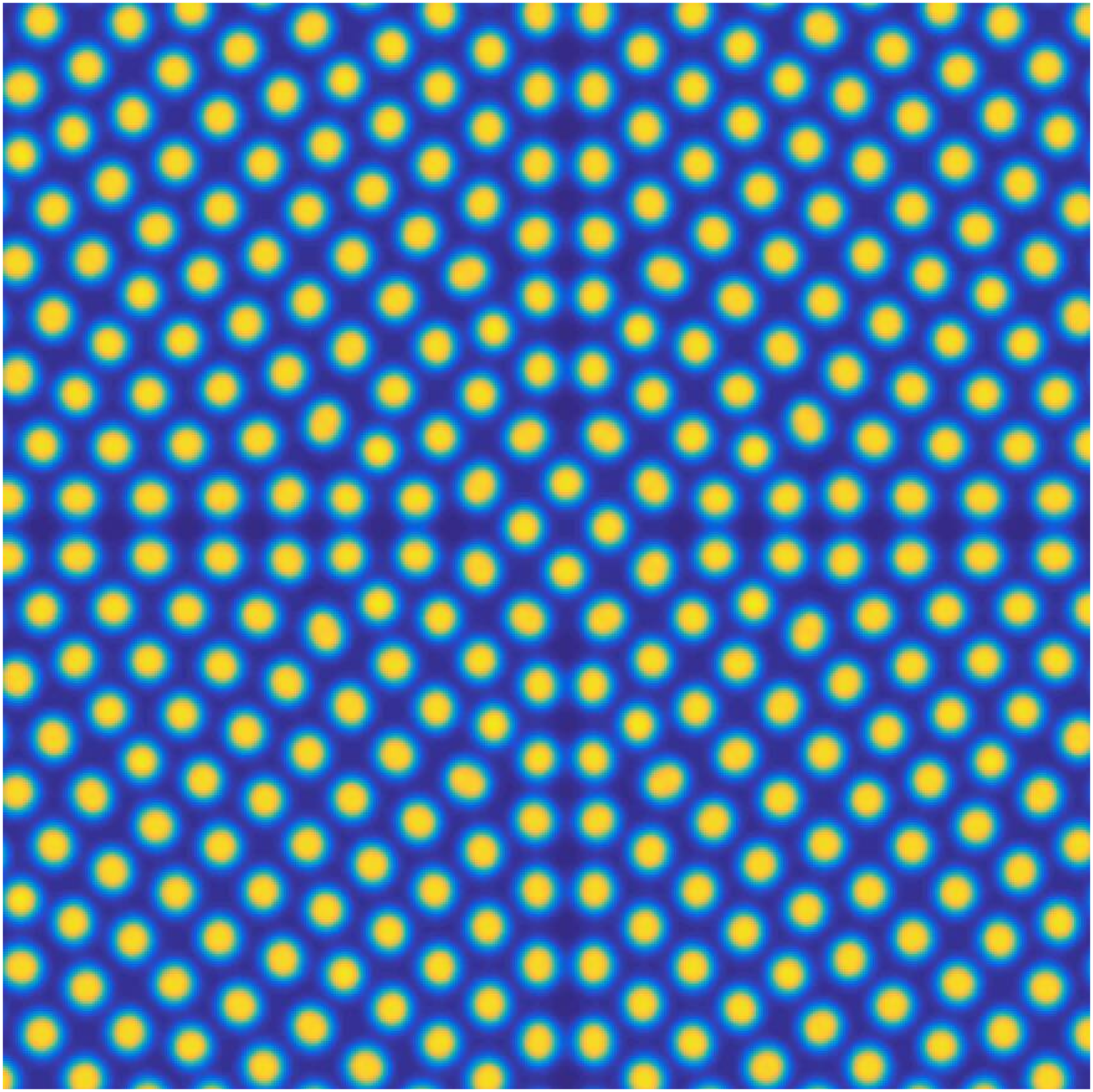}
		\end{minipage}}
		\subfigure[t=2000]{
			\begin{minipage}[t]{0.18\textwidth}
				\centering
				\includegraphics[width=1in]{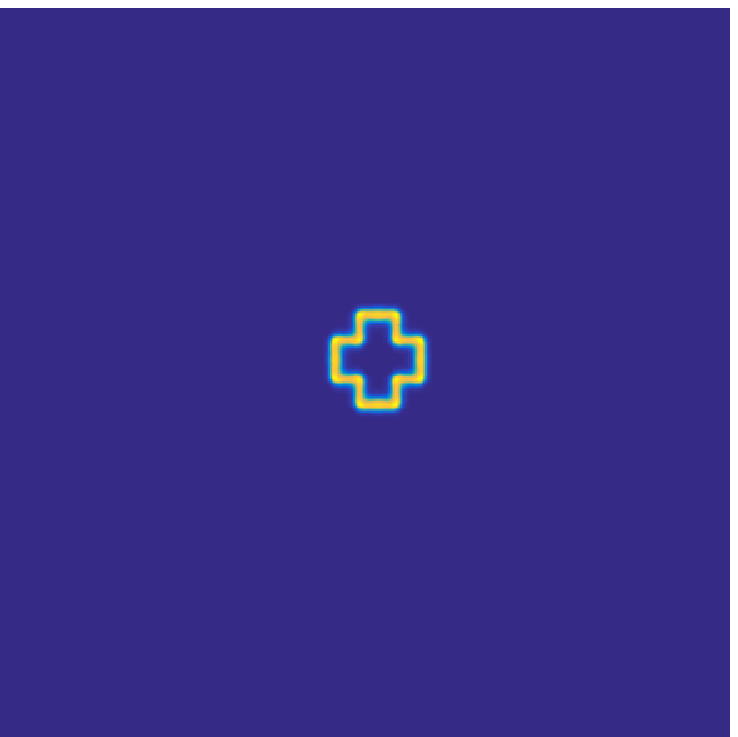}
		\end{minipage}}
		\subfigure[t=3400]{
			\begin{minipage}[t]{0.18\textwidth}
				\centering
				\includegraphics[width=1in]{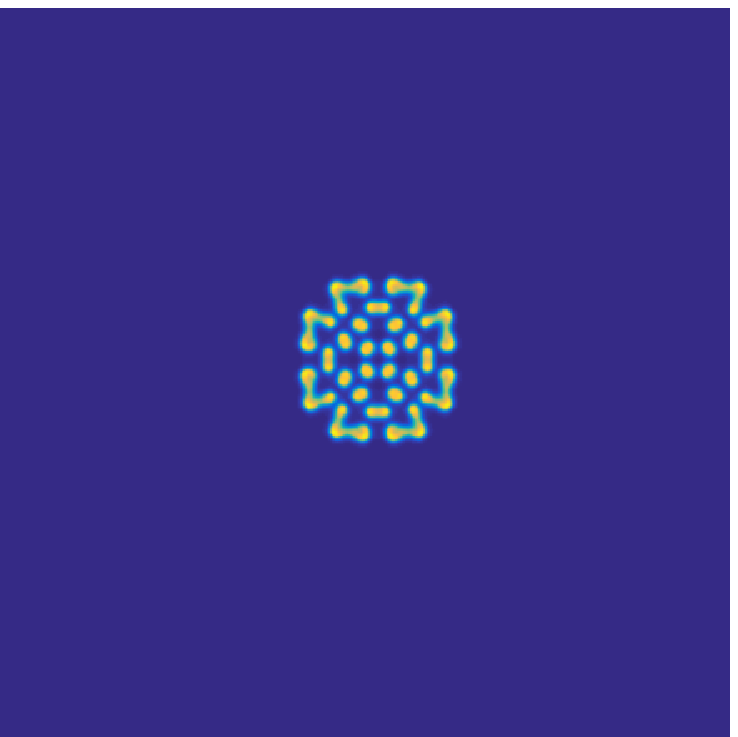}
		\end{minipage}}
		\subfigure[t=5000]{
			\begin{minipage}[t]{0.18\textwidth}
				\centering
				\includegraphics[width=1in]{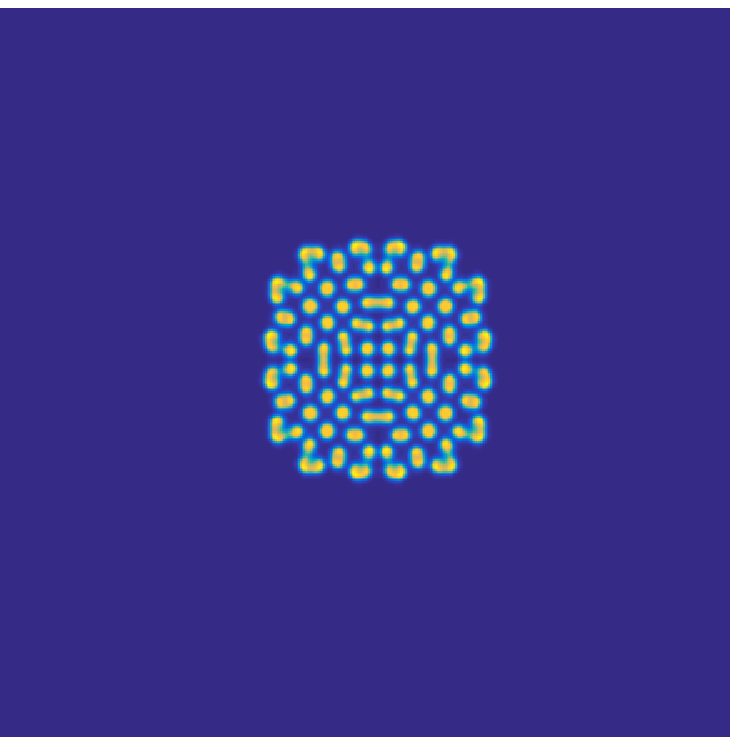}
		\end{minipage}}
		\subfigure[t=9000]{
			\begin{minipage}[t]{0.18\textwidth}
				\centering
				\includegraphics[width=1in]{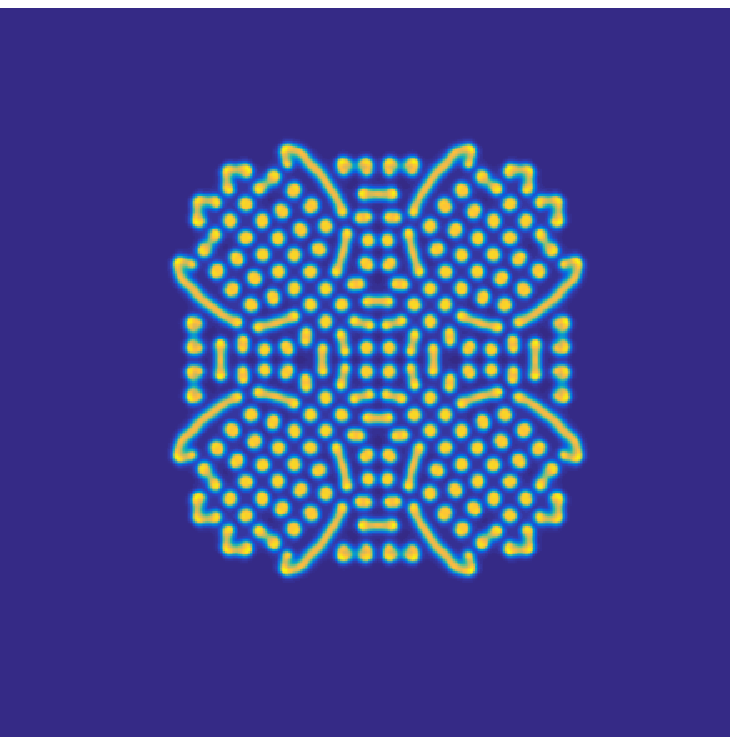}
		\end{minipage}}
		\subfigure[t=30000]{
			\begin{minipage}[t]{0.18\textwidth}
				\centering
				\includegraphics[width=1in]{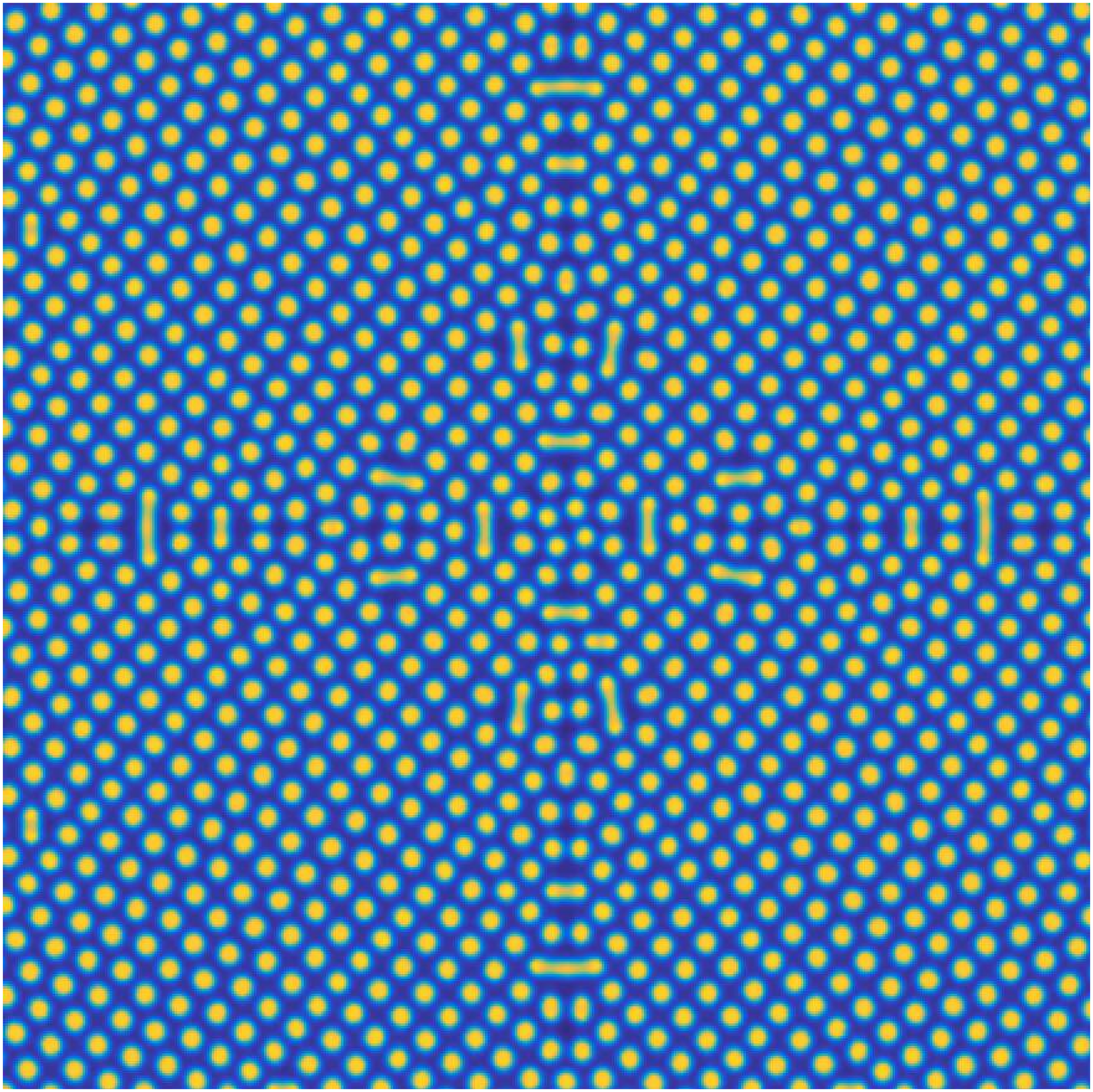}
		\end{minipage}}
		\caption{Example \ref{ex4.2}: Evolution of the solution $v$ with $\lambda= 0.063$: (a)-(e)  $\alpha=2$; (f)-(j) $\alpha=1.7$; (k)-(o) $\alpha=1.5$, the initial conditions are given by equation \eqref{eq4.7}.}
		\label{fig3a}
	\end{figure}
	\begin{example}[Fractional Gray--Scott model]\label{ex4.2}
		For the fractional Gray--Scott model (see \eqref{eq4.9} and \eqref{eq4.0}), $u$ and $v$ are the concentrations of $U$ and $V$ (see \eqref{eq4.11}), respectively, $K_{u}$ and $K_{v}$ are the diffusion rates in the process, $F$ is the dimensionless feed rate, and $\lambda$ is the dimensionless rate constant of the second reaction. In order to illustrate the effect of varying $\alpha$ in the model, we take the spatially initial condition as
		\begin{equation}\begin{aligned}
		\label{eq4.7}
		(u,v)=\left\{ \begin{array}{ll}
		(1,0)\quad (x_1,x_2)\in \Omega\setminus O_{c},\\
		(\frac{1}{2},\frac{1}{4})\quad (x_1,x_2)\in O_{c},
		\end{array}\right.
		\end{aligned}\end{equation}where $\Omega=(-1,2)^{2}$ and $O_{c}=\{(x_1,x_2)|(x_1-0.5)^{2}+(x_2-0.5)^{2}\leq0.04^{2}\}$.
	\end{example}
	\begin{figure}[htbp]
		\centering
		\subfigure[t=1000]{
			\begin{minipage}[t]{0.18\textwidth}
				\centering
				\includegraphics[width=1in]{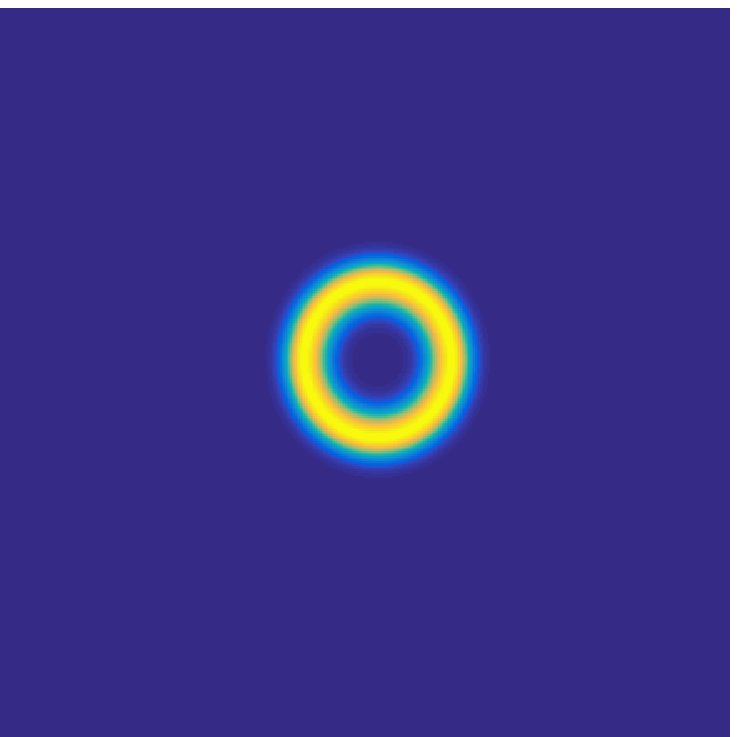}
		\end{minipage}}
		\subfigure[t=6000]{
			\begin{minipage}[t]{0.18\textwidth}
				\centering
				\includegraphics[width=1in]{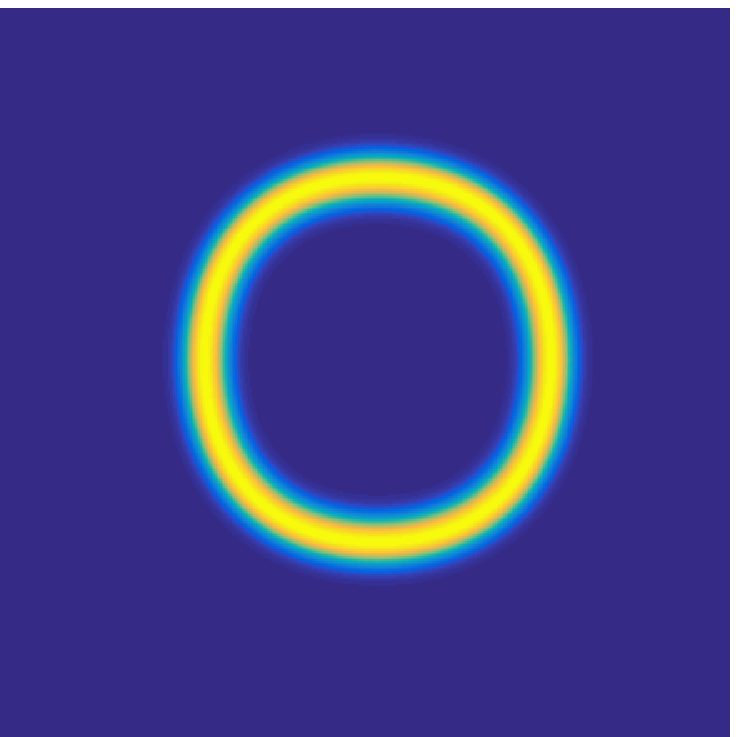}
		\end{minipage}}
		\subfigure[t=9000]{
			\begin{minipage}[t]{0.18\textwidth}
				\centering
				\includegraphics[width=1in]{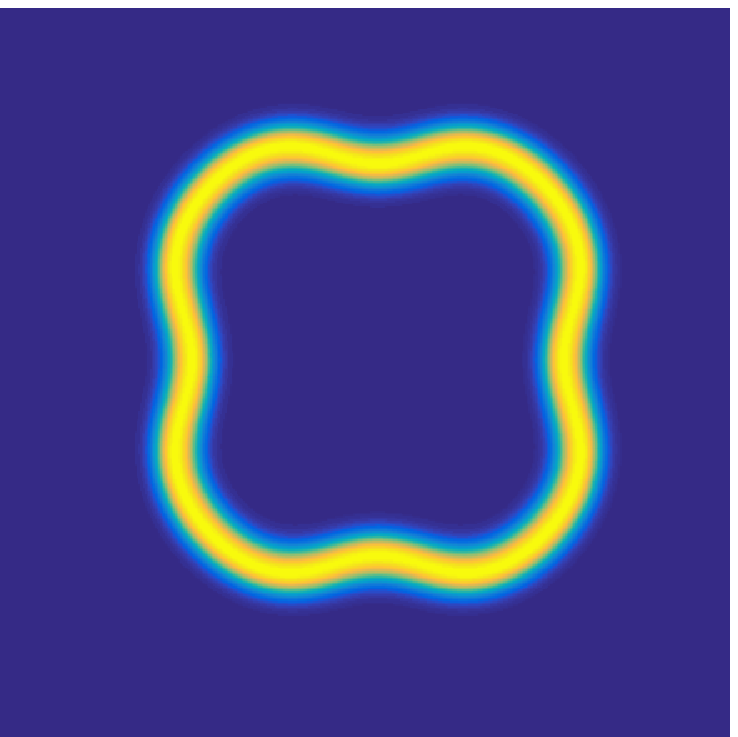}
		\end{minipage}}
		\subfigure[t=15000]{
			\begin{minipage}[t]{0.18\textwidth}
				\centering
				\includegraphics[width=1in]{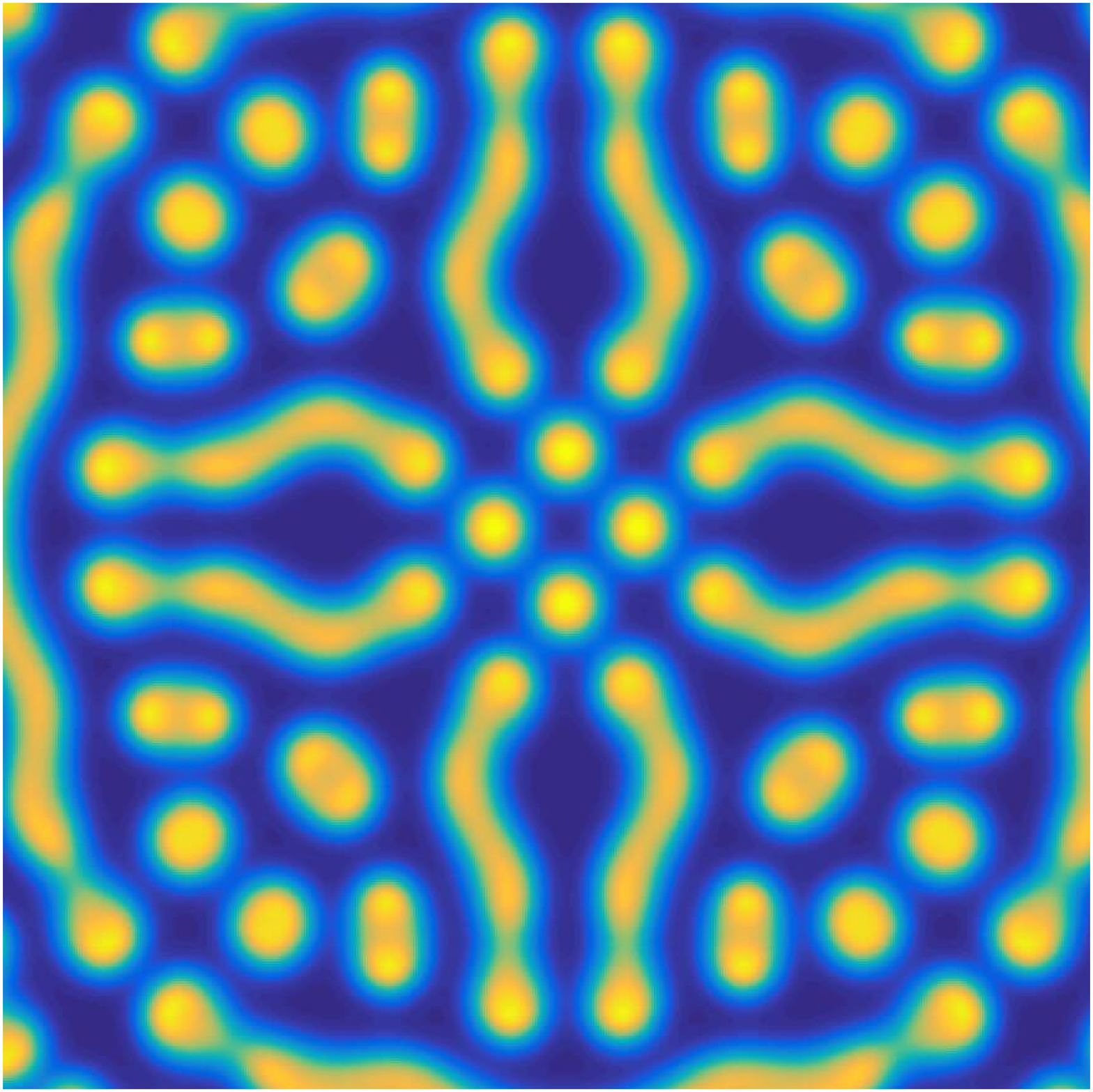}
		\end{minipage}}
		\subfigure[t=30000]{
			\begin{minipage}[t]{0.18\textwidth}
				\centering
				\includegraphics[width=1in]{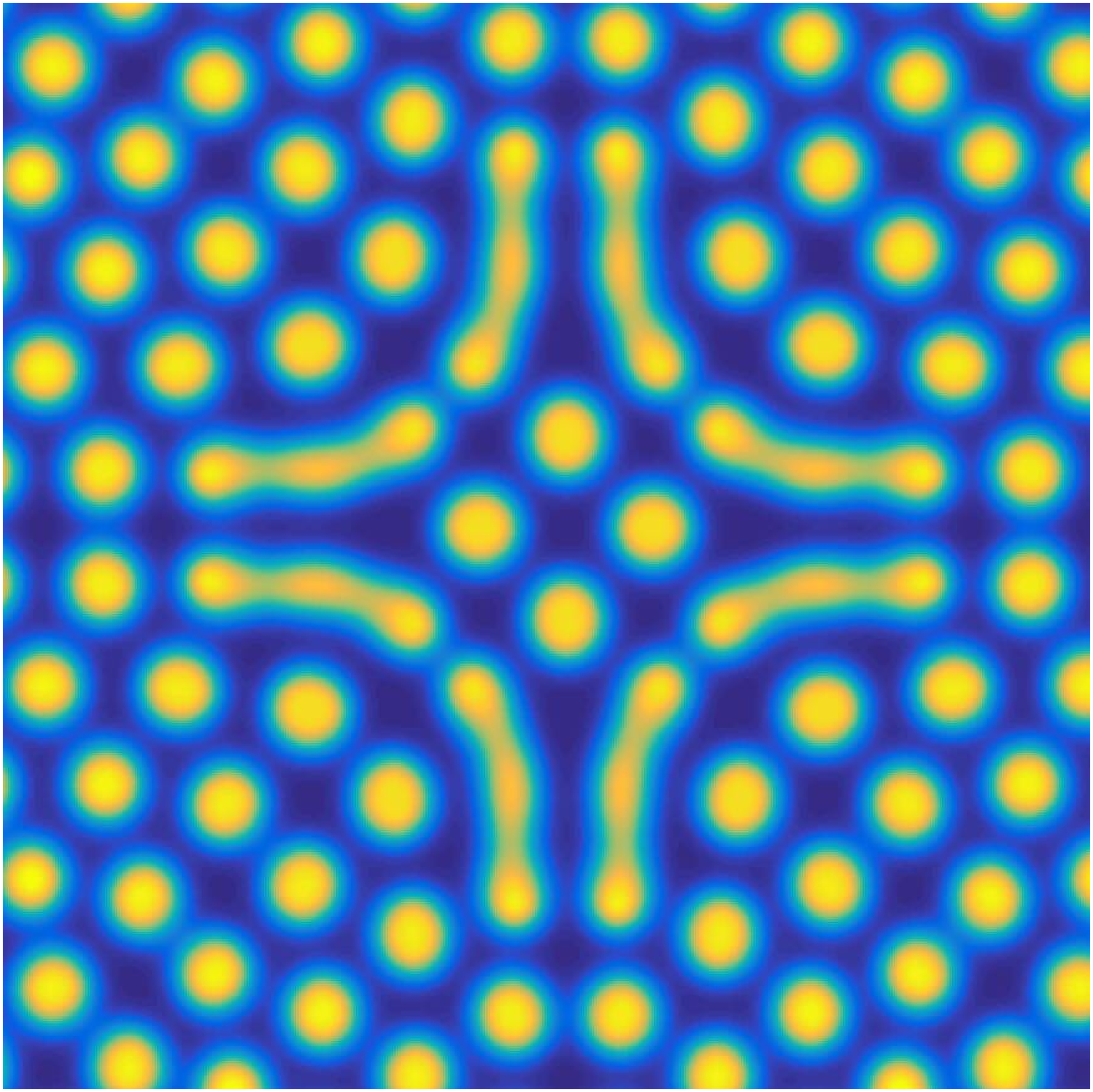}
		\end{minipage}}
		\subfigure[t=2000]{
			\begin{minipage}[t]{0.18\textwidth}
				\centering
				\includegraphics[width=1in]{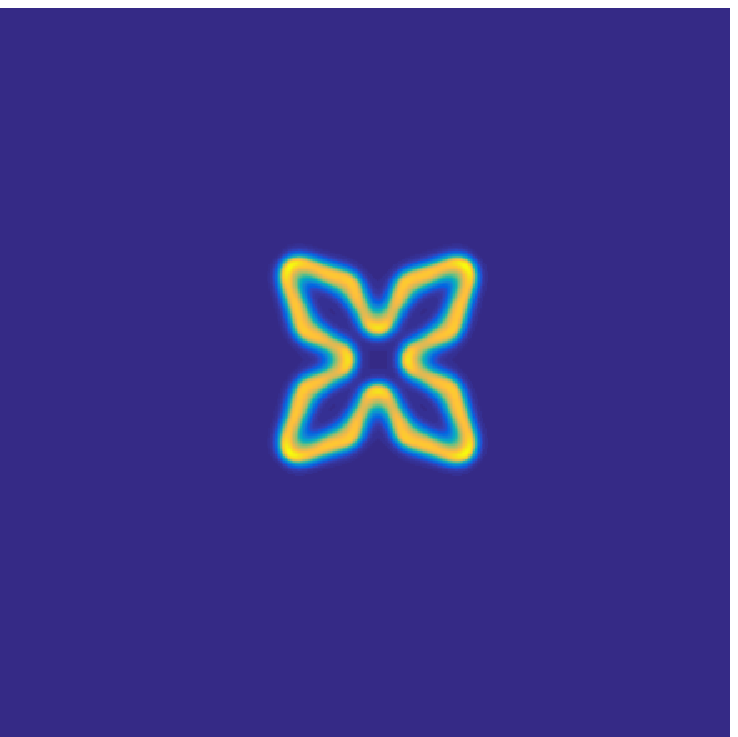}
		\end{minipage}}
		\subfigure[t=3400]{
			\begin{minipage}[t]{0.18\textwidth}
				\centering
				\includegraphics[width=1in]{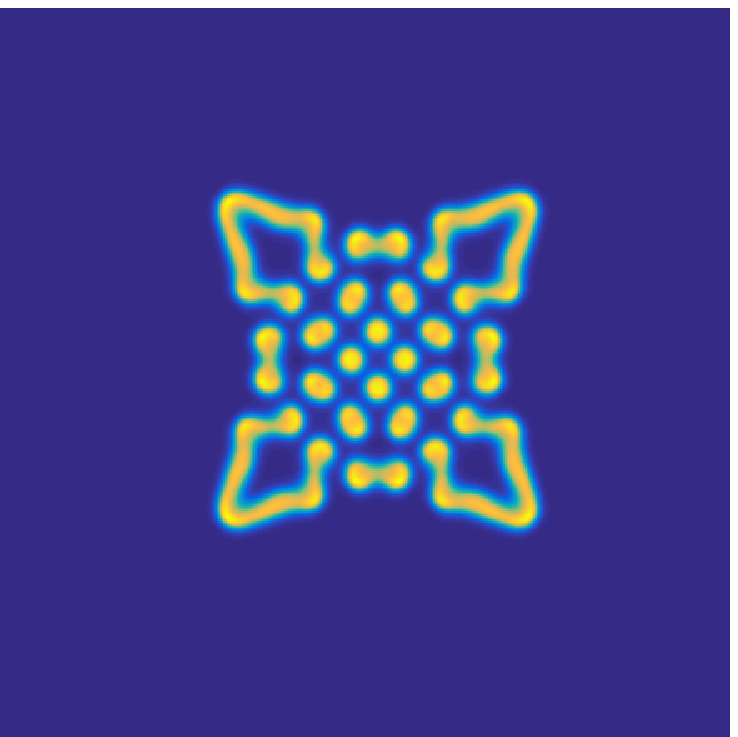}
		\end{minipage}}
		\subfigure[t=6000]{
			\begin{minipage}[t]{0.18\textwidth}
				\centering
				\includegraphics[width=1in]{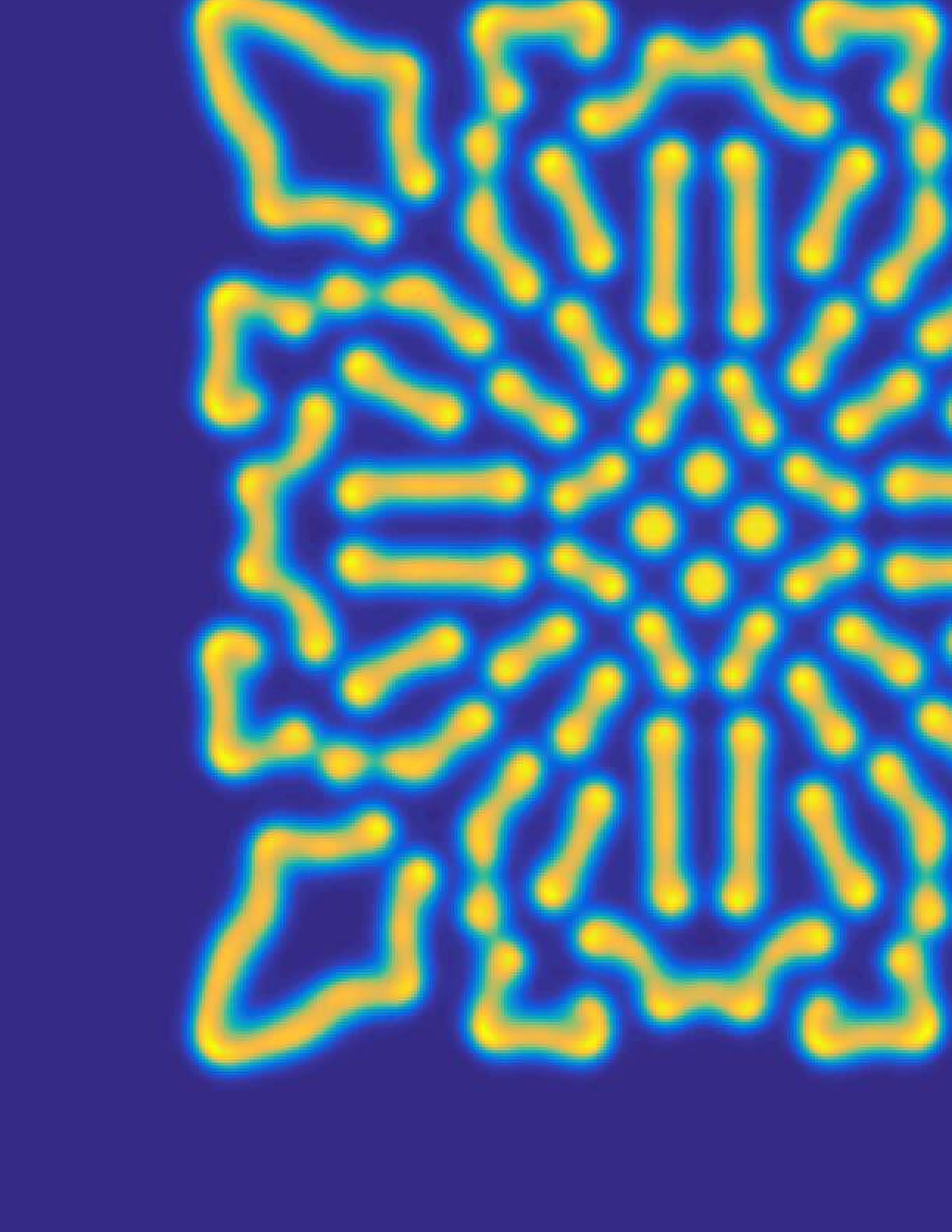}
		\end{minipage}}
		\subfigure[t=9000]{
			\begin{minipage}[t]{0.18\textwidth}
				\centering
				\includegraphics[width=1in]{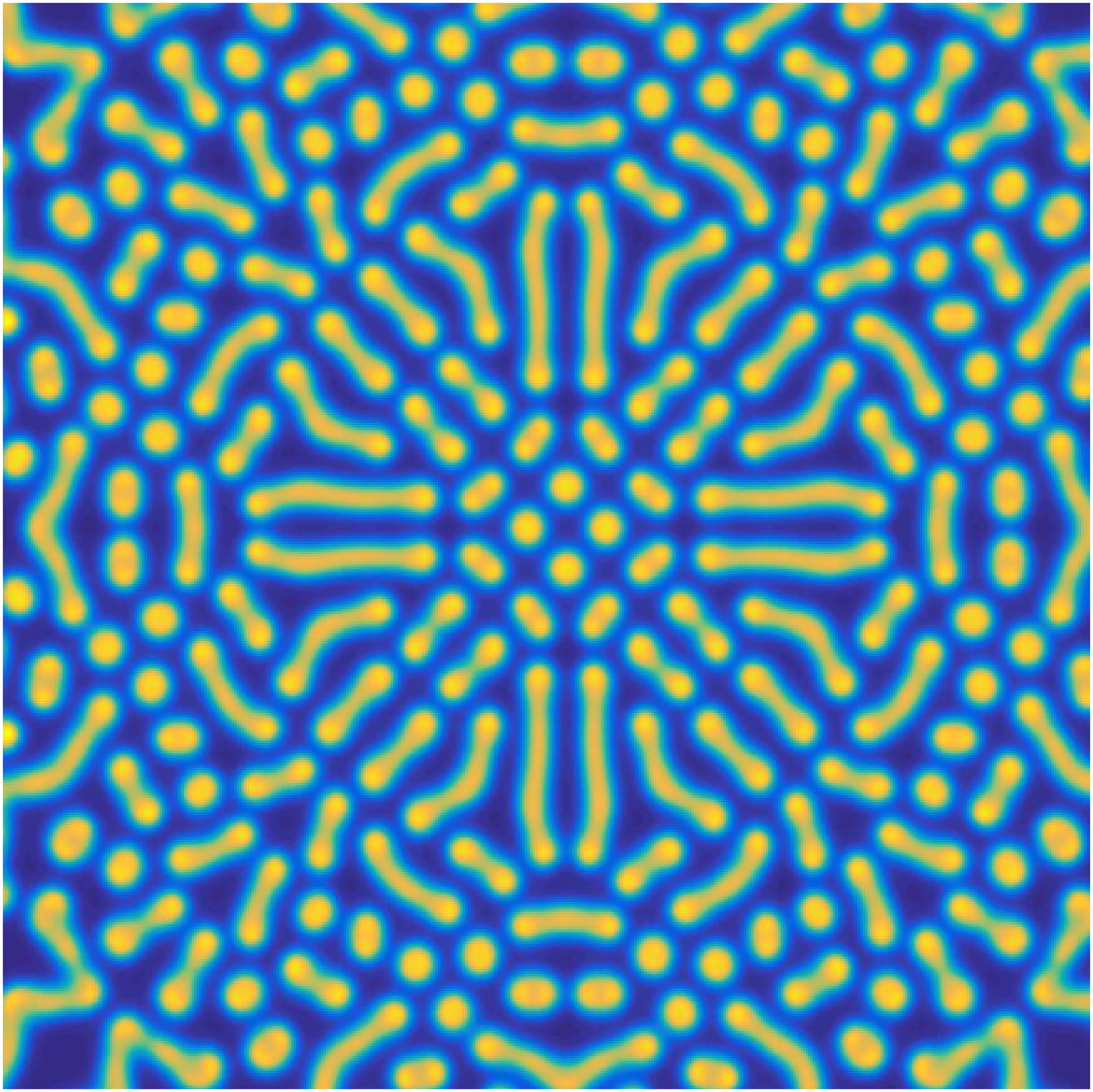}
		\end{minipage}}
		\subfigure[t=30000]{
			\begin{minipage}[t]{0.18\textwidth}
				\centering
				\includegraphics[width=1in]{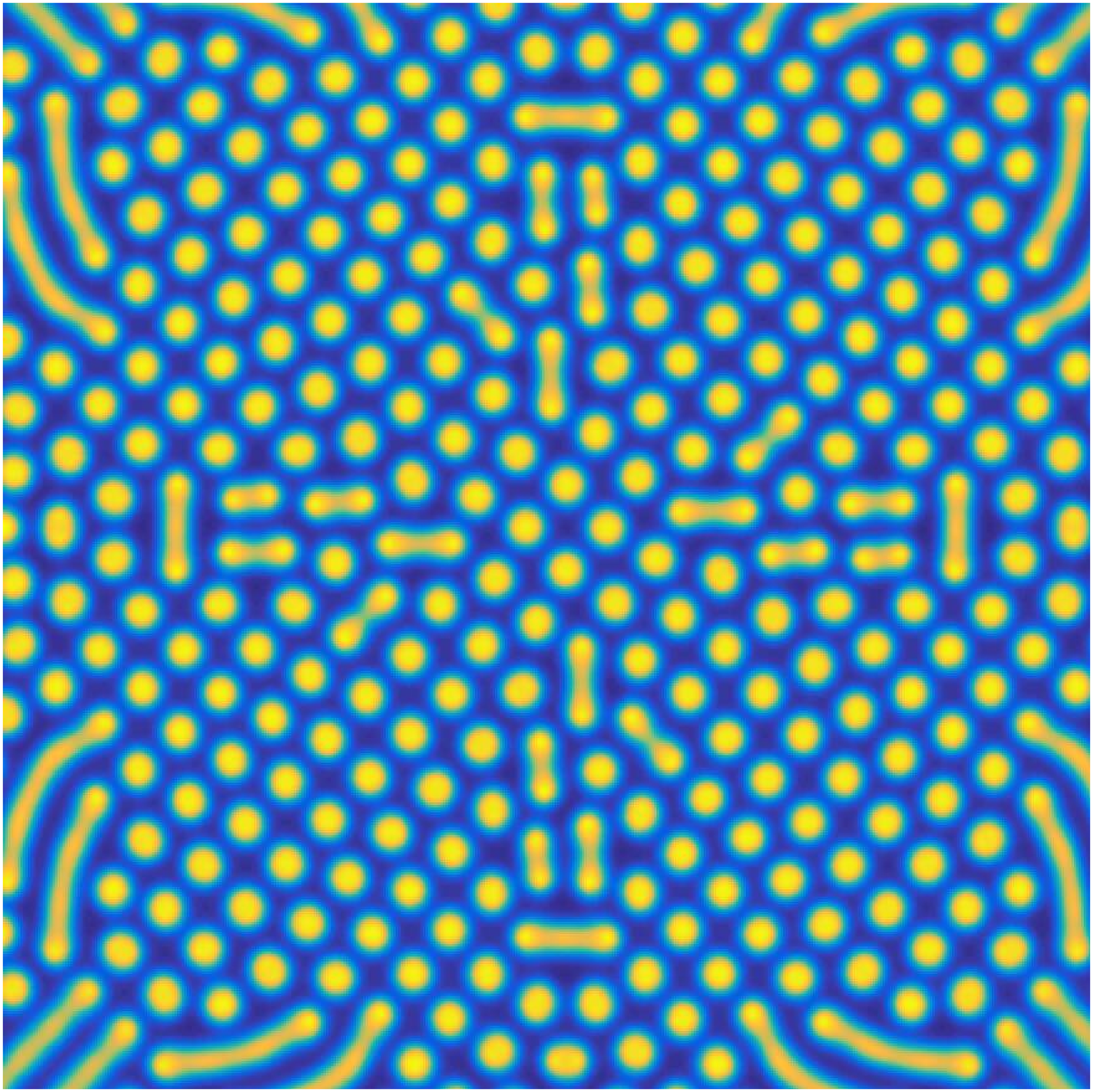}
		\end{minipage}}
		\subfigure[t=1000]{
			\begin{minipage}[t]{0.18\textwidth}
				\centering
				\includegraphics[width=1in]{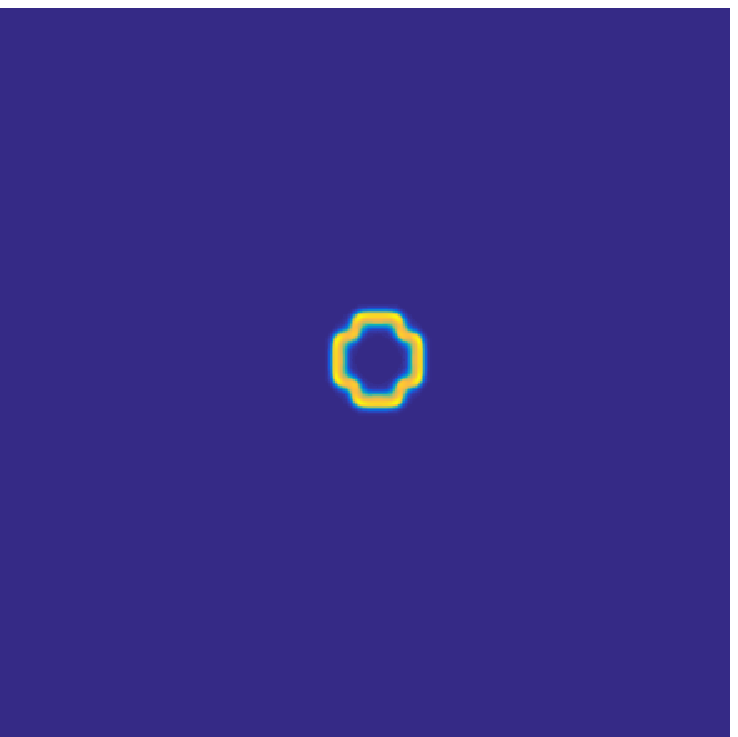}
		\end{minipage}}
		\subfigure[t=3000]{
			\begin{minipage}[t]{0.18\textwidth}
				\centering
				\includegraphics[width=1in]{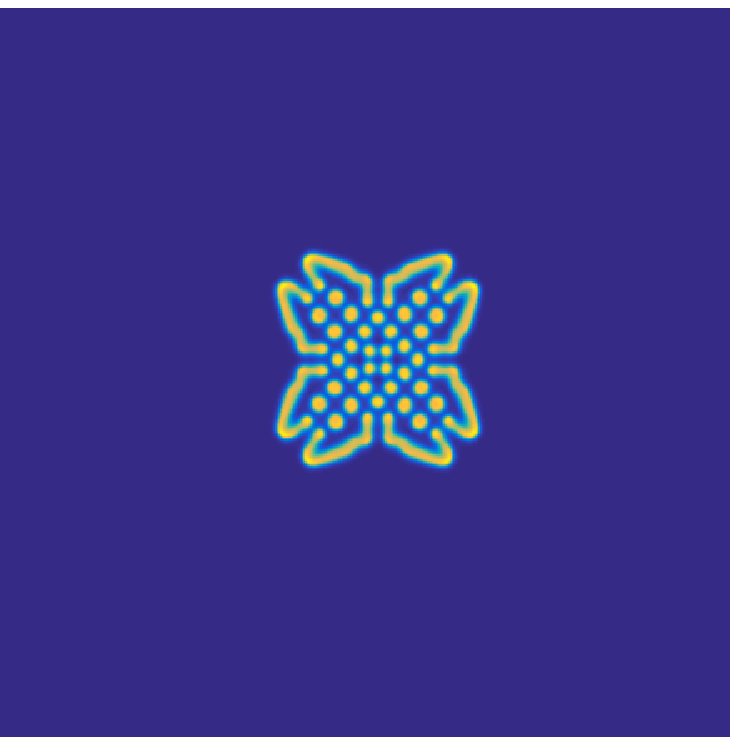}
		\end{minipage}}
		\subfigure[t=5000]{
			\begin{minipage}[t]{0.18\textwidth}
				\centering
				\includegraphics[width=1in]{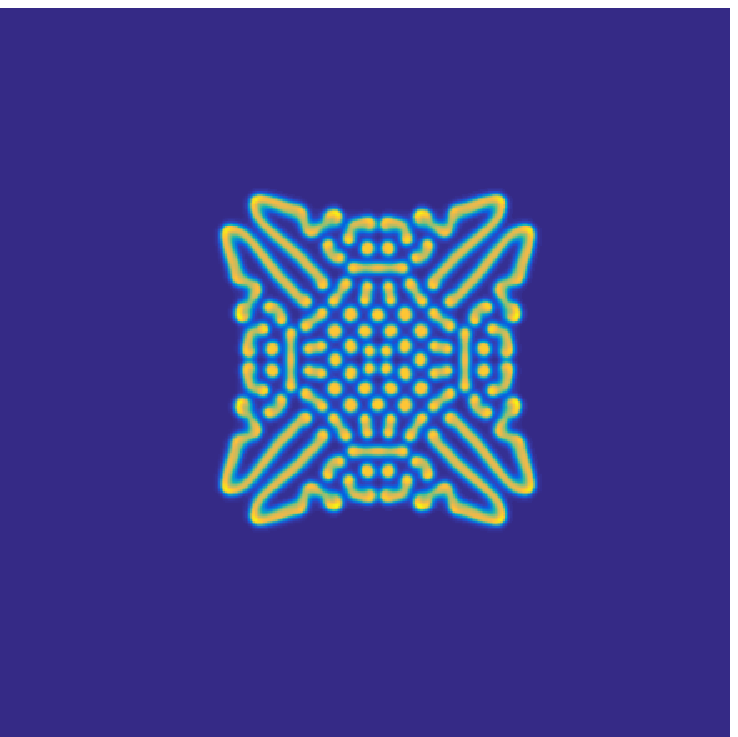}
		\end{minipage}}
		\subfigure[t=9000]{
			\begin{minipage}[t]{0.18\textwidth}
				\centering
				\includegraphics[width=1in]{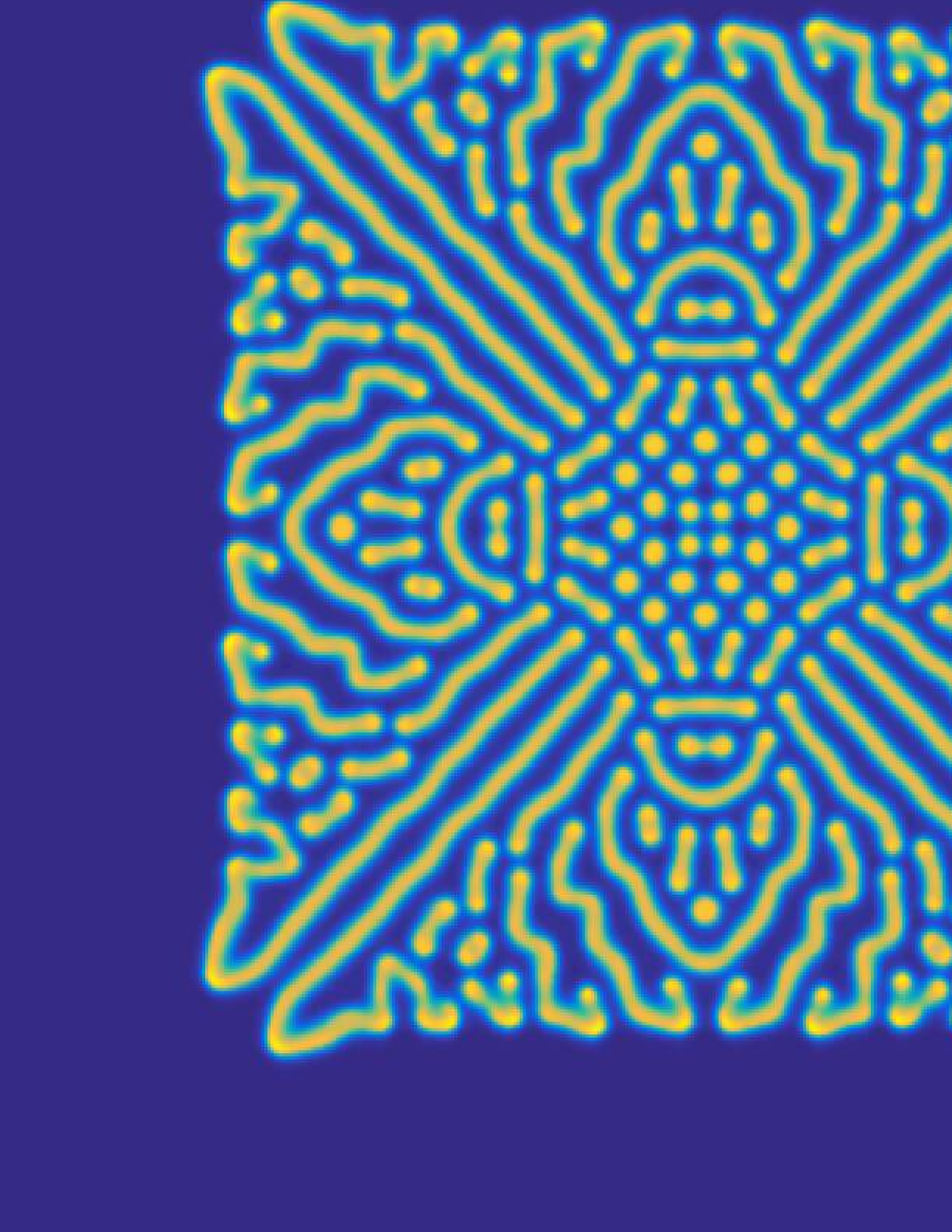}
		\end{minipage}}
		\subfigure[t=30000]{
			\begin{minipage}[t]{0.18\textwidth}
				\centering
				\includegraphics[width=1in]{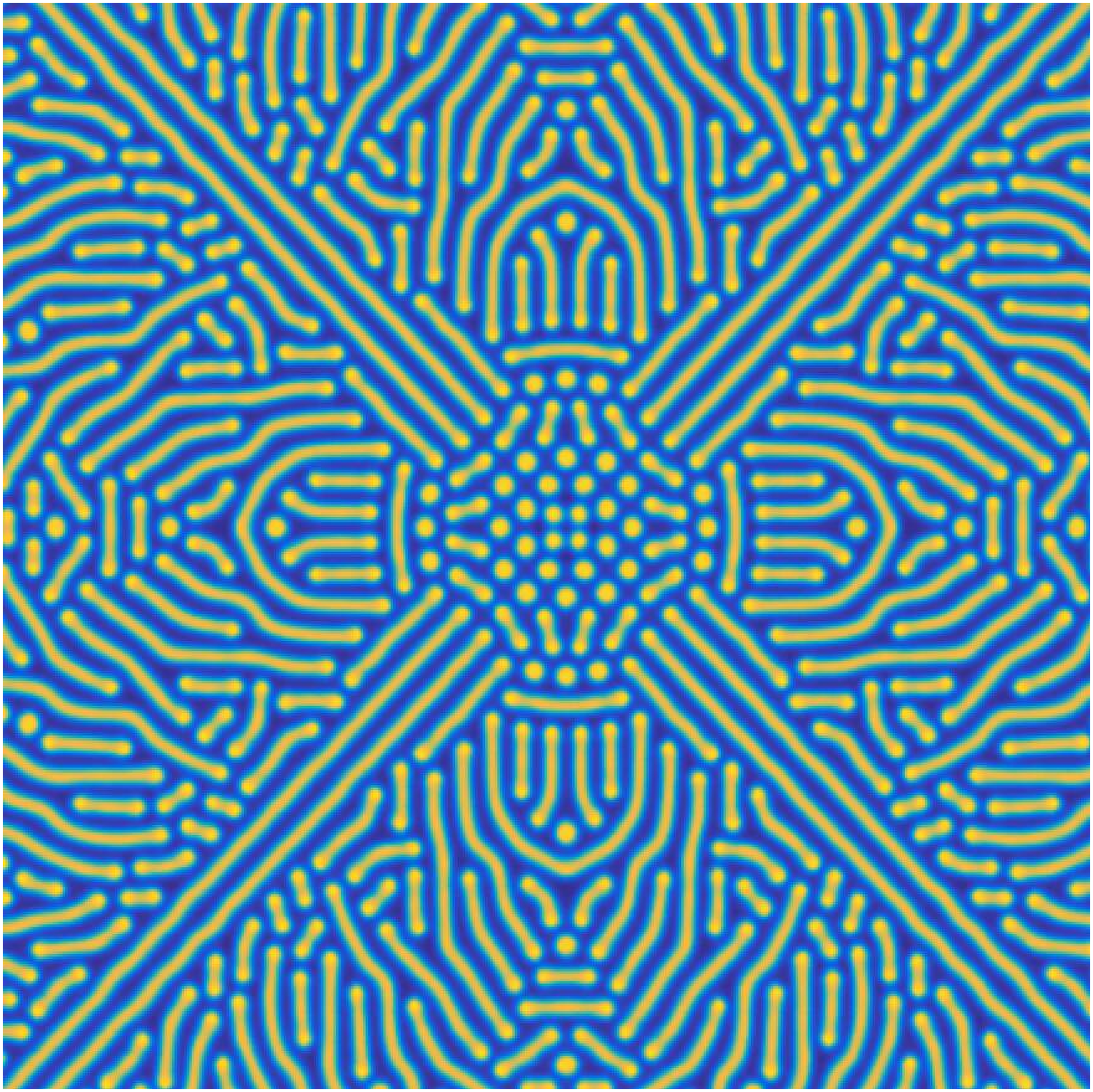}
		\end{minipage}}
		\caption{Example \ref{ex4.2}: Evolution of the solution $v$ with $\lambda= 0.061$: (a)-(e) $\alpha=2$; (f)-(j)  $\alpha=1.7$; (k)-(o) $\alpha=1.5$, the initial conditions are presented as equation \eqref{eq4.7}.}
		\label{fig3b}
	\end{figure}
	\begin{figure}[htbp]
		\centering
		\subfigure[t=200]{
			\begin{minipage}[t]{0.18\textwidth}
				\centering
				\includegraphics[width=1in]{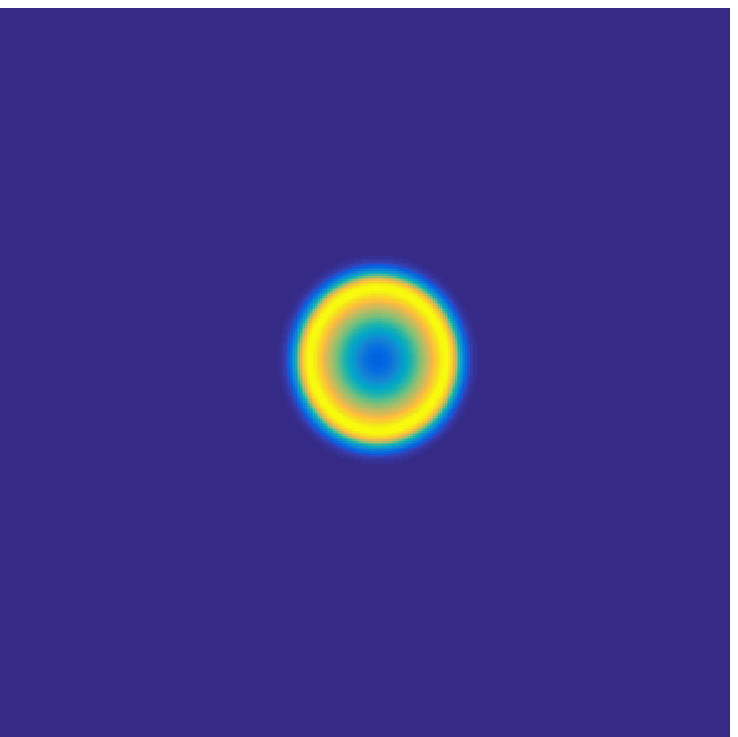}
		\end{minipage}}
		\subfigure[t=800]{
			\begin{minipage}[t]{0.18\textwidth}
				\centering
				\includegraphics[width=1in]{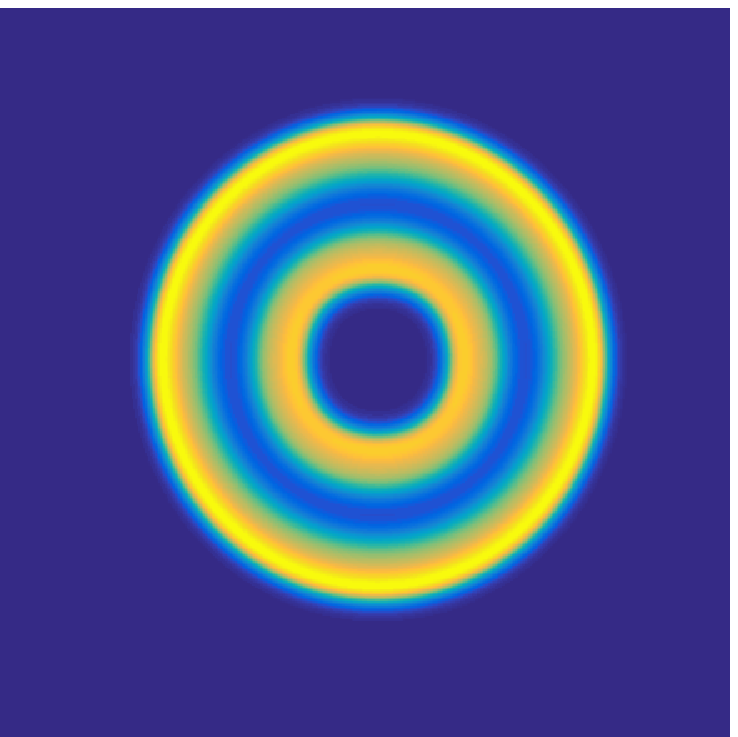}
		\end{minipage}}
		\subfigure[t=2000]{
			\begin{minipage}[t]{0.18\textwidth}
				\centering
				\includegraphics[width=1in]{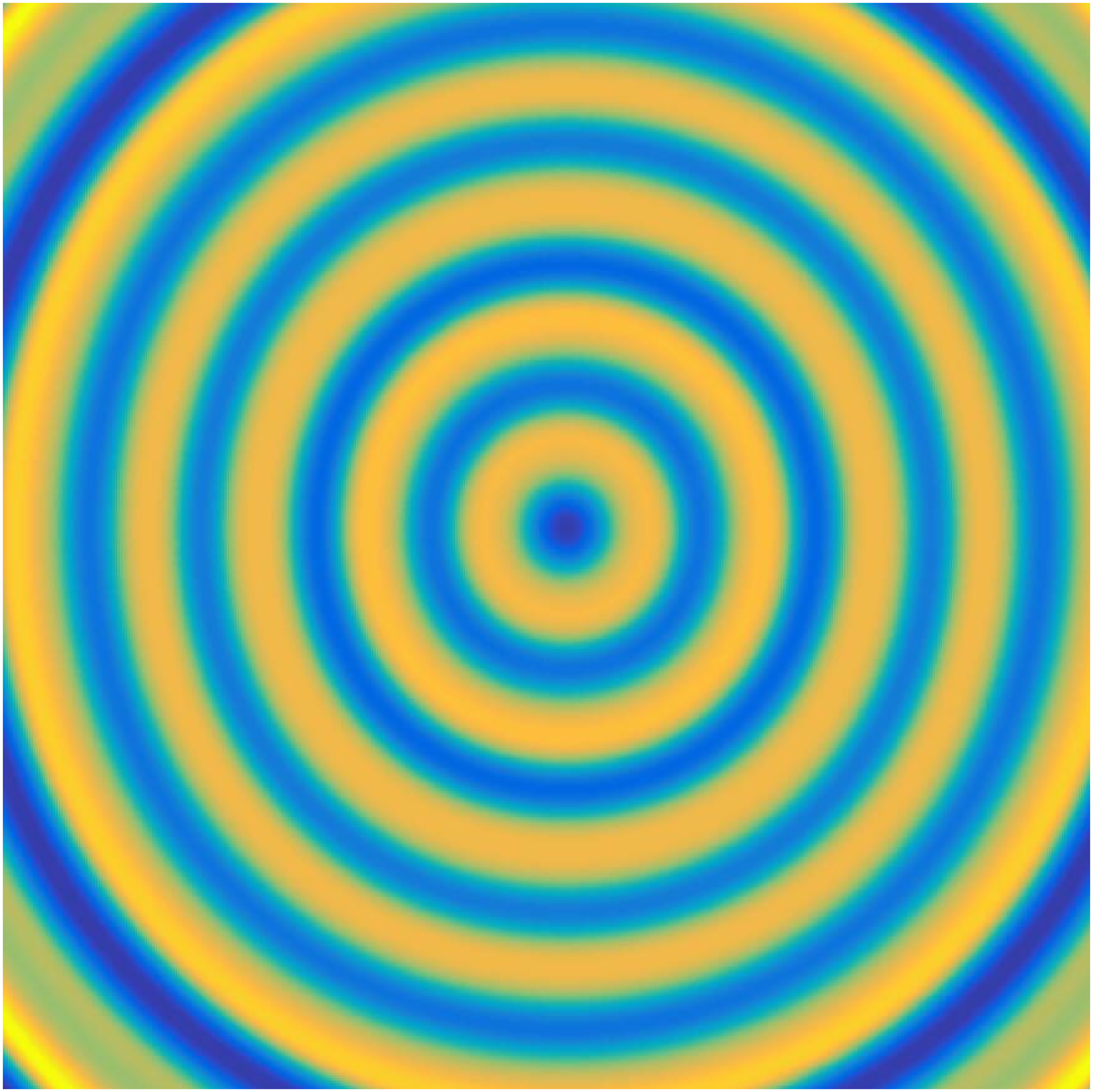}
		\end{minipage}}
		\subfigure[t=15000]{
			\begin{minipage}[t]{0.18\textwidth}
				\centering
				\includegraphics[width=1in]{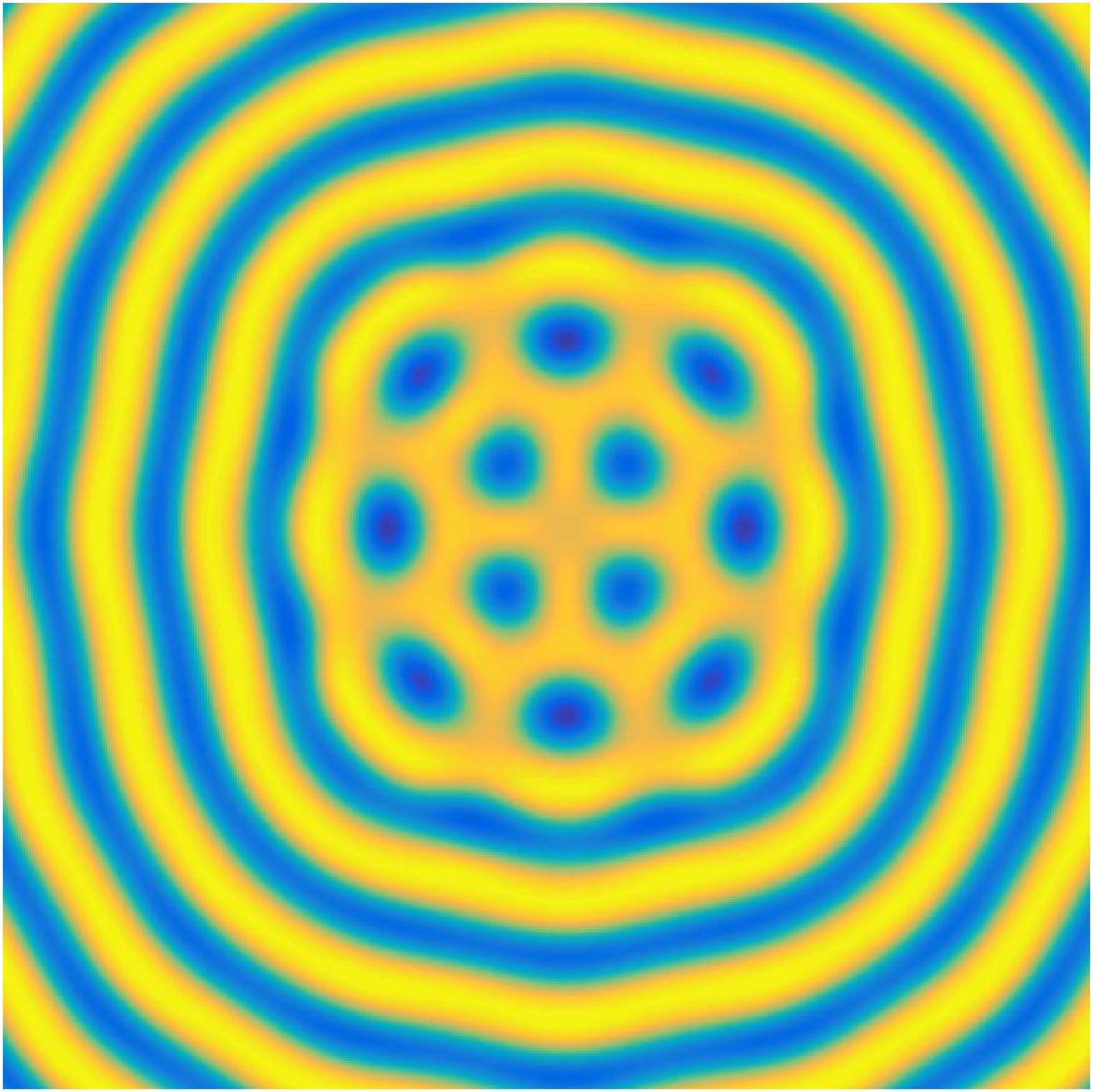}
		\end{minipage}}
		\subfigure[t=30000]{
			\begin{minipage}[t]{0.18\textwidth}
				\centering
				\includegraphics[width=1in]{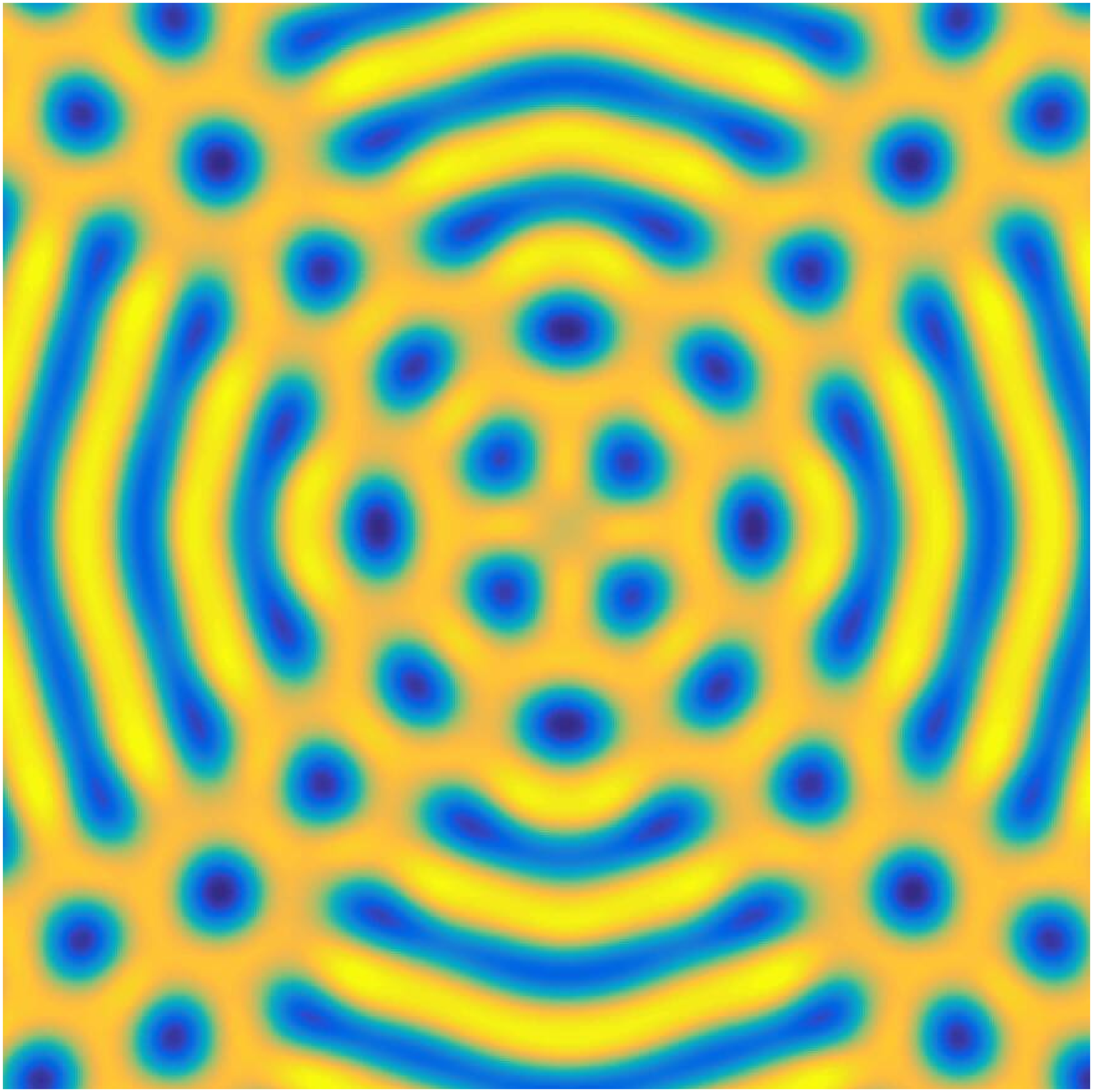}
		\end{minipage}}
		\subfigure[t=400]{
			\begin{minipage}[t]{0.18\textwidth}
				\centering
				\includegraphics[width=1in]{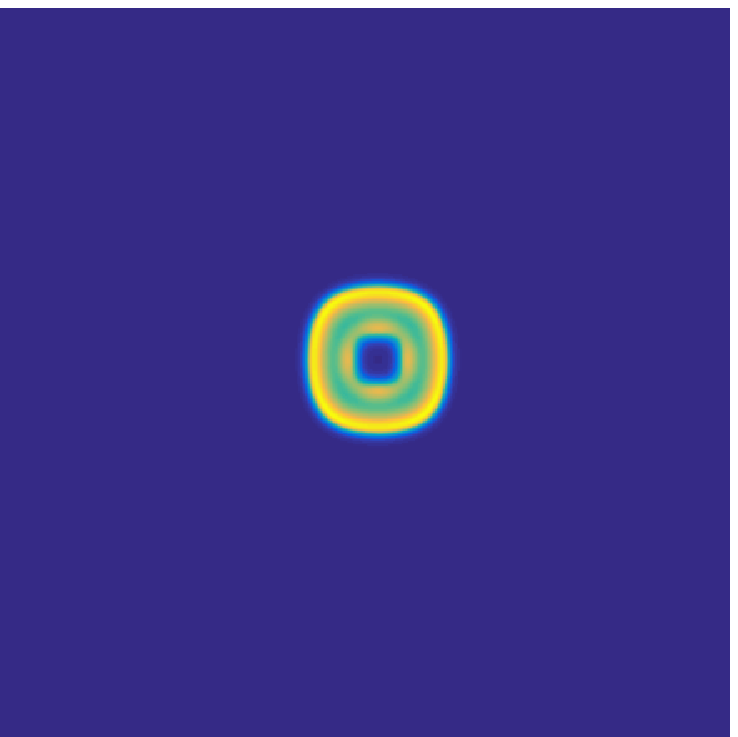}
		\end{minipage}}
		\subfigure[t=1600]{
			\begin{minipage}[t]{0.18\textwidth}
				\centering
				\includegraphics[width=1in]{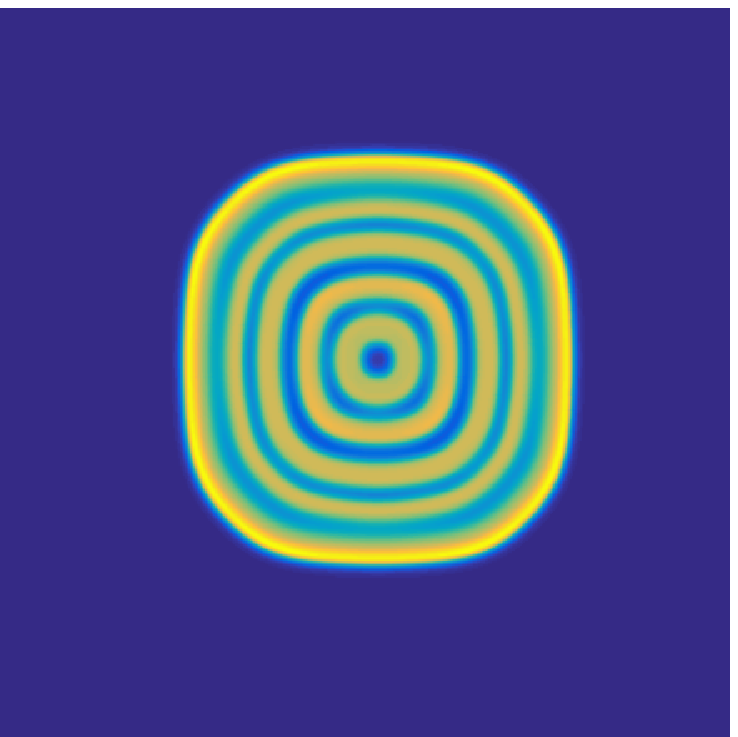}
		\end{minipage}}
		\subfigure[t=5000]{
			\begin{minipage}[t]{0.18\textwidth}
				\centering
				\includegraphics[width=1in]{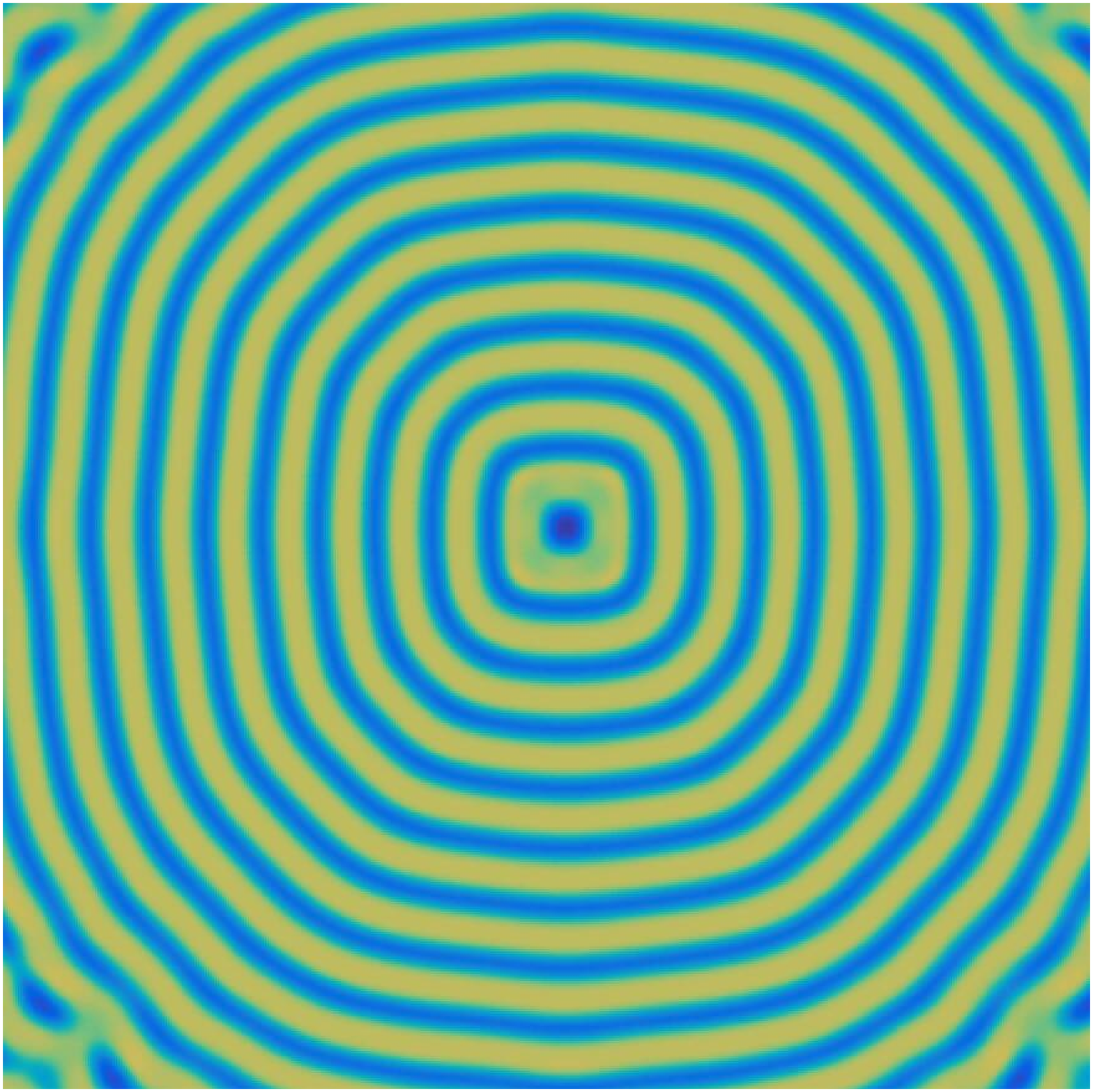}
		\end{minipage}}
		\subfigure[t=10000]{
			\begin{minipage}[t]{0.18\textwidth}
				\centering
				\includegraphics[width=1in]{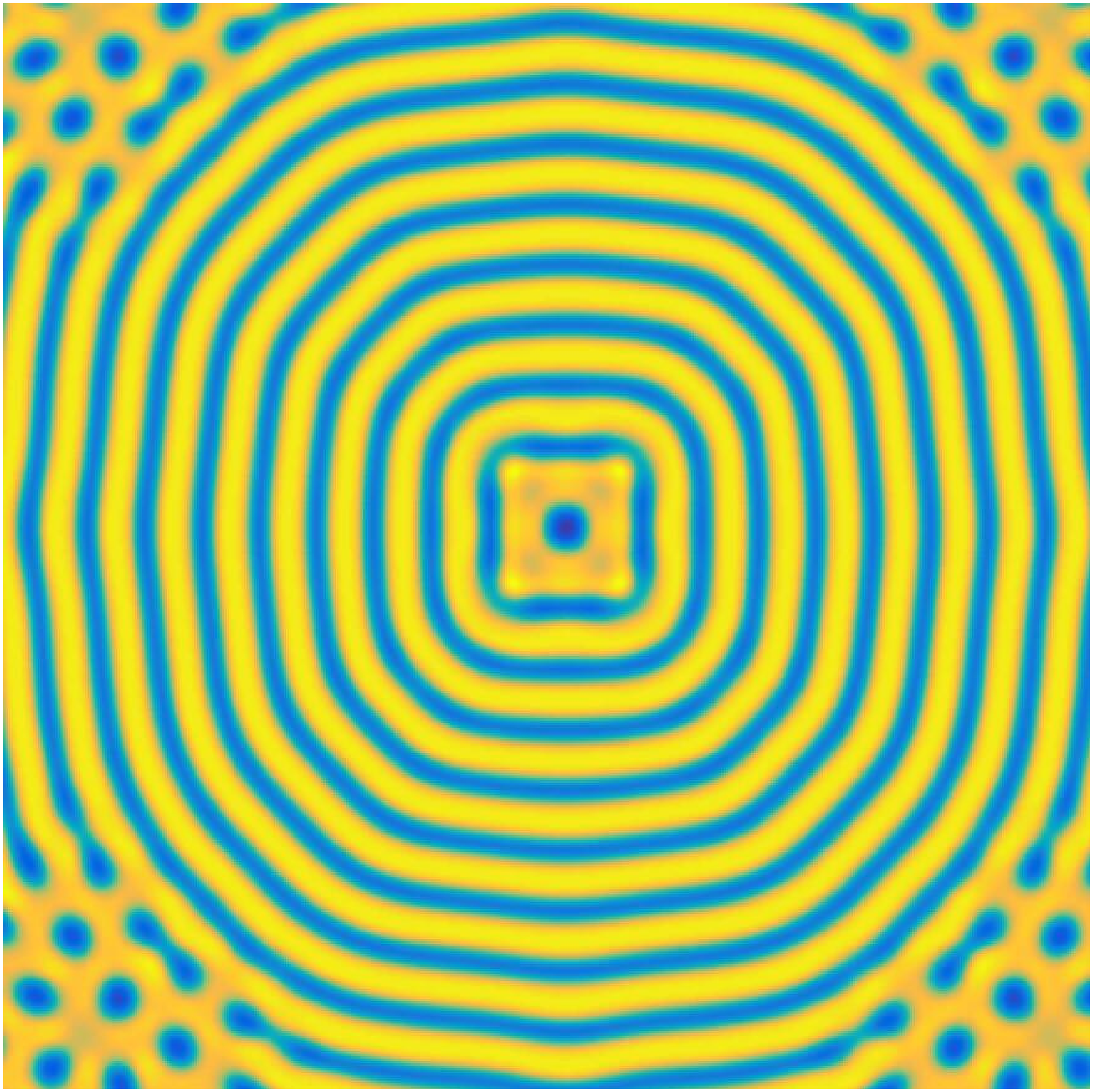}
		\end{minipage}}
		\subfigure[t=30000]{
			\begin{minipage}[t]{0.18\textwidth}
				\centering
				\includegraphics[width=1in]{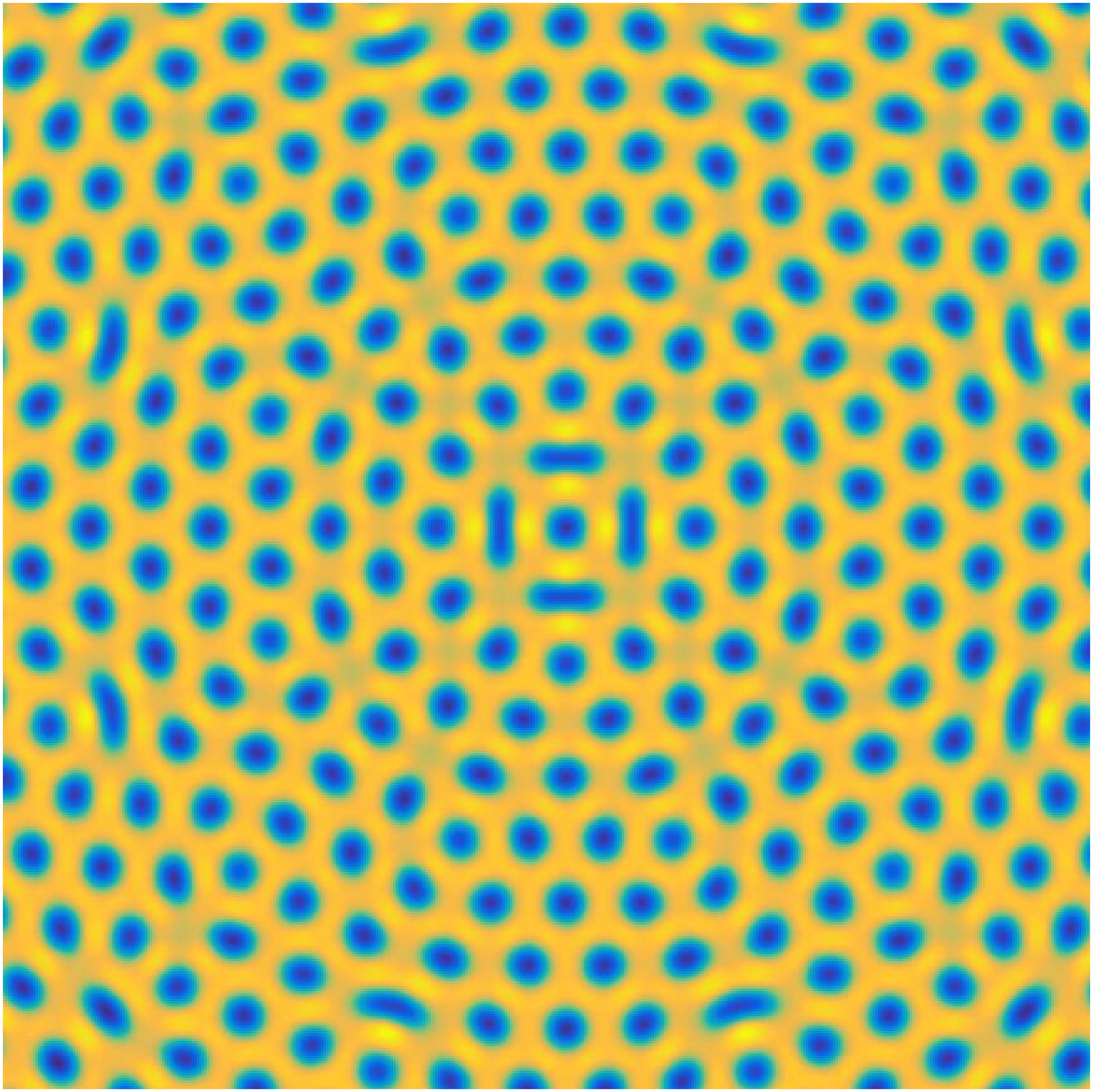}
		\end{minipage}}
		\subfigure[t=800]{
			\begin{minipage}[t]{0.18\textwidth}
				\centering
				\includegraphics[width=1in]{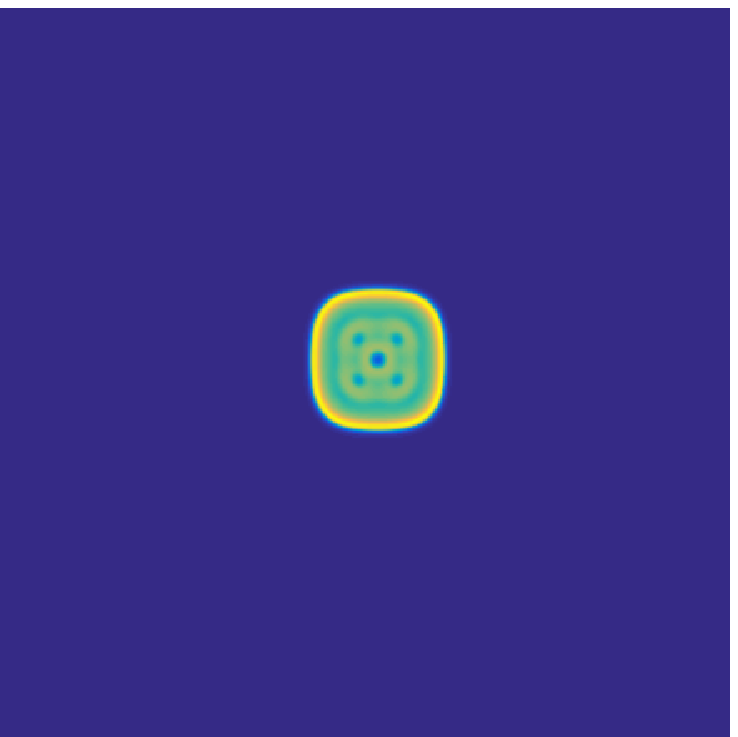}
		\end{minipage}}
		\subfigure[t=2600]{
			\begin{minipage}[t]{0.18\textwidth}
				\centering
				\includegraphics[width=1in]{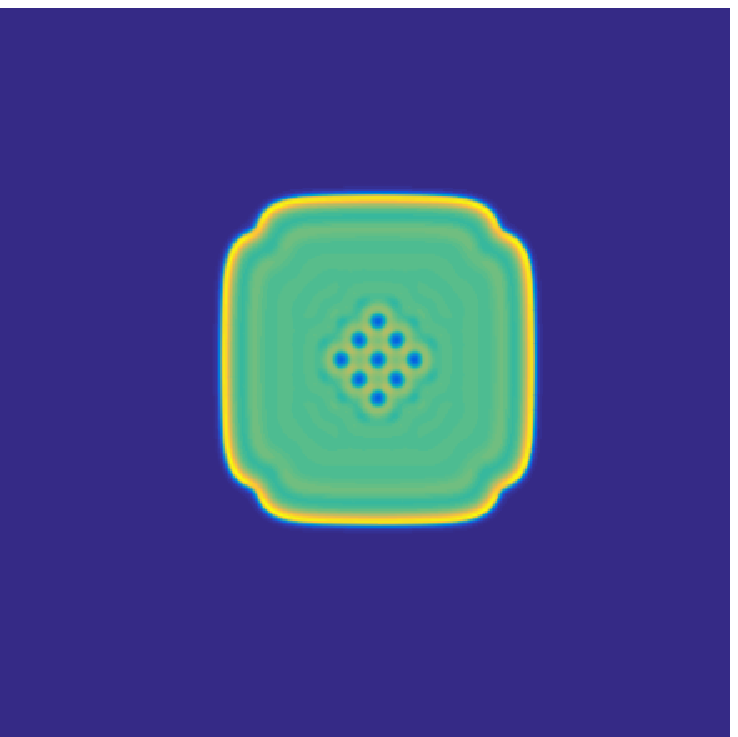}
		\end{minipage}}
		\subfigure[t=4000]{
			\begin{minipage}[t]{0.18\textwidth}
				\centering
				\includegraphics[width=1in]{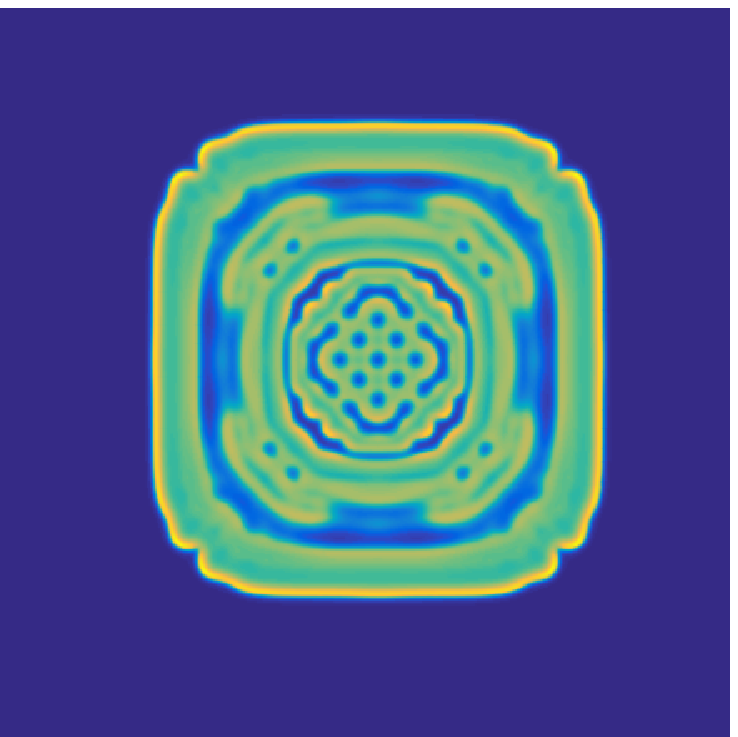}
		\end{minipage}}
		\subfigure[t=5800]{
			\begin{minipage}[t]{0.18\textwidth}
				\centering
				\includegraphics[width=1in]{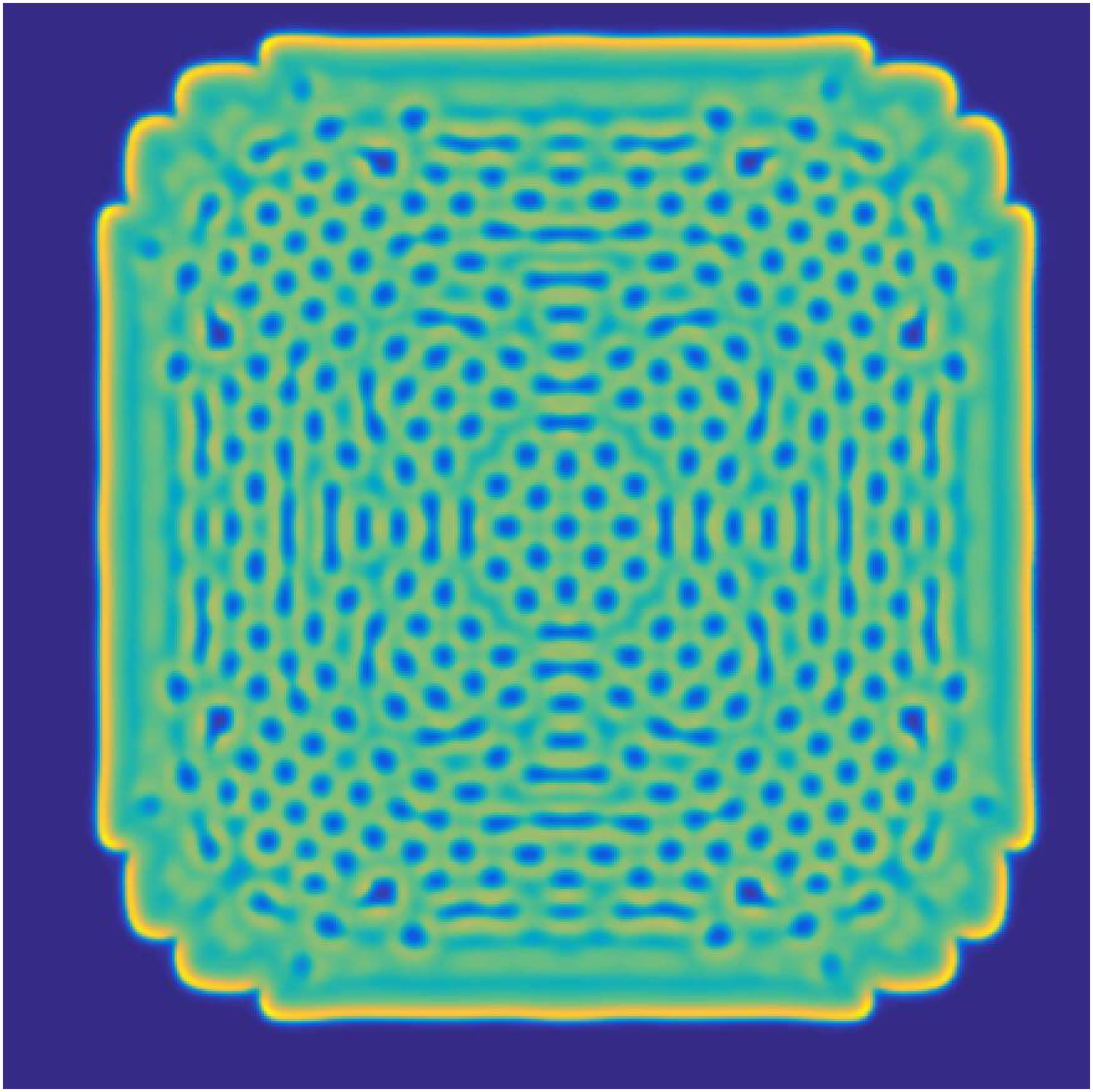}
		\end{minipage}}
		\subfigure[t=30000]{
			\begin{minipage}[t]{0.18\textwidth}
				\centering
				\includegraphics[width=1in]{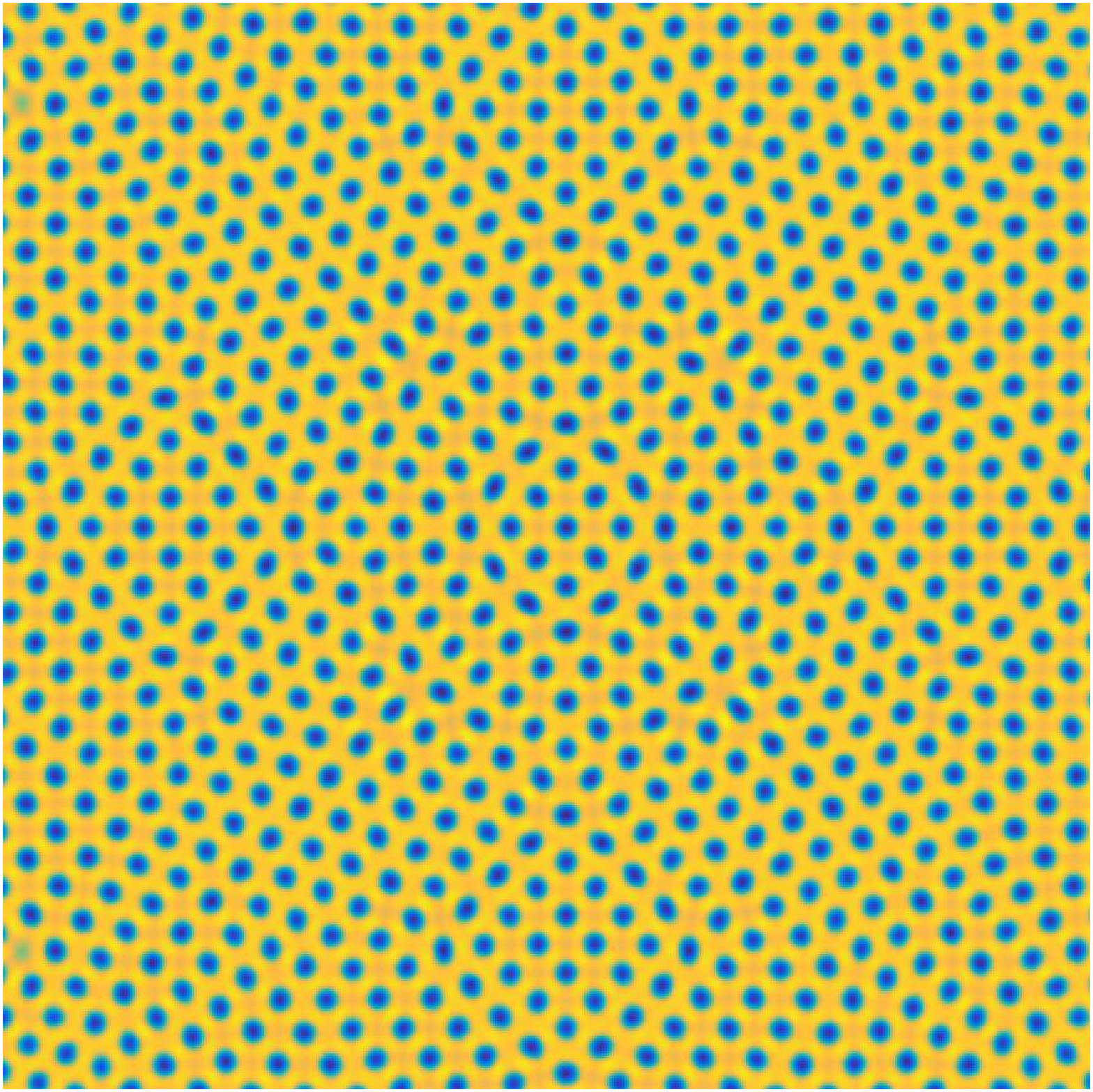}
		\end{minipage}}
		\caption{Example \ref{ex4.2}: Evolution of the solution $v$ with $\lambda=0.055$: (a)-(e)   $\alpha=2$; (f)-(j)  $\alpha=1.7$; (k)-(o) $\alpha=1.5$, the initial conditions are given by equation \eqref{eq4.7}}
		\label{fig3c}
	\end{figure}
	Take $\kappa=2$, $N=1024$, $\tau=0.1$, $K_{u}=2\times10^{-5}$, $K_{v}=K_{u}/2$, and $F=0.03$  and vary $\lambda$ in a range in which the standard diffusion model is known to exhibit interesting dynamics \cite{Pearson1993}. For a ratio of diffusion coefficients $K_{u}/K_{v}>1$, the model is known to generate different mechanisms of pattern formation depending on the values of the feed rate $F$ and decay rate $\lambda$.
	
	Figs. \ref{fig3a}-\ref{fig3c} show the evolutions of the numerical solution $v$ and summarize the effects of the super-diffusion for the fractional Gray--Scott model. The domain of interest is taken to be $(0,1)^{2}$. All the plots shown in Figs. \ref{fig3a}-\ref{fig3c}   are snapshots from the numerical solutions $v$ in the above domain. The speeds of pattern formation are different for different $\alpha$ due to the influence of fractional powers.
	
	For $\lambda=0.063$ (Fig. \ref{fig3a}), the Gray--Scott model exhibits patterns of mitosis under conditions of the standard diffusion ($\alpha=2$). When $\alpha=1.7$, the replication pattern has completely changed as the fractional order of the model is decreased. As is shown for $\alpha=1.5$, the patterns present different behavior and exert dynamical states where solitons and filaments may coexist. We observe that the structure of patterns and the size of the spots are different.
	In Fig. \ref{fig3b} with $\lambda=0.061$, the original model produces a wavefront propagation partially driven by curvature. When the order $\alpha$ of  the fractional Laplacian operator decreases, we can find that the area generated by diffusion propagation becomes larger. The final field is made up of much thinner filaments and transforms from point shape to line shape. For $\lambda=0.055$ (Fig. \ref{fig3c}), the model with standard diffusion $(\alpha = 2)$ is well known to organize in a steady state field of negative solitons. Moreover, the reduction of the fractional order $(\alpha = 1.7)$ generates a decrease in the velocity of propagation of the initial perturbation, and affects the size of patterns with smaller spots. For smaller values of the fractional power $(\alpha = 1.5)$, we observe a new process of nucleation of structures in the centre and in the boundaries of the domain. The process propagates outward until the entire area reaches the final steady state.
	
	\begin{figure}[htbp]
		\centering
		\subfigure[t=200]{
			\begin{minipage}[t]{0.18\textwidth}
				\centering
				\includegraphics[width=3.5cm,height=2.5cm]{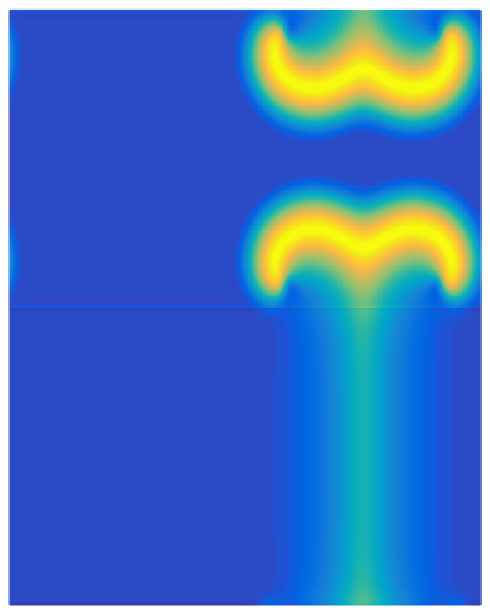}
		\end{minipage}}
		\subfigure[t=400]{
			\begin{minipage}[t]{0.18\textwidth}
				\centering
				\includegraphics[width=3.5cm,height=2.5cm]{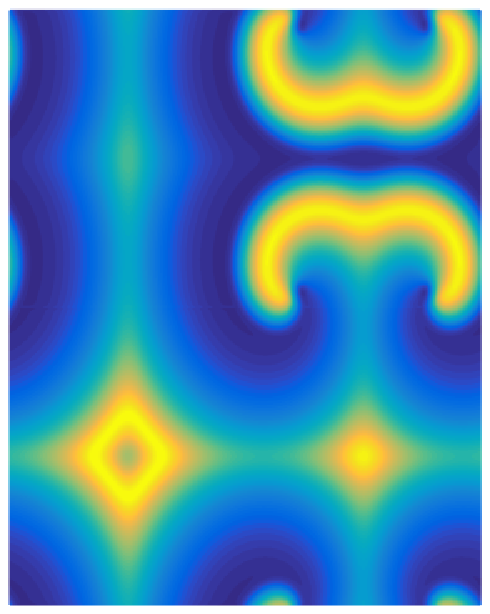}
		\end{minipage}}
		\subfigure[t=1000]{
			\begin{minipage}[t]{0.18\textwidth}
				\centering
				\includegraphics[width=3.5cm,height=2.5cm]{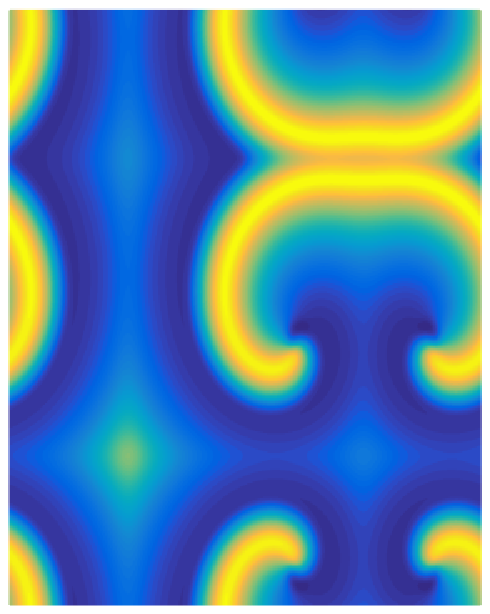}
		\end{minipage}}
		\subfigure[t=1500]{
			\begin{minipage}[t]{0.18\textwidth}
				\centering
				\includegraphics[width=3.5cm,height=2.5cm]{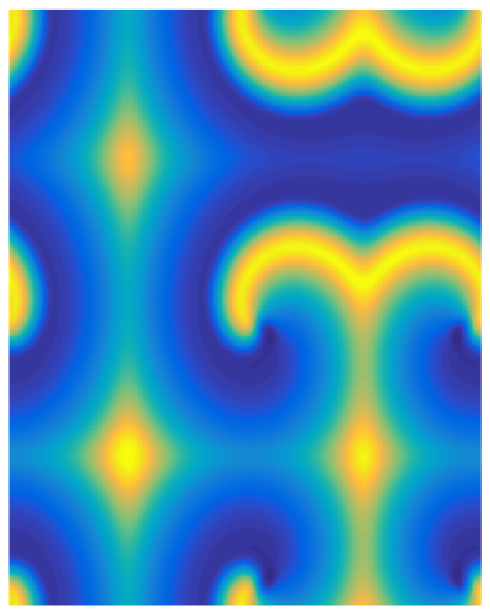}
		\end{minipage}}
		\subfigure[t=2000]{
			\begin{minipage}[t]{0.18\textwidth}
				\centering
				\includegraphics[width=3.5cm,height=2.5cm]{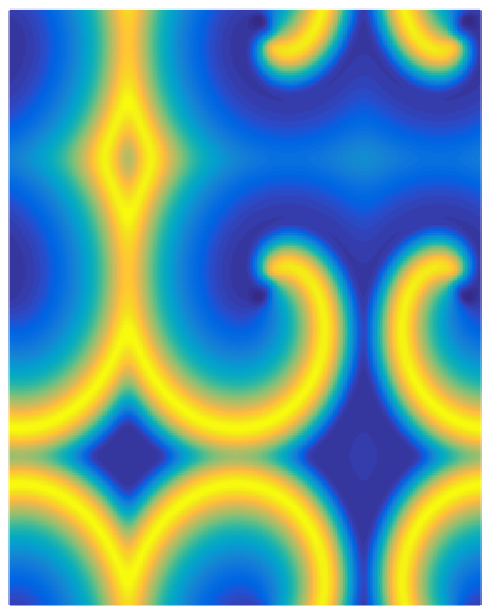}
		\end{minipage}}
		\subfigure[t=200]{
			\begin{minipage}[t]{0.18\textwidth}
				\centering
				\includegraphics[width=3.5cm,height=2.5cm]{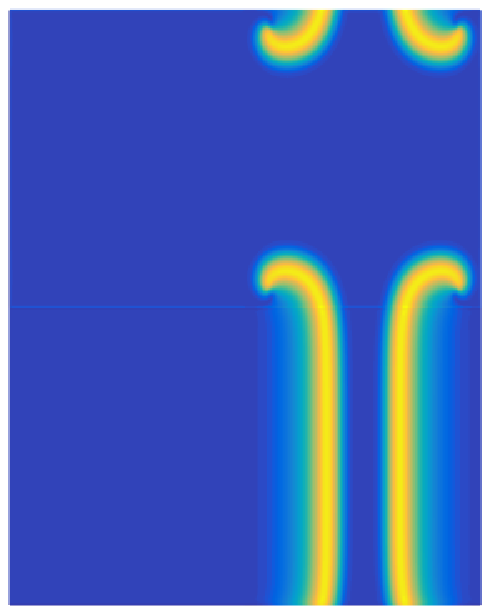}
		\end{minipage}}
		\subfigure[t=400]{
			\begin{minipage}[t]{0.18\textwidth}
				\centering
				\includegraphics[width=3.5cm,height=2.5cm]{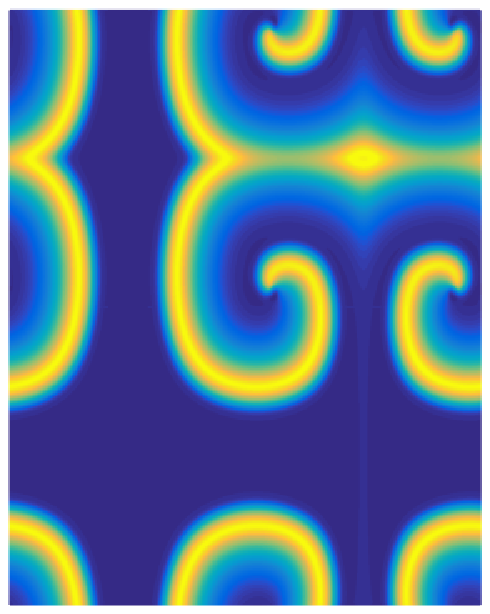}
		\end{minipage}}
		\subfigure[t=1000]{
			\begin{minipage}[t]{0.18\textwidth}
				\centering
				\includegraphics[width=3.5cm,height=2.5cm]{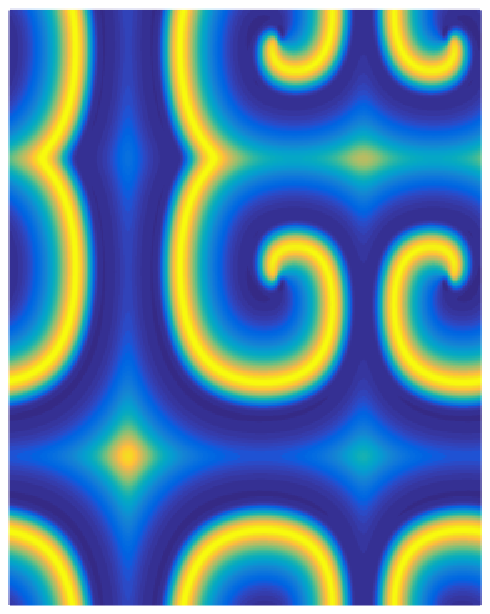}
		\end{minipage}}
		\subfigure[t=1500]{
			\begin{minipage}[t]{0.18\textwidth}
				\centering
				\includegraphics[width=3.5cm,height=2.5cm]{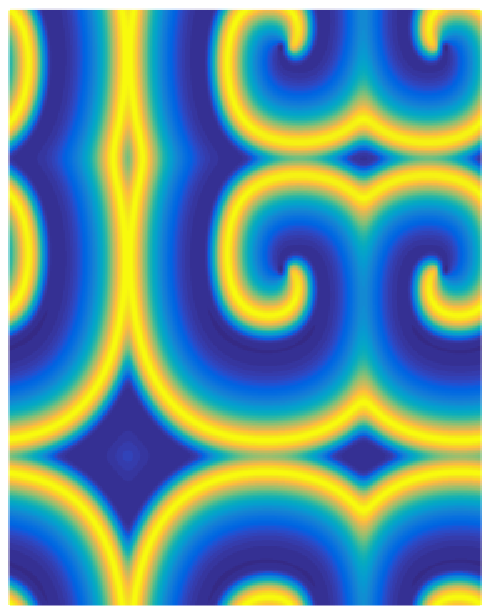}
		\end{minipage}}
		\subfigure[t=2000]{
			\begin{minipage}[t]{0.18\textwidth}
				\centering
				\includegraphics[width=3.5cm,height=2.5cm]{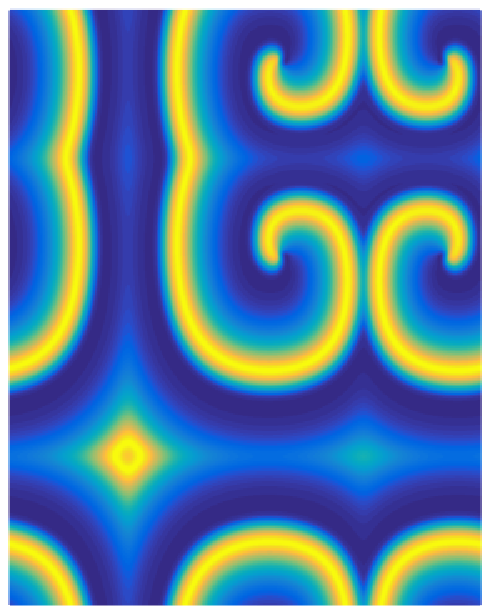}
		\end{minipage}}
		\subfigure[t=200]{
			\begin{minipage}[t]{0.18\textwidth}
				\centering
				\includegraphics[width=3.5cm,height=2.5cm]{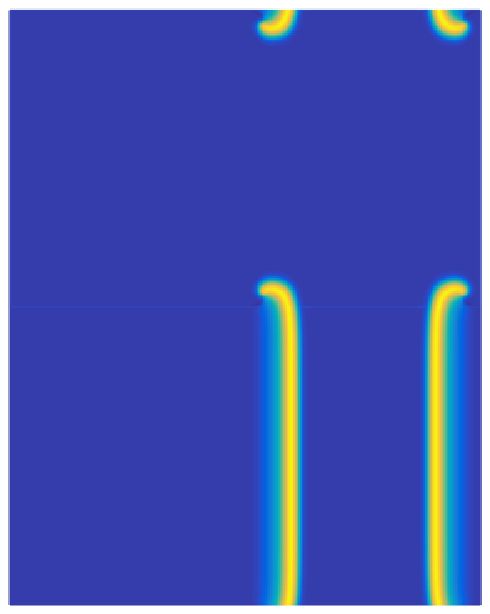}
		\end{minipage}}
		\subfigure[t=400]{
			\begin{minipage}[t]{0.18\textwidth}
				\centering
				\includegraphics[width=3.5cm,height=2.5cm]{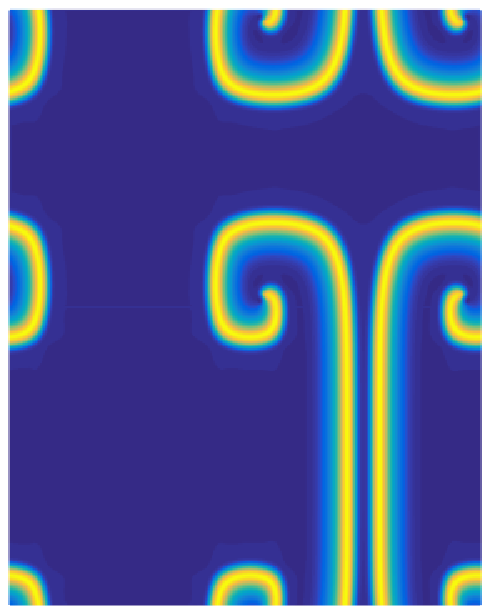}
		\end{minipage}}
		\subfigure[t=1000]{
			\begin{minipage}[t]{0.18\textwidth}
				\centering
				\includegraphics[width=3.5cm,height=2.5cm]{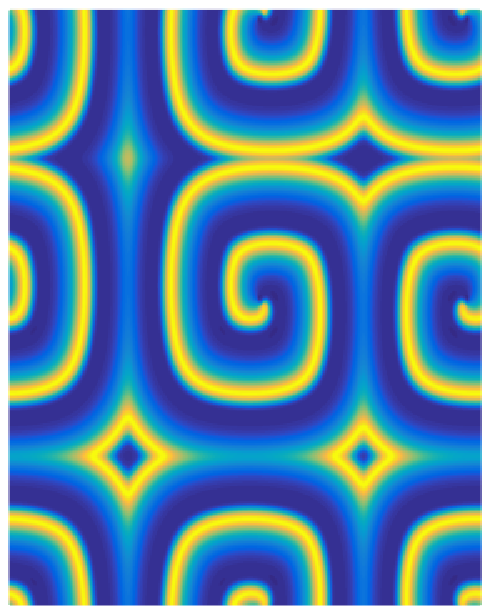}
		\end{minipage}}
		\subfigure[t=1500]{
			\begin{minipage}[t]{0.18\textwidth}
				\centering
				\includegraphics[width=3.5cm,height=2.5cm]{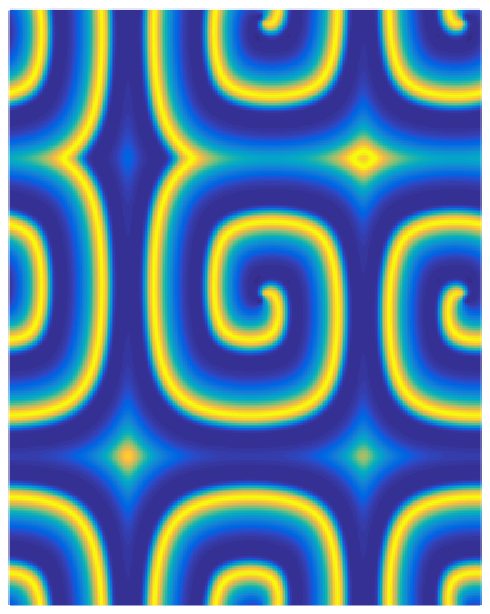}
		\end{minipage}}
		\subfigure[t=2000]{
			\begin{minipage}[t]{0.18\textwidth}
				\centering
				\includegraphics[width=3.5cm,height=2.5cm]{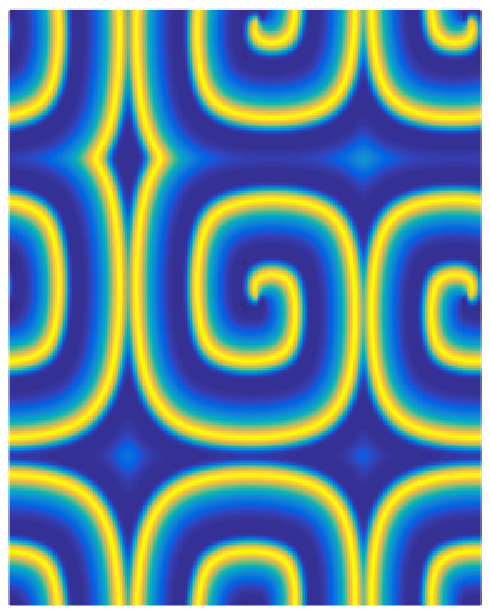}
		\end{minipage}}
		\caption{Example \ref{ex4.3}: Evolution of the solution $u$ with $K_{u}=1e-04$: (a)-(e)  $\alpha=2$; (f)-(j)  $\alpha=1.7$;	(k)-(o)  $\alpha=1.5$, the initial conditions are given by equations \eqref{eq4.5}-\eqref{eq4.6}.}
		\label{fig4a}
	\end{figure}
	\begin{figure}[htbp]
		\centering
		\subfigure[t=200]{
			\begin{minipage}[t]{0.18\textwidth}
				\centering
				\includegraphics[width=3.5cm,height=2.5cm]{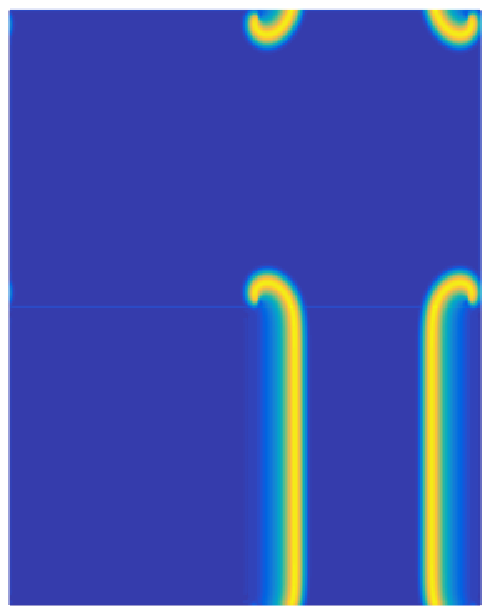}
		\end{minipage}}
		\subfigure[t=400]{
			\begin{minipage}[t]{0.18\textwidth}
				\centering
				\includegraphics[width=3.5cm,height=2.5cm]{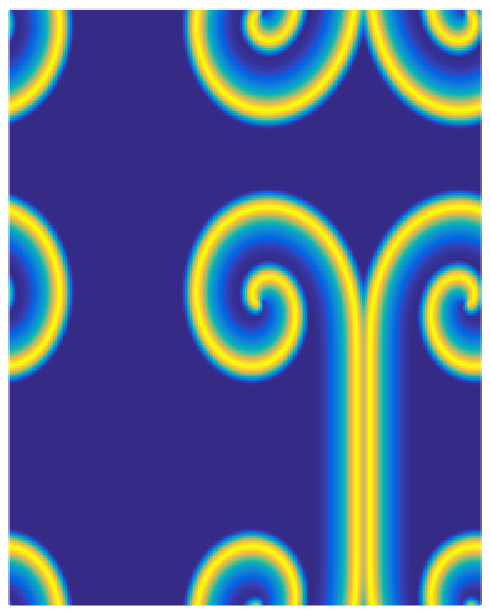}
		\end{minipage}}
		\subfigure[t=1000]{
			\begin{minipage}[t]{0.18\textwidth}
				\centering
				\includegraphics[width=3.5cm,height=2.5cm]{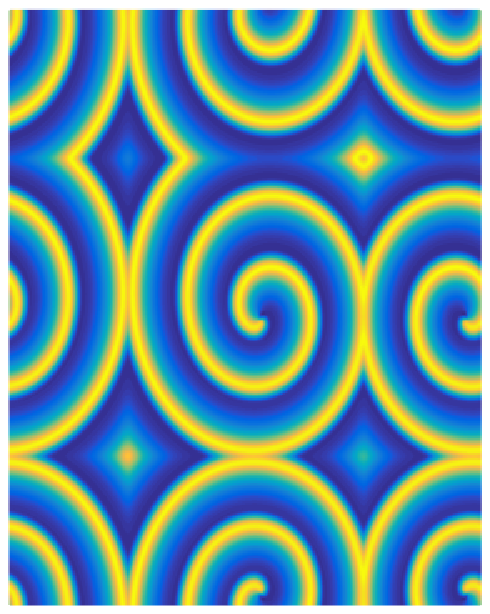}
		\end{minipage}}
		\subfigure[t=1500]{
			\begin{minipage}[t]{0.18\textwidth}
				\centering
				\includegraphics[width=3.5cm,height=2.5cm]{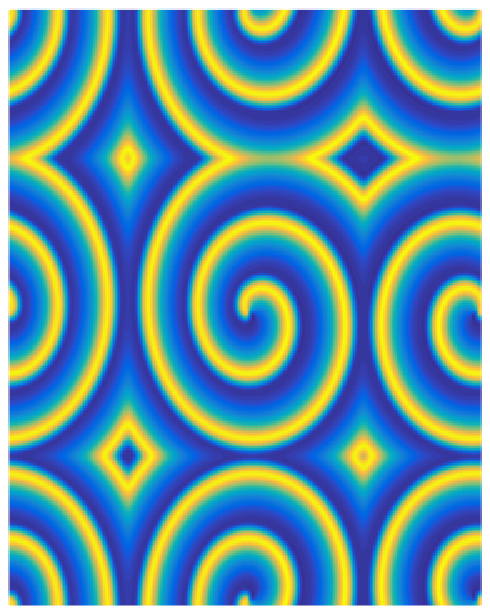}
		\end{minipage}}
		\subfigure[t=2000]{
			\begin{minipage}[t]{0.18\textwidth}
				\centering
				\includegraphics[width=3.5cm,height=2.5cm]{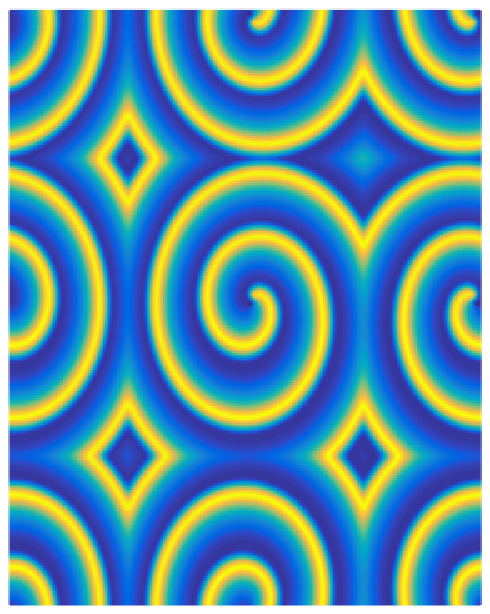}
		\end{minipage}}
		\subfigure[t=200]{
			\begin{minipage}[t]{0.18\textwidth}
				\centering
				\includegraphics[width=3.5cm,height=2.5cm]{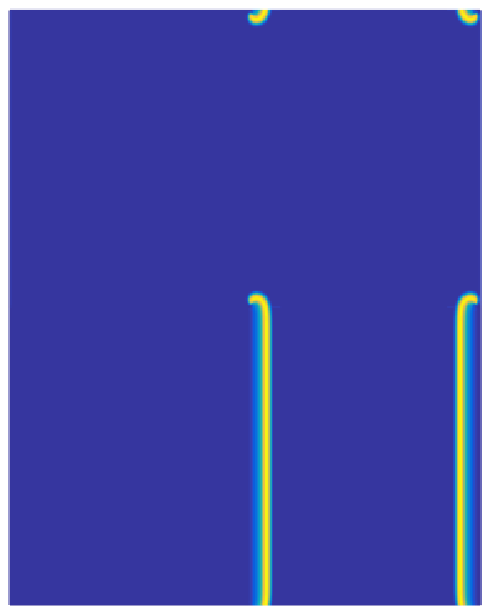}
		\end{minipage}}
		\subfigure[t=400]{
			\begin{minipage}[t]{0.18\textwidth}
				\centering
				\includegraphics[width=3.5cm,height=2.5cm]{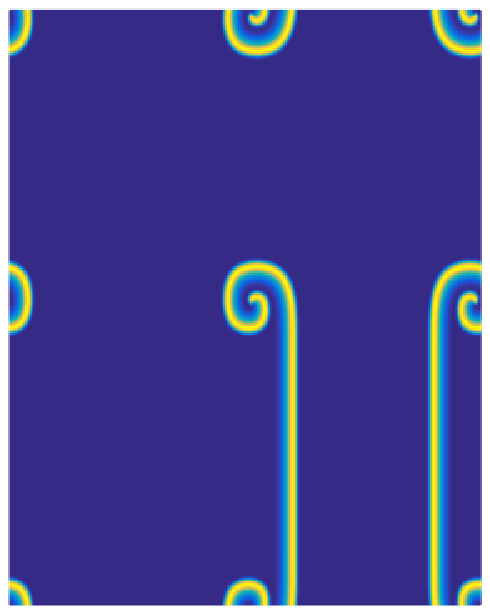}
		\end{minipage}}
		\subfigure[t=1000]{
			\begin{minipage}[t]{0.18\textwidth}
				\centering
				\includegraphics[width=3.5cm,height=2.5cm]{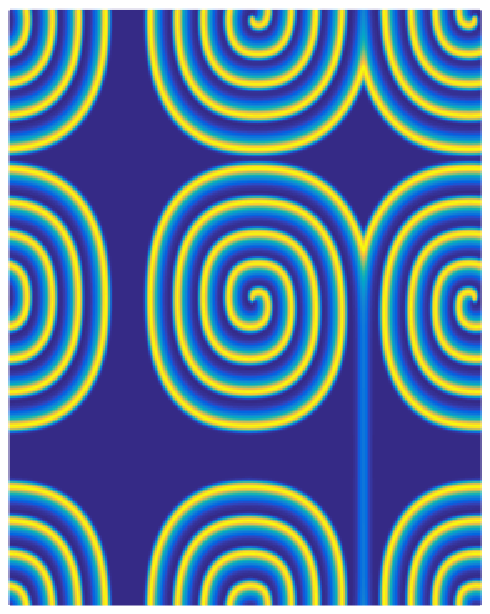}
		\end{minipage}}
		\subfigure[t=1500]{
			\begin{minipage}[t]{0.18\textwidth}
				\centering
				\includegraphics[width=3.5cm,height=2.5cm]{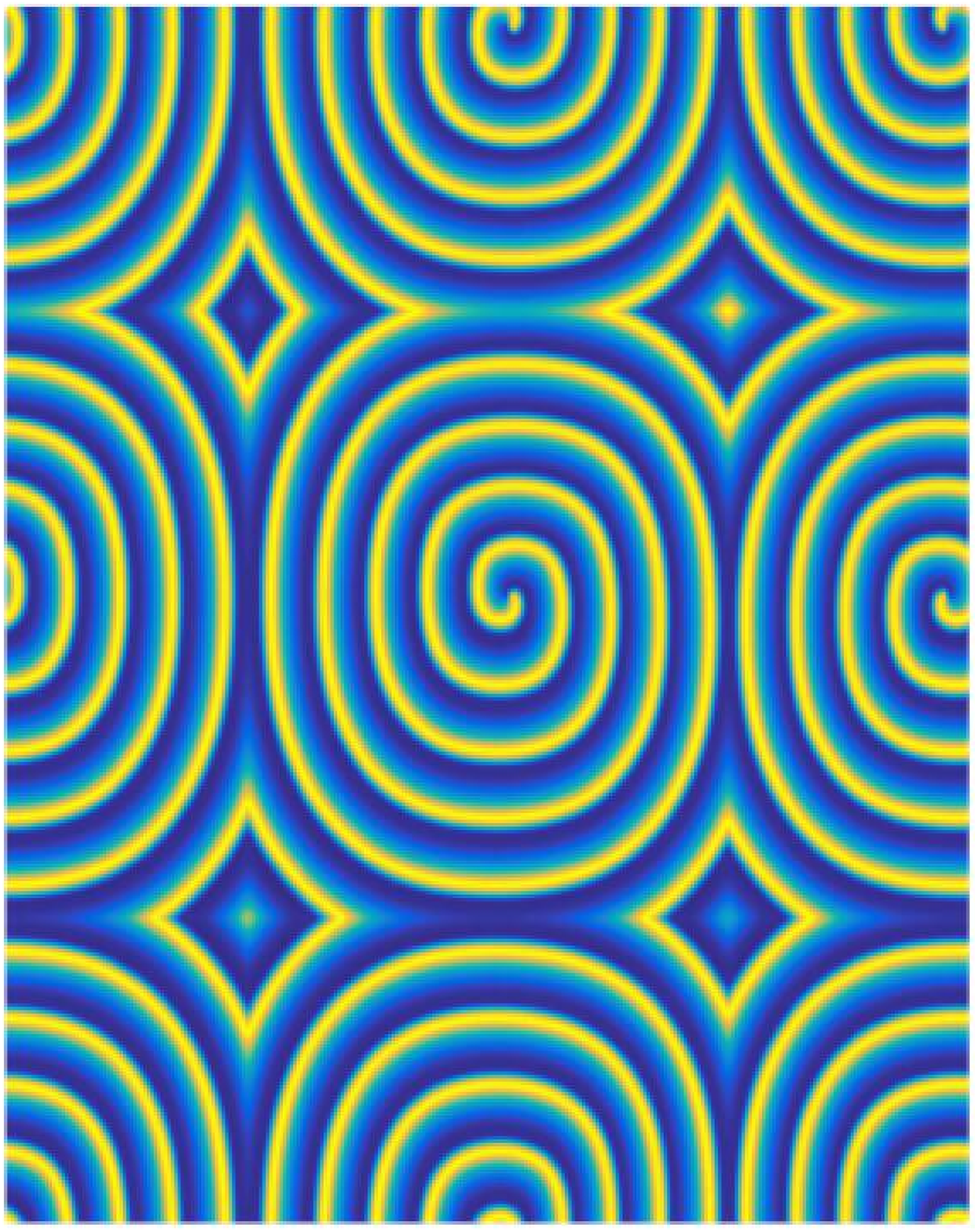}
		\end{minipage}}
		\subfigure[t=2000]{
			\begin{minipage}[t]{0.18\textwidth}
				\centering
				\includegraphics[width=3.5cm,height=2.5cm]{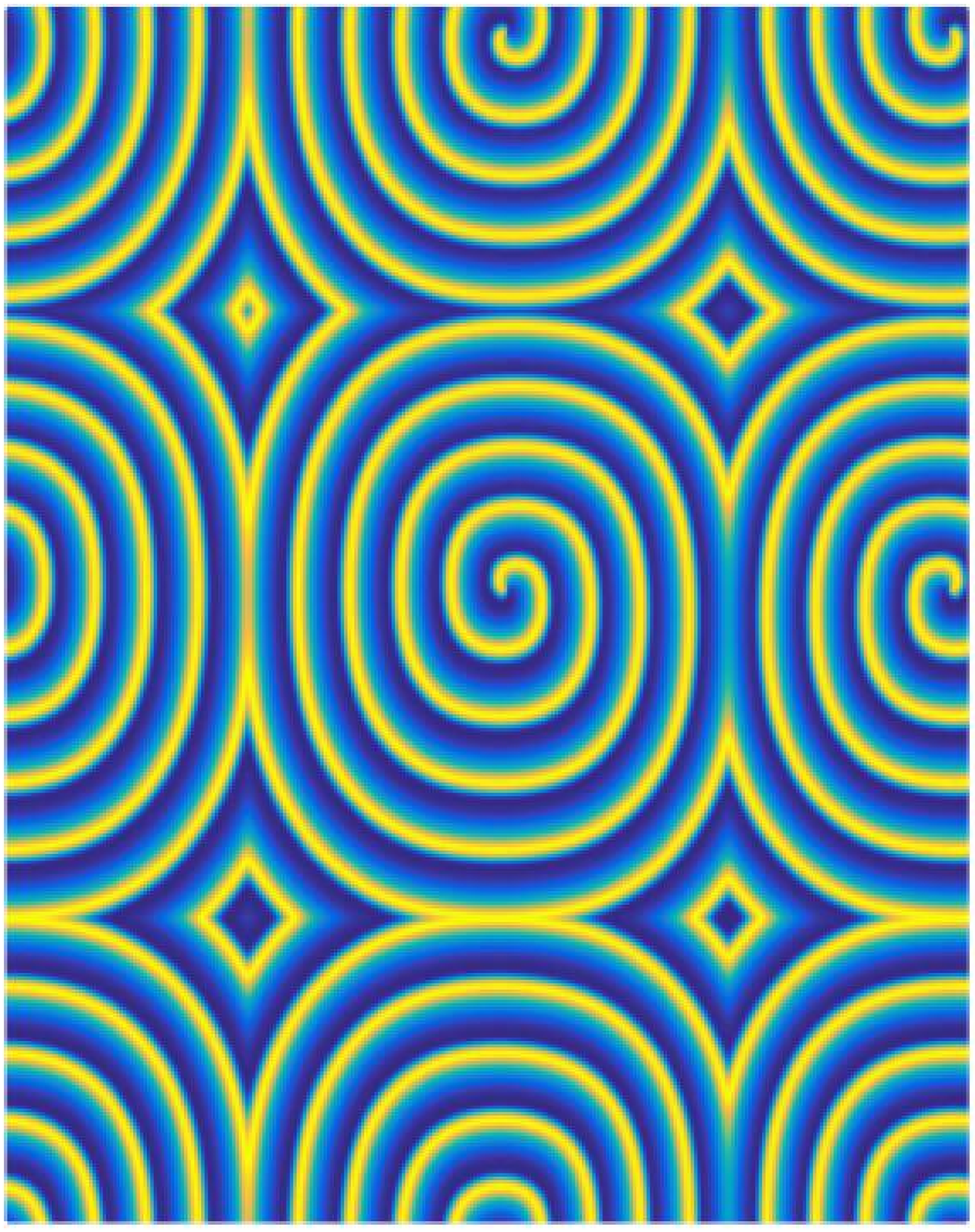}
		\end{minipage}}
		\subfigure[t=200]{
			\begin{minipage}[t]{0.18\textwidth}
				\centering
				\includegraphics[width=3.5cm,height=2.5cm]{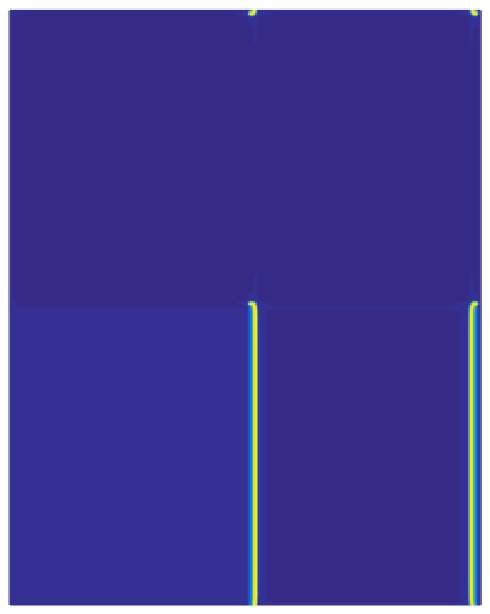}
		\end{minipage}}
		\subfigure[t=400]{
			\begin{minipage}[t]{0.18\textwidth}
				\centering
				\includegraphics[width=3.5cm,height=2.5cm]{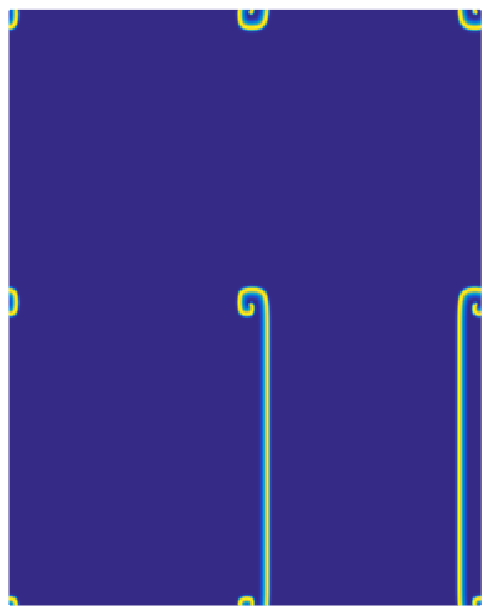}
		\end{minipage}}
		\subfigure[t=1000]{
			\begin{minipage}[t]{0.18\textwidth}
				\centering
				\includegraphics[width=3.5cm,height=2.5cm]{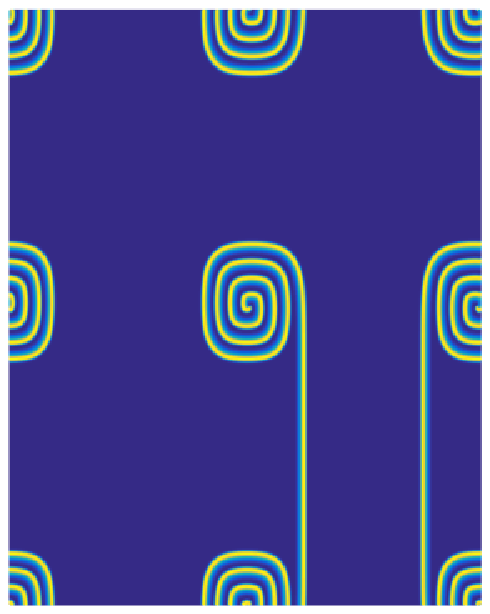}
		\end{minipage}}
		\subfigure[t=1500]{
			\begin{minipage}[t]{0.18\textwidth}
				\centering
				\includegraphics[width=3.5cm,height=2.5cm]{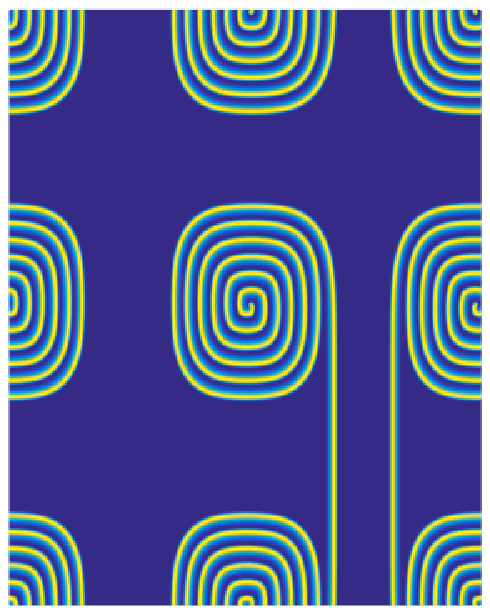}
		\end{minipage}}
		\subfigure[t=2000]{
			\begin{minipage}[t]{0.18\textwidth}
				\centering
				\includegraphics[width=3.5cm,height=2.5cm]{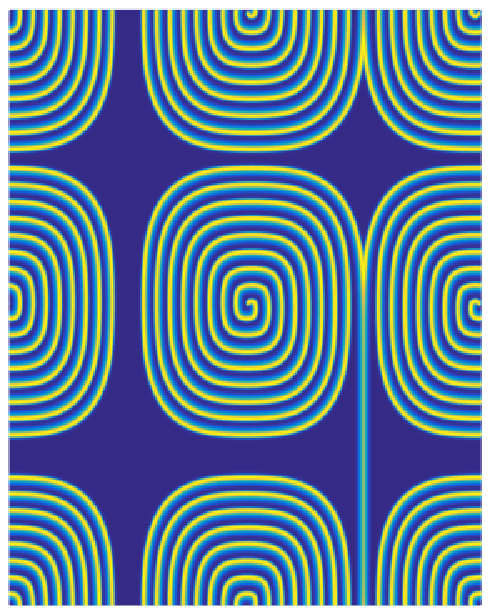}
		\end{minipage}}
		\caption{Example \ref{ex4.3}: Evolution of the solution $u$ with $K_{u}=1e-05$: (a)-(e)   $\alpha=2$; (f)-(j)   $\alpha=1.7$; (k)-(o)  $\alpha=1.5$, the initial conditions are shown as equations \eqref{eq4.5}-\eqref{eq4.6}.}
		\label{fig4b}
	\end{figure}

	\begin{example}[Fractional FitzHugh--Nagumo model]\label{ex4.3}	
		Here $\Omega=(0,2.5)\times (0,2.5)$, $u$ is the excitation variable which can be identified with the transmembrane potential, $v$ is the recovery variable, and $K_{u}$ is a small positive diffusion coefficient. In order to illustrate the effect of varying $\alpha$ in the model. The initial conditions are taken as
		\begin{equation}\begin{aligned}
		\label{eq4.5}
		u(x_1,x_2,0)=\left\{ \begin{array}{ll}
		1,\quad (x_1,x_2)\in (0,0.125]\times (0,0.125),\\
		0,\quad \mathrm{elsewhere},
		\end{array}\right.
		\end{aligned}\end{equation}
		\begin{equation}\begin{aligned}
		\label{eq4.6}
		v(x_1,x_2,0)=\left\{ \begin{array}{ll}
		0.1,\quad (x_1,x_2)\in (0,2.5)\times [0.125,2.5),\\
		0,\qquad  (x_1,x_2)\in(0,2.5)\times(0,0.125).
		\end{array}\right.
		\end{aligned}\end{equation}
	\end{example}
	We choose the parameters $\kappa=2$, $\mu=0.1$, $\varepsilon=0.01$, $\beta=0.5$, $\gamma=1$, $\delta=0$, which are known to generate stable patterns in the system in the form of spiral waves. Stable rotating solutions at different time are presented in Fig. \ref{fig4a} to illustrate the effect of fractional order in the FitzHugh--Nagumo model with $K_{u}=1\times10^{-4}$. We can observe that the width of the excitation wavefront is markedly reduced for decreasing $\alpha$, so is the wavelength of the system, with the domain being able to accommodate a larger number of wavefronts for smaller $\alpha$. By the comparison between Fig. \ref{fig4a} and Fig. \ref{fig4b} $(K_{u}=1\times10^{-5})$, it is obvious that the role of reducing the fractional power $\alpha$ is not equivalent to the influence of a decreased diffusion coefficient $K_{u}$ in the pure diffusion case. For approximately the same width of the excitation wavefront, the wavelength of the model is larger in the fractional diffusion case, due to the long-tailed mechanisms of the fractional Laplacian operator. These results are consistent with those of Engler \cite{Engler2010} for the fractional reaction-diffusion Fisher equation, showing distinct effects of fractional diffusion to those of reduced conductivity for a family of traveling wave solutions. They also illustrate the use of fractional diffusion as a modeling tool to characterize intermediate dynamic states not solely described by pure diffusion mechanisms.
	
	\section{Conclusions}\label{sec6}
	Fractional differential equations have been proved to be useful tools for modeling diffusive processes associated with anomalous diffusion or spatial heterogeneity. In this paper, we study a second-order stabilized semi-implicit time-stepping Fourier spectral method for the reaction-diffusion systems of equations with space described by the fractional Laplacian in bounded rectangular domains. An optimal error estimate of the numerical scheme is obtained without imposing the CFL condition by adopting the temporal-spatial error splitting argument. We also analyze the linear stability of the stabilized semi-implicit method and obtain a practical criteria to choose the time step size to guarantee the stability of the semi-implicit method in real applications. Our approach is illustrated by solving several problems of practical interest, including the fractional Allen-Cahn, FitzHugh-Nagumo and Gray-Scott models show that such systems can have different dynamics to pure diffusion, and drastically different evolving patterns compared to integer-order models.
	\section*{Acknowledgments}
	This work was supported by the National Natural Science Foundation of China (Grants Nos. 11102102, 11672163), the Natural Science Foundation of Shandong Province (Grant ZR2017MA030) and the MURI/ARO on ``Fractional PDEs for Conservation Laws and Beyond: Theory, Numerics and Applications (W911NF-15-1-0562)".
	
	\section*{Appendix A. Proof of Theorem \ref{th2.1}}
	In the following, we denote $C,C_q,\widetilde{C}_q$,
	and $\widehat{C}_q$ as positive constants that are independent of the
	time step size $\tau$ and the degree of approximation space $N$,
	where $q=1,2,\cdots$.
	
	\renewcommand\thelemma{A.1}
	We first introduce some Lemmas that will be used in the numerical analysis later.
	\begin{lemma}[\cite{Ainsworth2017}]\label{le3.3}
		Let $s,r\geq0$, then for any $u,v \in H_{per}^{s+r}(\Omega)$, it holds that
		\begin{displaymath}\label{eq2.11}
		((-\Delta)^{s+r}u,v)=((-\Delta)^{s}u,(-\Delta)^{r}v).
		\end{displaymath}
	\end{lemma}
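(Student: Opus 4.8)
The plan is to diagonalize both sides in the Fourier basis, where each operator $(-\Delta)^\sigma$ acts as multiplication by the real, nonnegative symbol $(k^2+l^2)^\sigma$, and then to compare the resulting series term by term. First I would record the two facts that make this work: by the definition of the fractional Laplacian, the $(k,l)$-th Fourier coefficient of $(-\Delta)^\sigma w$ is $(k^2+l^2)^\sigma\hat w_{kl}$ for any admissible $w$; and the exponentials $\{e^{ikx_1+ilx_2}\}$ are orthonormal for the inner product $(\cdot,\cdot)$, so Parseval's identity gives $(f,g)=\sum_{k,l}\hat f_{kl}\overline{\hat g_{kl}}$.

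Second, I would simply expand each side. On the left,
\[
((-\Delta)^{s+r}u,v)=\sum_{k,l}(k^2+l^2)^{s+r}\,\hat u_{kl}\,\overline{\hat v_{kl}},
\]
and on the right, using that the Fourier coefficients of $(-\Delta)^s u$ and $(-\Delta)^r v$ are $(k^2+l^2)^s\hat u_{kl}$ and $(k^2+l^2)^r\hat v_{kl}$ and that the symbol $(k^2+l^2)^r$ is real,
\[
((-\Delta)^s u,(-\Delta)^r v)=\sum_{k,l}(k^2+l^2)^s\,(k^2+l^2)^r\,\hat u_{kl}\,\overline{\hat v_{kl}}=\sum_{k,l}(k^2+l^2)^{s+r}\,\hat u_{kl}\,\overline{\hat v_{kl}},
\]
which is identical to the left-hand series; the identity follows.

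Third, I would supply the one point that actually needs care. Since $(-\Delta)^{s+r}u$ need not belong to $L^2_{per}(\Omega)$ when $u$ lies only in $H^{s+r}_{per}(\Omega)$, the pairings above have to be read as the bilinear forms defined by those Fourier series, and one must verify that these series converge absolutely so the rearrangement is legitimate. Splitting $(k^2+l^2)^{s+r}=(k^2+l^2)^{(s+r)/2}\cdot(k^2+l^2)^{(s+r)/2}$ and applying the Cauchy--Schwarz inequality for sequences gives
\[
\sum_{k,l}(k^2+l^2)^{s+r}\,|\hat u_{kl}|\,|\hat v_{kl}|\le\Big(\sum_{k,l}(k^2+l^2)^{s+r}|\hat u_{kl}|^2\Big)^{1/2}\Big(\sum_{k,l}(k^2+l^2)^{s+r}|\hat v_{kl}|^2\Big)^{1/2}=|u|_{s+r}\,|v|_{s+r}<\infty,
\]
and the same estimate (using $(k^2+l^2)^s(k^2+l^2)^r=(k^2+l^2)^{s+r}$) controls the right-hand series as well. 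I do not anticipate any genuine obstacle: the substance is just Parseval together with the multiplicativity of the symbol, and the only things to keep an eye on are the complex conjugate in $(\cdot,\cdot)$ (harmless, since the symbols are real) and the low-regularity interpretation of the pairings described above.
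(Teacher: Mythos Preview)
Your argument is correct and is exactly the standard proof: diagonalize in the Fourier basis, use Parseval, and invoke the multiplicativity $(k^2+l^2)^{s+r}=(k^2+l^2)^s(k^2+l^2)^r$ of the symbol; your remark about interpreting the left-hand pairing as a duality pairing (and checking absolute convergence by Cauchy--Schwarz) is the only subtlety, and you handle it properly. Note, however, that the paper does not supply its own proof of this lemma---it is quoted directly from \cite{Ainsworth2017}---so there is nothing in the paper to compare your approach against beyond observing that your Fourier-side computation is the natural (and essentially only) way to establish such an identity given the spectral definition of $(-\Delta)^{\alpha/2}$ adopted here.
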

	\renewcommand\thelemma{A.2}
	\begin{lemma}[\cite{Shen2011,Canuto2006}]\label{le3.1}
		Suppose that $u\in H_{per}^{r}(\Omega)$. Then the following estimate holds for all $0\leq s\leq r$,
		\begin{displaymath}
		\|u-P_{N}u\|_{s}\leq C_1N^{s-r}\|u\|_{r}.
		\end{displaymath}
	\end{lemma}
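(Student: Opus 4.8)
The statement is the classical aliasing-free spectral-truncation error bound (it is invoked in the proof of Theorem \ref{th2.1} to control the $L^2$- and, for $s$ slightly above $1$, the $L^\infty$-size of the projection errors), and the natural proof is a direct Fourier-coefficient computation. First I would make $P_N$ explicit: since the exponentials $\{e^{ikx_1+ilx_2}\}_{k,l\in\mathbb Z}$ are $L^2_{per}(\Omega)$-orthogonal and $X_N$ is spanned by those with $-N/2\le k,l\le N/2-1$, the $L^2$-orthogonal projection $P_N$ is exactly Fourier truncation, $P_Nu=\sum_{-N/2\le k,l\le N/2-1}\hat u_{kl}e^{ikx_1+ilx_2}$. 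Hence $u-P_Nu=\sum_{(k,l)\in\Lambda_N}\hat u_{kl}e^{ikx_1+ilx_2}$ with $\Lambda_N=\{(k,l)\in\mathbb Z^2:\ |k|\ge N/2\ \text{or}\ |l|\ge N/2\}$, and by Parseval together with the definition of $\|\cdot\|_s$,
\[
\|u-P_Nu\|_s^2=\sum_{(k,l)\in\Lambda_N}|\hat u_{kl}|^2\,(1+k^2+l^2)^s .
\]

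The crux is a one-line bound on the tail weights. For $(k,l)\in\Lambda_N$ one has $k^2+l^2\ge N^2/4$, hence $1+k^2+l^2\ge N^2/4$; since $s-r\le 0$ the map $t\mapsto t^{s-r}$ is nonincreasing on $(0,\infty)$, so
\[
(1+k^2+l^2)^s=(1+k^2+l^2)^{s-r}(1+k^2+l^2)^r\le (N^2/4)^{s-r}(1+k^2+l^2)^r=4^{r-s}N^{2(s-r)}(1+k^2+l^2)^r
\]
for every $(k,l)\in\Lambda_N$. Substituting this and then enlarging the summation index set from $\Lambda_N$ to all of $\mathbb Z^2$,
\[
\|u-P_Nu\|_s^2\le 4^{r-s}N^{2(s-r)}\sum_{(k,l)\in\mathbb Z^2}|\hat u_{kl}|^2\,(1+k^2+l^2)^r=4^{r-s}N^{2(s-r)}\|u\|_r^2,
\]
and taking square roots gives $\|u-P_Nu\|_s\le C_1N^{s-r}\|u\|_r$ with $C_1=2^{r-s}$, a constant depending only on $r$ (and independent of $N$), which is precisely the claim.

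There is no genuine obstacle here. The only two points that need checking are: (i) that an index excluded from $X_N$ really satisfies $|k|\ge N/2$ — i.e. $k\in\mathbb Z\setminus[-N/2,N/2-1]\Rightarrow|k|\ge N/2$, which holds for even $N$; for odd $N$ one only gets $k^2+l^2\ge cN^2$ with a slightly smaller constant $c>0$, affecting only $C_1$; and (ii) the monotonicity step, which uses exactly the hypothesis $s\le r$. Equivalently, the bound is the specialization to the $2\pi$-periodic Fourier setting of the general spectral/polynomial approximation theory in \cite{Shen2011,Canuto2006}, and one may simply cite those references.
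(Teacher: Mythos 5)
Your proof is correct. The paper itself gives no proof of this lemma --- it is quoted directly from \cite{Shen2011,Canuto2006} --- and your direct Parseval computation (identify $P_N$ with Fourier truncation, bound the tail weights via $k^2+l^2\geq N^2/4$ on the complement of the index set and the monotonicity of $t\mapsto t^{s-r}$ for $s\leq r$, then enlarge the sum) is exactly the standard argument in those references, complete with the correct caveats about the parity of $N$ and where the hypothesis $s\leq r$ is used. Nothing further is needed.
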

	\renewcommand\thelemma{A.3}
	\begin{lemma}[\cite{Adams2003}]\label{le3.6}
		Let $\Omega$ be a domain in $\mathbb{R}^{n}$ satisfying the cone condition. If $mp>n$, let $p\leq q \leq \infty$; if $mp=n$, let $p\leq q < \infty$; if $mp<n$, let $p\leq q \leq q^{*}=np/(n-mp)$. Then there exists a constant $C_2$ depending on $m$, $n$, $p$, $q$, and the dimensions of $\Omega$ such that for all $u\in W^{m,p}(\Omega)$,
		\begin{displaymath}\label{eq2.133}
		\|u\|_{q}\leq C_2 \|u\|_{m,p}^{\varepsilon}\|u\|_{p}^{1-\varepsilon},
		\end{displaymath}
		where $\varepsilon=(n/mp)-(n/mq)$.
	\end{lemma}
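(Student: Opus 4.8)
This is the Gagliardo--Nirenberg--Sobolev interpolation inequality, and the plan is to derive it by chaining the classical Sobolev embedding theorem with an elementary H\"older interpolation in the $L^{r}$--scale; the cone condition on $\Omega$ is used only to make the embedding step available on a general domain.

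\textbf{Step 1 (endpoint embedding).} Choose the top exponent $r^{*}$ as follows: $r^{*}=np/(n-mp)$ if $mp<n$; $r^{*}$ an arbitrary finite exponent with $r^{*}\ge q$ if $mp=n$; and $r^{*}=\infty$ if $mp>n$, in which last case I also record the borderline estimate $\|u\|_{\infty}\le C\|u\|_{m,p}^{n/(mp)}\|u\|_{p}^{1-n/(mp)}$. In each case the Sobolev embedding $W^{m,p}(\Omega)\hookrightarrow L^{r^{*}}(\Omega)$ holds on domains with the cone condition; its proof covers $\Omega$ by finitely many pieces on each of which $\Omega$ locally contains a fixed cone, reduces the estimate on each piece to one on a cone and then to a ball or to $\mathbb{R}^{n}$, and invokes the Gagliardo--Nirenberg--Sobolev inequality on $\mathbb{R}^{n}$ --- the latter obtained by the iterated one-dimensional integration argument applied to a suitable power of $|u|$ when $m=1$, and by induction on $m$ in general.

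\textbf{Step 2 (interpolation and the exponent).} Since $p\le q\le r^{*}$ (with $r^{*}=\infty$ permitted when $mp>n$), define $\theta\in[0,1]$ by $\tfrac1q=\tfrac{\theta}{p}+\tfrac{1-\theta}{r^{*}}$; H\"older's inequality gives $\|u\|_{q}\le\|u\|_{p}^{\theta}\|u\|_{r^{*}}^{1-\theta}$. Substituting Step 1 yields $\|u\|_{q}\le C\|u\|_{p}^{\theta}\|u\|_{m,p}^{1-\theta}$ when $mp\le n$ and $\|u\|_{q}\le C\|u\|_{p}^{\,\theta+(1-\theta)(1-n/(mp))}\|u\|_{m,p}^{(1-\theta)n/(mp)}$ when $mp>n$. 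It then remains to verify that the power of $\|u\|_{m,p}$ equals $\varepsilon=n/(mp)-n/(mq)$: for $mp<n$, $\tfrac1{r^{*}}=\tfrac1p-\tfrac mn$ gives $1-\theta=\tfrac nm(\tfrac1p-\tfrac1q)=\varepsilon$; for $mp=n$ the same computation gives $1-\theta\to1-p/q=\varepsilon$ as $r^{*}\to\infty$, so the critical case is recovered by a limiting argument; and for $mp>n$, $(1-\theta)\tfrac{n}{mp}=(1-\tfrac pq)\tfrac{n}{mp}=\tfrac{n}{mp}-\tfrac{n}{mq}=\varepsilon$, the exponent of $\|u\|_{p}$ being then automatically $1-\varepsilon$. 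This gives the stated inequality with $C_{2}$ depending on $m,n,p,q$ and the dimensions of $\Omega$.

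\textbf{Main obstacle.} The real content is the endpoint embedding in Step 1: on a domain satisfying only the cone condition one cannot in general extend functions to $\mathbb{R}^{n}$ while separately controlling the $L^{p}$ and $W^{m,p}$ norms (a simultaneously bounded extension operator requires a Lipschitz-type hypothesis, stronger than the cone condition), so the covering/localization argument must be carried out by hand; and at the critical exponent $mp=n$ no finite $r^{*}$ reproduces the exact weight $\varepsilon$, which forces the limiting step together with a check that the constant remains bounded in the limit. For the use of this lemma in Appendix A, however, only the case $n=2$, $m=1$, $p=2$, $q=4$ is needed --- the Ladyzhenskaya inequality $\|v\|_{L^{4}}\le C\|v\|_{1}^{1/2}\|v\|^{1/2}$, which follows immediately from $H^{1}(\Omega)\hookrightarrow L^{q}(\Omega)$ for every finite $q$ on the rectangle $\Omega$ --- and there the whole argument is elementary.
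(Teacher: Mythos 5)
The paper offers no proof of this lemma: it is quoted verbatim (as the interpolation form of the Sobolev embedding theorem, Theorem 5.8) from Adams and Fournier \cite{Adams2003}, so there is no in-paper argument to compare yours against; I can only assess your sketch on its own terms. Your Steps 1--2 do give a correct derivation in the two non-critical regimes: for $mp<n$ the computation $1-\theta=\tfrac nm\left(\tfrac1p-\tfrac1q\right)=\varepsilon$ is right, and for $mp>n$ the bookkeeping with the multiplicative bound $\|u\|_{\infty}\le C\|u\|_{m,p}^{n/(mp)}\|u\|_{p}^{1-n/(mp)}$ also closes --- though that multiplicative $L^{\infty}$ estimate is itself the hard content of the lemma in this regime (it is not the bare embedding $W^{m,p}\hookrightarrow L^{\infty}$), so Step 1 there assumes a statement of essentially the same depth as the conclusion.

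The genuine gap is the critical case $mp=n$, and your own limiting argument does not close it. For any finite $r^{*}$ the interpolation exponent is $1-\theta=\left(\tfrac1p-\tfrac1q\right)/\left(\tfrac1p-\tfrac1{r^{*}}\right)>1-\tfrac pq=\varepsilon$, so the inequality you obtain places \emph{more} weight on the large norm $\|u\|_{m,p}$ than the target does; since $\|u\|_{p}\le\|u\|_{m,p}$, it is a strictly weaker statement and does not imply the lemma for any finite $r^{*}$. Passing $r^{*}\to\infty$ would require the constants in $W^{m,p}\hookrightarrow L^{r^{*}}$ to remain bounded, and they do not: they grow like a power of $r^{*}$, and uniform boundedness would yield $W^{m,n/m}\hookrightarrow L^{\infty}$, which is false. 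Adams and Fournier avoid this by proving the multiplicative inequality directly through the iterated one-dimensional Gagliardo--Nirenberg integration argument rather than by post-hoc H\"older interpolation, so the critical case needs that separate argument, not a limit. Finally, your closing remark misidentifies how the lemma is actually used in Appendix A: the paper invokes it with $m=\alpha$, $p=n=2$, $q=\infty$ (so $\varepsilon=1/\alpha$), i.e., the two-dimensional bound $\|v\|_{\infty}\le C\|v\|_{\alpha}$ for $\alpha>1$, not the Ladyzhenskaya $L^{4}$ inequality --- and it is applied with fractional $m$, which the integer-order statement you set out to prove does not literally cover.
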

	
	\renewcommand\thelemma{A.4}
	\begin{lemma}[\cite{Ainsworth2017}]\label{le3.4}
		If $s\leq r$ and $u\in H_{per}^{r}(\Omega)$, then
		\begin{displaymath}\label{eq2.12}
		\|(-\Delta)^{s}u\|\leq\|(-\Delta)^{r}u\|.
		\end{displaymath}
	\end{lemma}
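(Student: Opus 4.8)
The plan is to pass to Fourier space and reduce the claimed inequality to an elementary mode-by-mode comparison of the Fourier symbols. First I would write both sides using the spectral definition of the fractional Laplacian together with Parseval's identity, so that (up to the common normalization constant, which cancels from both sides)
\[
\|(-\Delta)^{s}u\|^{2}=\sum_{k,l\in\mathbb{Z}}(k^{2}+l^{2})^{2s}\,|\hat{u}_{kl}|^{2},
\qquad
\|(-\Delta)^{r}u\|^{2}=\sum_{k,l\in\mathbb{Z}}(k^{2}+l^{2})^{2r}\,|\hat{u}_{kl}|^{2}.
\]
It therefore suffices to compare these two nonnegative series term by term.

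Next I would invoke the standing assumption $\hat{u}_{00}=0$ imposed earlier in the paper, so that only indices with $(k,l)\neq(0,0)$ contribute to either sum. For integers $k,l$ not both zero one has $k^{2}+l^{2}\geq1$, and since $s\leq r$ gives $2s\leq 2r$, raising a number that is at least $1$ to a larger exponent can only increase it; hence $(k^{2}+l^{2})^{2s}\leq(k^{2}+l^{2})^{2r}$ for every admissible mode. Summing this inequality over all $(k,l)\neq(0,0)$ and taking square roots yields the stated bound $\|(-\Delta)^{s}u\|\leq\|(-\Delta)^{r}u\|$.

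I do not expect any genuine obstacle here: the whole argument is a single Parseval computation followed by the monotonicity of $t\mapsto t^{p}$ on $[1,\infty)$. The one point that requires care — and indeed the reason the lemma holds as stated — is the treatment of the constant mode. Without the hypothesis $\hat{u}_{00}=0$ the inequality can fail: taking $s=0$ and $r>0$ for a function with nonzero mean, $(-\Delta)^{r}$ annihilates the constant mode whereas $(-\Delta)^{0}=I$ retains it, so the right-hand side can be made strictly smaller than the left. The assumption $\hat{u}_{00}=0$ removes precisely the degenerate mode at the origin (where $k^{2}+l^{2}=0$), after which every remaining symbol value is $\geq1$ and the monotonicity in the exponent applies uniformly across the entire spectrum.
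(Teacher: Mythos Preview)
Your argument is correct and is exactly the standard spectral proof: expand in Fourier modes, apply Parseval, use $\hat{u}_{00}=0$ so that every surviving mode has $k^{2}+l^{2}\geq1$, and then invoke monotonicity of $t\mapsto t^{p}$ on $[1,\infty)$. Your remark about why the zero-mean assumption is essential is also accurate.

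The paper itself does not prove this lemma; it simply quotes it from \cite{Ainsworth2017}. Your Parseval-plus-monotonicity argument is precisely the natural proof one would give for such a spectral inequality, so there is nothing to compare beyond noting that you have supplied the short justification the paper omits.
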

	\renewcommand\thelemma{A.5}
	\begin{lemma}[\cite{Ainsworth2017}]\label{le3.5}
		If $u,v\in H_{per}^{2s}(\Omega)$ for all $s\geq0$, then
		\begin{displaymath}\label{eq2.13}
		\|(-\Delta)^{s}(uv)\|^{2}\leq C_{s}\left[\|v\|_{\infty}^{2}\|(-\Delta)^{s}u\|^{2}+ \|(-\Delta)^{s}v\|_{\infty}^{2}\|u\|^{2}\right],
		\end{displaymath}where $C_{s}=\max{\{1,2^{2s-1}\}}$.
	\end{lemma}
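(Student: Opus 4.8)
The plan is to pass to Fourier coefficients and reduce the estimate to one elementary inequality for the symbol of $(-\Delta)^s$. With $\mathbf{k}=(k,l)\in\mathbb{Z}^2$ and $|\mathbf{k}|^2=k^2+l^2$ we have $\widehat{(-\Delta)^s w}_{\mathbf{k}}=|\mathbf{k}|^{2s}\hat w_{\mathbf{k}}$, while the Fourier coefficients of a product form the discrete convolution $\widehat{uv}_{\mathbf{k}}=\sum_{\mathbf{m}\in\mathbb{Z}^2}\hat u_{\mathbf{m}}\hat v_{\mathbf{k}-\mathbf{m}}$; hence $\widehat{(-\Delta)^s(uv)}_{\mathbf{k}}=|\mathbf{k}|^{2s}\sum_{\mathbf{m}}\hat u_{\mathbf{m}}\hat v_{\mathbf{k}-\mathbf{m}}$, and by Parseval $\|(-\Delta)^s(uv)\|^2$ equals a fixed constant times $\sum_{\mathbf{k}}|\mathbf{k}|^{4s}|\widehat{uv}_{\mathbf{k}}|^2$.

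\textbf{The key inequality.} The heart of the matter is the elementary bound, valid for all $\mathbf{m},\mathbf{n}\in\mathbb{Z}^2$,
\begin{displaymath}
|\mathbf{m}+\mathbf{n}|^{2s}\le\big(|\mathbf{m}|+|\mathbf{n}|\big)^{2s}\le C_s\big(|\mathbf{m}|^{2s}+|\mathbf{n}|^{2s}\big),\qquad C_s=\max\{1,2^{2s-1}\},
\end{displaymath}
which follows from the subadditivity of $t\mapsto t^{2s}$ when $0\le 2s\le 1$ and from the convexity estimate $\big((a+b)/2\big)^{2s}\le\tfrac12\big(a^{2s}+b^{2s}\big)$ when $2s\ge 1$; this produces exactly the constant $C_s$ that appears in the lemma. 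Applying it with $\mathbf{n}=\mathbf{k}-\mathbf{m}$ termwise inside the convolution yields
\begin{displaymath}
\big|\widehat{(-\Delta)^s(uv)}_{\mathbf{k}}\big|\le C_s\Big(\sum_{\mathbf{m}}\big|\widehat{(-\Delta)^s u}_{\mathbf{m}}\big|\,|\hat v_{\mathbf{k}-\mathbf{m}}|+\sum_{\mathbf{m}}|\hat u_{\mathbf{m}}|\,\big|\widehat{(-\Delta)^s v}_{\mathbf{k}-\mathbf{m}}\big|\Big)=:C_s\big((P*Q)_{\mathbf{k}}+(R*S)_{\mathbf{k}}\big),
\end{displaymath}
with $P_{\mathbf{m}}=|\widehat{(-\Delta)^s u}_{\mathbf{m}}|$, $Q_{\mathbf{p}}=|\hat v_{\mathbf{p}}|$, $R_{\mathbf{m}}=|\hat u_{\mathbf{m}}|$, $S_{\mathbf{p}}=|\widehat{(-\Delta)^s v}_{\mathbf{p}}|$.

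\textbf{Estimating the two pieces.} Now $P*Q$ is the Fourier-coefficient sequence of the product of the functions $\widetilde U:=\sum_{\mathbf{m}}P_{\mathbf{m}}e^{i\mathbf{m}\cdot x}$ and $\widetilde V:=\sum_{\mathbf{p}}Q_{\mathbf{p}}e^{i\mathbf{p}\cdot x}$, so by Parseval and $\|fg\|\le\|g\|_\infty\|f\|$,
\begin{displaymath}
\|P*Q\|_{\ell^2}=\|\widetilde U\widetilde V\|\le\|\widetilde V\|_\infty\|\widetilde U\|=\|\widetilde V\|_\infty\,\|(-\Delta)^s u\|,
\end{displaymath}
and symmetrically $\|R*S\|_{\ell^2}\le\|\widetilde S\|_\infty\,\|u\|$ with $\widetilde S:=\sum_{\mathbf{p}}S_{\mathbf{p}}e^{i\mathbf{p}\cdot x}$. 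Summing, squaring, and absorbing the factor from $(a+b)^2\le 2a^2+2b^2$ into $C_s$ (permissible since $C_s\ge 1$) gives the asserted inequality, provided $\|\widetilde V\|_\infty$ and $\|\widetilde S\|_\infty$ are replaced by $\|v\|_\infty$ and $\|(-\Delta)^s v\|_\infty$.

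\textbf{The main obstacle.} That last replacement is where the real difficulty sits: the crude ``absolute values inside the convolution, then Young's inequality'' step actually yields the Wiener-algebra norms $\|\widetilde V\|_\infty=\sum_{\mathbf{p}}|\hat v_{\mathbf{p}}|$ and $\|\widetilde S\|_\infty=\sum_{\mathbf{p}}|\widehat{(-\Delta)^s v}_{\mathbf{p}}|$, which dominate $\|v\|_\infty$ and $\|(-\Delta)^s v\|_\infty$ but are strictly larger in general. To land on the $L^\infty$-norms exactly as stated one must instead decompose $uv$ by a dyadic (Littlewood--Paley) frequency localization into low--high, high--low, and high--high interactions: in the low--high and high--low pieces the operator $(-\Delta)^s$ effectively acts on the factor whose frequency is comparable to $\mathbf{k}$, and the remaining factor is controlled in $L^\infty$ through a Bernstein/maximal-function estimate, while the high--high piece is the genuine endpoint term handled by a square-function argument; this is precisely the fractional Leibniz (Kato--Ponce) rule at the $L^\infty$ endpoint. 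Carrying that decomposition through while retaining the explicit constant $C_s$ (which still comes from the same power inequality at the core) is the crux; the remaining steps are routine Parseval and Hölder bookkeeping, with Lemmas \ref{le3.3} and \ref{le3.4} available to legitimize the manipulations of fractional powers.
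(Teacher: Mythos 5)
The paper does not actually prove Lemma \ref{le3.5}: it is imported verbatim from \cite{Ainsworth2017} and used as a black box in the estimate \eqref{e3.30}, so there is no in-paper argument to compare yours against. Judged on its own terms, your proposal is not a proof, and you essentially say so yourself. The parts you carry out are correct: the reduction to Fourier coefficients, the convolution formula for $\widehat{uv}_{\mathbf{k}}$, and the power inequality $(|\mathbf{m}|+|\mathbf{n}|)^{2s}\le C_s\bigl(|\mathbf{m}|^{2s}+|\mathbf{n}|^{2s}\bigr)$ with exactly the constant $C_s=\max\{1,2^{2s-1}\}$ of the statement. But the moment you apply that inequality ``termwise inside the convolution'' you must put absolute values on every Fourier coefficient, and from then on the best you can recover for the low-order factor is the Wiener-algebra norm $\sum_{\mathbf{p}}|\hat v_{\mathbf{p}}|$ (respectively $\sum_{\mathbf{p}}|\mathbf{p}|^{2s}|\hat v_{\mathbf{p}}|$), not $\|v\|_{\infty}$ (respectively $\|(-\Delta)^{s}v\|_{\infty}$). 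You diagnose this correctly, but then only gesture at the Littlewood--Paley/Kato--Ponce machinery that would close the gap without executing any of it; and, as you concede, that route does not obviously return the explicit constant $C_s$ --- indeed even your own final step yields $2C_s^{2}$ rather than $C_s$, since the factor from $(a+b)^2\le 2a^2+2b^2$ cannot be ``absorbed'' downward into a constant that equals $1$ when $2s\le 1$. Declaring the remaining steps ``routine'' after identifying them as the crux is precisely what a gap looks like.

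Concretely, the missing ingredient is any mechanism for exploiting cancellation in the convolution sum. The exact identity $\sum_{\mathbf{m}}|\mathbf{m}|^{2s}\hat u_{\mathbf{m}}\hat v_{\mathbf{k}-\mathbf{m}}=\widehat{\,((-\Delta)^{s}u)\,v\,}_{\mathbf{k}}$, combined with Parseval and H\"older, is what produces the genuine $L^{\infty}$ norms, giving $\|((-\Delta)^{s}u)v\|\le\|v\|_{\infty}\|(-\Delta)^{s}u\|$; but one cannot pass from the scalar bound $|\mathbf{k}|^{2s}\le C_s\bigl(|\mathbf{m}|^{2s}+|\mathbf{k}-\mathbf{m}|^{2s}\bigr)$ to the bound $\bigl|\sum_{\mathbf{m}}|\mathbf{k}|^{2s}\hat u_{\mathbf{m}}\hat v_{\mathbf{k}-\mathbf{m}}\bigr|\le C_s\bigl|\sum_{\mathbf{m}}(|\mathbf{m}|^{2s}+|\mathbf{k}-\mathbf{m}|^{2s})\hat u_{\mathbf{m}}\hat v_{\mathbf{k}-\mathbf{m}}\bigr|$ for complex coefficients, which is the (invalid) shortcut that would be needed to marry the two ingredients. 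If your goal is only to support the use of the lemma in \eqref{e3.30}, the honest options are either to prove and use the weaker Wiener-norm version you actually obtained (at the price of strengthening the regularity hypotheses on $G'(\xi)$ in the appendix) or to take the statement from \cite{Ainsworth2017} as the paper does, rather than presenting the reconstruction above as a proof.
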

	
	\renewcommand\thelemma{A.6}
	\begin{lemma}[\cite{Zeng2014}]\label{inverse}
		For any $u\in X_N$, the following inverse inequality holds,
		\begin{equation*}
		\|u\|_{L^{\infty}}\leq C_3N\|u\|.
		\end{equation*}
	\end{lemma}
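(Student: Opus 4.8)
The plan is to argue entirely in Fourier space, since every $u\in X_N$ is a trigonometric polynomial with only finitely many modes, and the claimed bound is nothing more than a comparison between the $\ell^1$ and $\ell^2$ norms of its coefficient vector. First I would write
$$u(x)=\sum_{-N/2\le k,l\le N/2-1}\hat{u}_{kl}\,e^{ikx_1+ilx_2},$$
and record that this is a sum over exactly $N^2$ modes, because each of $k$ and $l$ ranges over $N$ consecutive integers. This mode count is the only quantitative input that produces the factor $N$ in the final estimate.

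For the pointwise bound I would use the triangle inequality together with $|e^{i(kx_1+lx_2)}|=1$ to get, for every $x\in\Omega$,
$$|u(x)|\le\sum_{-N/2\le k,l\le N/2-1}|\hat{u}_{kl}|,$$
whence $\|u\|_{L^\infty}\le\sum_{k,l}|\hat{u}_{kl}|$. The next step is Cauchy--Schwarz on this finite sum,
$$\sum_{k,l}|\hat{u}_{kl}|\le\Big(\sum_{k,l}1\Big)^{1/2}\Big(\sum_{k,l}|\hat{u}_{kl}|^2\Big)^{1/2},$$
where the first factor equals $(N^2)^{1/2}=N$ by the mode count above.

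Finally I would invoke Parseval's identity: with the normalization fixed by the definition of $\hat{u}_{kl}$ used in the paper, the $L^2_{per}$ norm satisfies $\|u\|^2=\sum_{k,l}|\hat{u}_{kl}|^2$, which is exactly the $r=0$ case of the norm $\|\cdot\|_r$ introduced earlier. Chaining the three displays yields $\|u\|_{L^\infty}\le N\|u\|$, i.e. the claim (one may in fact take $C_3=1$). The only point requiring any care --- and it is bookkeeping rather than a genuine obstacle --- is to confirm that the index set contributes precisely $N^2$ terms, so that the counting factor is $N$ and not, say, $N+1$, and to check that the constant in the definition of $\hat{u}_{kl}$ is consistent with the Parseval identity for $\|\cdot\|$. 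No interpolation, Sobolev embedding, or genuine inverse estimate enters; the inequality is a purely algebraic consequence of the finite dimensionality of $X_N$ and the orthonormality of the exponential basis.
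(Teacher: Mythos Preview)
Your argument is correct and is the standard way to establish this inverse inequality. The paper does not actually prove Lemma~A.6; it simply cites it from \cite{Zeng2014}, so there is no ``paper's own proof'' to compare against. Your route---triangle inequality, Cauchy--Schwarz over the $N^2$ modes, Parseval---is exactly the expected one.

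One small remark on the normalization you flagged: with the convention $\hat{u}_{kl}=(2\pi)^{-2}\int_\Omega u\,e^{-i(kx_1+lx_2)}\,dx$ used in the paper, Parseval reads $\|u\|_{L^2}^2=(2\pi)^2\sum_{k,l}|\hat{u}_{kl}|^2$, so your chain actually gives $\|u\|_{L^\infty}\le N\|u\|_{L^2}/(2\pi)$, i.e.\ $C_3=1/(2\pi)$ rather than $1$. This only sharpens the constant and does not affect the validity of the lemma or your proof.
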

	
	We assume that the  solution $u$ of \eqref{e1.1}  satisfies
	\begin{equation}\label{e2.5}	\|u_{0}\|_{H^{r}_{per}}+\|u\|_{C(I;H^{r}_{per})}+\|u_{t}\|_{C(I;H^{r}_{per})}
	+\|u_{tt}\|_{C(I;H^{\alpha}_{per})}\leq M,
	\end{equation}
	where $I=(0,T]$, $M$ is a constant independent of $n$, $N$, and $\tau$. The above regularity assumption is used in the convergence analysis.
	
	We introduce a time-discrete system,
	\begin{equation}\label{e3.1}\left\{\begin{aligned}	&D_{2}U^{n}+K_{\alpha}(-\Delta)^{\alpha/2}U^n=2G(U^{n-1})-G(U^{n-2})-\tau\kappa(D_1U^n-D_1U^{n-1},v),\\
	&U^0=u^0,{\quad} U^1=u^1.
	\end{aligned}\right.\end{equation}
	Thus, we split the error $u^n-U_N^n$ into two parts, that is,
	$u^{n}-U_{N}^{n} = (u^{n}-U^{n})+(U^{n}-U_{N}^{n})$.
	The boundedness of $u^n-U_N^n$ can be obtained by estimating $u^{n}-U^{n}$
	and $U^{n}-U_{N}^{n}$ separately.
	
	Denote by $e^{n}=u^{n}-U^{n}$. Combining \eqref{e3.4} and \eqref{e3.1} yields
	\begin{equation}\label{e3.10}
	D_{2}e^{n}+K_{\alpha}(-\Delta)^{\alpha/2}e^n=R^{n}+F_{1}^{n-1}
	-\tau\kappa( D_1e^n-D_1e^{n-1}),{\quad}2\leq n \leq K,
	\end{equation}
	where
	\begin{equation}\label{e3.11}
	F_{1}^{n-1}=2G(u^{n-1})-G(u^{n-2})-(2G(U^{n-1})-G(U^{n-2}).
	\end{equation}
	
	Define
	\begin{equation}
	\label{e3.15}
	M_{1}=\underset{1\leq n\leq K}{\max}\|u^{n}\|_{\infty}+4.
	\end{equation}According to (\ref{e2.5}) and (\ref{e3.15}), we can find that $M_{1}$ is a positive constant independent of $n$, $\tau$, and $N$.
	
	We have the following theorem for the time-discrete system \eqref{e3.1}.
	\renewcommand\thetheorem{A.1}
	\begin{theorem} \label{th3.1}
		Suppose that \eqref{e1.1} has a unique solution $u(\cdot,t)\in H^{\alpha}_{per}(\Omega)$
		satisfying \eqref{e2.5}. Then, the time-discrete system \eqref{e3.1} has a unique solution $U^{n}$.
		If $U^n\in H^{\alpha}_{per}(\Omega)$, then
		there exists a positive constant $\tau_{1}^{*}>0$,
		when $\tau\leq \tau_{1}^{*}$, it holds,
		\begin{equation}
		\label{e3.24}
		\|e^{n}\|_{\alpha}\leq \tau,
		\end{equation}
		\begin{equation}
		\|U^{n}\|_{\infty}\leq 2M_{1}.\label{e3.25}
		\end{equation}
	\end{theorem}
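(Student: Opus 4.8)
The plan is to prove \eqref{e3.24} and \eqref{e3.25} simultaneously by induction on $n$, following the standard energy method for the stabilized second-order backward difference scheme. First I would verify the base cases: since $U^0=u^0$ and $U^1=u^1$, we have $e^0=e^1=0$, so \eqref{e3.24} holds trivially for $n=0,1$, and \eqref{e3.25} for $n\le 1$ follows from the definition \eqref{e3.15} of $M_1$ (indeed $\|U^n\|_\infty = \|u^n\|_\infty \le M_1 - 4 < 2M_1$). For the inductive step, assume \eqref{e3.24}--\eqref{e3.25} hold up to level $n-1$; I would establish them at level $n$.

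The core estimate is an energy argument applied to the error equation \eqref{e3.10}. I would test \eqref{e3.10} with $D_2 e^n$ (or, more standard for BDF2, pair it against a suitable combination such as $e^n - \tfrac{1}{3} e^{n-1}$, or use the $2 D_2 e^n$ multiplier that produces a telescoping "$G$-norm"). Using the identity $D_2 u^n = \tfrac{3}{2}D_1 u^n - \tfrac{1}{2}D_1 u^{n-1}$ and the algebraic relation $\tau\kappa(D_1 e^n - D_1 e^{n-1}) = \kappa(e^n - 2e^{n-1} + e^{n-2})$, the stabilization term contributes a nonnegative quantity plus a telescoping difference, while $K_\alpha((-\Delta)^{\alpha/2}e^n, D_2 e^n)$ yields control of $\|e^n\|_\alpha^2$ up to telescoping terms via Lemma \ref{le3.3}. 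The right-hand side terms are handled as follows: $\|R^n\| = O(\tau^2)$ by the consistency assumption \eqref{e2.5} (the bound on $u_{tt}$ in $H^\alpha_{per}$ is precisely what is needed); and for $F_1^{n-1}$ from \eqref{e3.11}, I would write $2G(u^{n-1}) - 2G(U^{n-1}) = 2G'(\xi)e^{n-1}$ via the mean value theorem, where $|\xi| \le \max(\|u^{n-1}\|_\infty, \|U^{n-1}\|_\infty) \le 2M_1$ by the induction hypothesis \eqref{e3.25}, so $\|F_1^{n-1}\| \le C(\|e^{n-1}\| + \|e^{n-2}\|)$ with $C$ depending only on $\sup_{|z|\le 2M_1}|G'(z)|$. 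Summing the energy inequality from level $2$ to level $n$, using the vanishing initial errors, and applying the discrete Grönwall inequality gives $\|e^n\|_\alpha^2 \le C\tau^4$, which after possibly shrinking $\tau_1^*$ yields $\|e^n\|_\alpha \le \tau$, i.e. \eqref{e3.24}.

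With \eqref{e3.24} in hand, I would recover \eqref{e3.25} at level $n$ by the embedding: since we are in two space dimensions and $\alpha > 1$, $H^\alpha_{per}(\Omega) \hookrightarrow L^\infty(\Omega)$ (this is Lemma \ref{le3.6} with $m = \alpha$, $n = 2$, $p = 2$, noting $mp = 2\alpha > 2 = n$), so $\|e^n\|_\infty \le C\|e^n\|_\alpha \le C\tau$. Hence $\|U^n\|_\infty \le \|u^n\|_\infty + \|e^n\|_\infty \le (M_1 - 4) + C\tau \le 2M_1$ once $\tau \le \tau_1^*$ is small enough that $C\tau \le 4$. This closes the induction. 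Existence and uniqueness of $U^n$ at each step is separate and easier: \eqref{e3.1} is a linear elliptic problem for $U^n$ (the operator $(\tfrac{3}{2\tau} + \kappa/\tau)I + K_\alpha(-\Delta)^{\alpha/2}$ is positive definite on $H^\alpha_{per}$ with $\hat u_{00} = 0$), so Lax--Milgram applies.

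The main obstacle I anticipate is the energy estimate for the BDF2 operator: unlike backward Euler, $D_2 e^n$ paired against itself does not immediately telescope, and one must either invoke the known $G$-stability of BDF2 (the Dahlquist/Nevanlinna--Odeh multiplier technique, giving a positive definite quadratic form $\|e^n\|_G^2$ equivalent to $\|e^n\|^2 + \|e^{n-1}\|^2$ that decreases up to forcing) or carefully track the stabilization term's sign to absorb the indefinite cross terms. Making the constant in the Grönwall step genuinely independent of $\tau$ and $N$ — and ensuring the stabilization parameter $\kappa \ge 0$ does not spoil the telescoping — is where the bookkeeping is delicate; everything else is routine once the right multiplier is chosen.
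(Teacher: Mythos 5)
Your overall architecture---induction coupling the $H^{\alpha}$ error bound with the $L^{\infty}$ bound on $U^{n}$, mean value theorem plus the inductive $L^\infty$ bound to control $F_1^{n-1}$, Gronwall, then the Sobolev embedding $H^{\alpha}_{per}\hookrightarrow L^{\infty}$ (valid in 2D precisely because $\alpha>1$) to close the loop---matches the paper. But there is a genuine gap in the core energy estimate: the multiplier you propose does not produce the norm the theorem asserts. Testing \eqref{e3.10} with $D_2e^n$ (or with the Nevanlinna--Odeh combination $e^n-\tfrac13 e^{n-1}$, or $2D_2e^n$) turns the diffusion term, via Lemma \ref{le3.3}, into $K_\alpha((-\Delta)^{\alpha/4}e^n,(-\Delta)^{\alpha/4}D_2e^n)$, whose telescoping controls only the $H^{\alpha/2}$ seminorm $|e^n|_{\alpha/2}^2$, not $\|e^n\|_\alpha^2$ as you claim. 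This is fatal to the induction: since $1<\alpha\le 2$ gives $\alpha/2\le 1$, the space $H^{\alpha/2}_{per}$ does \emph{not} embed into $L^{\infty}$ in two dimensions, so an $H^{\alpha/2}$ bound on $e^n$ cannot be upgraded to $\|e^n\|_\infty\le C\tau$, and the inductive propagation of \eqref{e3.25} breaks down. The paper avoids this by testing with $2\tau(-\Delta)^{\alpha/2}D_1e^n$: then the time-difference term yields the telescoping quantities $\tfrac{\tau}{2}|D_1e^n|_{\alpha/2}^2$ and $\kappa\tau^2|D_1e^n|_{\alpha/2}^2$, while the diffusion term telescopes into $K_\alpha|e^n|_{\alpha}^2$, which is exactly the $H^{\alpha}$ control needed for \eqref{e3.24} and for the embedding.

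A second, related omission: with the stronger multiplier the nonlinear term must be estimated in the $H^{\alpha/2}$ norm, i.e.\ one needs $\|(-\Delta)^{\alpha/4}F_1^{n-1}\|^2\le C(|e^{n-1}|_\alpha^2+|e^{n-2}|_\alpha^2)$, which requires the fractional product estimate of Lemma \ref{le3.5} applied to $G'(\xi^{l})e^{l}$ together with the boundedness of $\xi^{l}$; your plain $L^2$ bound $\|F_1^{n-1}\|\le C(\|e^{n-1}\|+\|e^{n-2}\|)$ is consistent with your (too weak) energy identity but does not suffice for the argument that actually proves the theorem. The base cases, the Lax--Milgram remark on solvability, and the final embedding step are fine as written.
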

	\begin{proof}
		When $n=1$, $e^{1}=0$. The proof is obvious.
		
		Assume that (\ref{e3.24}) holds for $2\leq n\leq k-1$. By Lemma \ref{le3.6}, we have
		\begin{equation}\begin{aligned}
		\label{e3.26}
		\|U^{n}\|_{\infty}&\leq \|u^{n}\|_{\infty}+\|e^{n}\|_{\infty}
		\leq \|u^{n}\|_{\infty}+C_2\|e^{n}\|_{\alpha}\\
		&\leq \|u^{n}\|_{\infty}+C_2\tau
		\leq \|u^{n}\|_{\infty}+1\\
		&\leq M_{1},\\
		\end{aligned}\end{equation}	where $\tau\leq C_2^{-1}$.
		
		Next, we prove (\ref{e3.24}) for $n\leq k$. Multiplying (\ref{e3.10}) by
		$2\tau(-\Delta)^{\alpha/2}D_{1}e^{n}$ and integrating the resulting system
		over $\Omega$, we obtain
		\begin{equation}\label{e3.29}\begin{aligned}
		&\quad	\frac{\tau}{2}|D_{1}e^{n}|_{\alpha/2}^{2}+{K_{\alpha}}|e^{n}|^{2}_{\alpha}+\kappa\tau^2|D_1e^{n}|^{2}_{\alpha/2}\\
		&\leq\frac{\tau}{2}|D_{1}e^{n-1}|_{\alpha/2}^{2}+{K_{\alpha}}|e^{n-1}|^{2}_{\alpha}+\kappa\tau^2|D_1e^{n-1}|^{2}_{\alpha/2}
		+\tau|R^{n}|_{\alpha/2}^{2}+\tau\|(-\Delta)^{\alpha/4}F_{1}^{n-1}\|^{2},
		\end{aligned}\end{equation}
		where Lemma \ref{le3.3} and the Cauchy--Schwartz inequality is used. Summing $n$ in (\ref{e3.29}) from $2$ to $\hat{k}\,(2\leq\hat{k}\leq k)$ and using
		$e^0=e^1=0$ gives
		\begin{equation}\label{e3.29-2}\begin{aligned}
		\frac{\tau}{2}|D_{1}e^{\hat{k}}|_{\alpha/2}^{2}+{K_{\alpha}}|e^{\hat{k}}|^{2}_{\alpha}+\kappa\tau^2|D_1e^{\hat{k}}|^{2}_{\alpha/2}
		\leq&\tau\sum_{n=2}^{\hat{k}}
		\left(|R^{n}|_{\alpha/2}^{2}+\|(-\Delta)^{\alpha/4}F_{1}^{n-1}\|^{2}\right).	
		\end{aligned}\end{equation}
		
		Next, we estimate $\|(-\Delta)^{\alpha/4}F_{1}^{n-1}\|^{2}$.
		From Lemma \ref{le3.5}, \eqref{e3.26},  and the mean value theorem, we derive
		\begin{equation}\begin{aligned}\label{e3.30}
		&\|(-\Delta)^{\alpha/4}F_{1}^{n-1}\|^{2}=\|(-\Delta)^{\alpha/4}
		\left(2G'(\xi^{n-1})e^{n-1}-G'(\xi^{n-2})e^{n-2}\right)\|^{2}\\
		\leq& C_{\alpha}\left[\|G'(\xi^{n-1})\|_{\infty}^{2}\|(-\Delta)^{\alpha/4}e^{n-1}\|^{2}
		+\|e^{n-1}\|^{2}\|(-\Delta)^{\alpha/4}G'(\xi^{n-1})\|_{\infty}^{2}\right]\\
		&+C_{\alpha}\left[\|G'(\xi^{n-2})\|_{\infty}^{2}\|(-\Delta)^{\alpha/4}e^{n-2}\|^{2}
		+\|e^{n-2}\|^{2}\|(-\Delta)^{\alpha/4}G'(\xi^{n-2})\|_{\infty}^{2}\right]\\
		\leq& C_{4}(|e^{n-1}|_{\alpha}^{2}+|e^{n-2}|_{\alpha}^{2}),\\
		\end{aligned}\end{equation}
		where $\xi^{l}\in \left[\min\{u^{l},U^{l}\},\max\{u^{l},U^{l}\}\right]$,
		$|\xi^{l}|\leq \max{\{M,M_{1}\}}$, $l=n-2,n-1$.
		
		Combining \eqref{e3.29-2} and \eqref{e3.30} leads to
		\begin{equation*}\label{e3.31}	\begin{aligned}
		|e^{\hat{k}}|^{2}_{\alpha}
		&\leq \frac{1}{{K_{\alpha}}}
		\left(\frac{\tau}{2}|D_{1}e^{\hat{k}}|_{\alpha/2}^{2}+{K_{\alpha}}|e^{\hat{k}}|^{2}_{\alpha}+\kappa\tau^2|D_1e^{\hat{k}}|^{2}_{\alpha/2}\right)\\
		&\leq C_5(\tau^{4}+\tau\sum_{n=0}^{\hat{k}-1}|e^{n}|_{\alpha}^{2}),{\quad}2 \leq\hat{k}\leq k.
		\end{aligned}\end{equation*}
		By the Gronwall inequality, one has $|e^{\hat{k}}|_{\alpha}\leq C_6\tau^{2}$,
		which implies
		\begin{equation*}\label{e3.33}
		\|e^{\hat{k}}\|_{\alpha}\leq \tau,{\quad} 2 \leq\hat{k}\leq k,
		\end{equation*}
		when $\tau\leq (2C_{6})^{-1}$.
		
		If $\tau\leq C_2^{-1}$, then
		\begin{eqnarray*}
			\|U^{k}\|_{\infty}\leq \|u^{k}\|_{\infty}+C_2\|e^{k}\|_{\alpha}
			\leq \|u^{k}\|_{\infty}+C_2\tau\leq M_{1}.\label{e3.35}
		\end{eqnarray*}
		Taking $\tau_{1}^{*}=\min\{C_2^{-1},(2C_{6})^{-1}\}$, we complete the proof.	
	\end{proof}
	
	From Lemma \ref{le3.1} and Lemma \ref{le3.6} ($m=\alpha,p=n=2,q=\infty$), we have $\|P_{N}v\|_{\infty}\leq \widetilde{C}_1\|v\|_{\alpha}$ for any $v\in H^{\alpha}_{per}(\Omega)$. Therefore, we can obtain the boundedness of $\|P_{N}U^{n}\|_{\infty}$ for $n=1,2,\ldots,K$.  Define the following constant
	\begin{equation}
	\label{e3.36}
	M_{2}=\underset{1\leq n\leq K}{\max}\|P_{N}U^{n}\|_{\infty}+1.
	\end{equation}
	
	Let $\eta_{N}^{n}=P_{N}U^{n}-U^{n}_{N}$.
	From (\ref{e2.3}) and (\ref{e3.1}), we obtain
	\begin{equation}
	\label{e3.38}
	\mathcal{A}^n(\eta_N,v)=(F_{2}^{n-1},v)-\tau\kappa(D_1\eta_N^n-D_1\eta_N^{n-1},v),\ \forall v\in X_N,\ 2\leq n\leq K,
	\end{equation}
	where
	\begin{equation}\label{e3.39}
	F_{2}^{n-1}=2G(U^{n-1})-G(U^{n-2})-P_N(2G(U^{n-1}_{N})-G(U^{n-2}_{N})).
	\end{equation}
	
	The following Theorem gives a bound of $U_{N}^{n}$.
	\renewcommand\thetheorem{A.2}
	\begin{theorem} \label{th3.2}
		Suppose that  \eqref{e3.1} has a unique solution
		$U^n\in H^{\alpha}_{per}(\Omega)$
		and $U^{n}_{N}$ is the solution of  \eqref{e2.3},
		$0\leq n \leq K$. Then there  exists
		two positive constants $\widetilde{C}_{4}$ and $N_{1}^{*}$, when $N\geq N_{1}^{*}$, it holds
		\begin{align}
		&\|\eta_{N}^{n}\|\leq \widetilde{C}_{4}N^{-\alpha},	\label{e3.2222}\\
		&\|U^{n}_{N}\|_{\infty}\leq M_{2}.\label{e3.3333}
		\end{align}
	\end{theorem}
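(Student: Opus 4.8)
The plan is to prove \eqref{e3.2222} and \eqref{e3.3333} simultaneously by induction on $n$, mirroring the structure used for the time-discrete system in Theorem \ref{th3.1}. For $n=0,1$ we have $\eta_N^n = P_NU^n - U_N^n = 0$ by the choice of initial data, so the base case is trivial. Assume \eqref{e3.2222} holds for all indices up to $k-1$; then by Lemma \ref{le3.6} ($m=\alpha$, $p=n=2$, $q=\infty$) applied to $\eta_N^n\in X_N$ one gets $\|\eta_N^n\|_\infty \le C\|\eta_N^n\|_\alpha$, and I would first upgrade the $L^2$ bound on $\eta_N^n$ to an $H^\alpha$ bound (see below) to conclude $\|\eta_N^n\|_\infty \le C N^{\alpha}\cdot N^{-2\alpha}\to 0$, hence $\|U_N^n\|_\infty \le \|P_NU^n\|_\infty + \|\eta_N^n\|_\infty \le M_2$ for $N$ large, which gives \eqref{e3.3333} for $n\le k-1$; in particular the argument of $G$ stays in the region where $G'$ is bounded.

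The core energy estimate proceeds as in Theorem \ref{th3.1}: take $v = 2\tau(-\Delta)^{\alpha/2}D_1\eta_N^n \in X_N$ in \eqref{e3.38}, use Lemma \ref{le3.3} to move half the fractional power onto the test function, and telescope the backward-difference terms. Using the identity $2(a-b,a) = |a|^2 - |b|^2 + |a-b|^2$ type manipulations on $D_2\eta_N^n$, one obtains an inequality of the form
\begin{equation*}
\frac{\tau}{2}|D_1\eta_N^{\hat k}|_{\alpha/2}^2 + K_\alpha|\eta_N^{\hat k}|_\alpha^2 + \kappa\tau^2|D_1\eta_N^{\hat k}|_{\alpha/2}^2 \le \tau\sum_{n=2}^{\hat k}\|(-\Delta)^{\alpha/4}F_2^{n-1}\|^2,
\end{equation*}
for $2\le\hat k\le k$. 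The penalty term contributes a nonnegative quantity on the left, exactly as in \eqref{e3.29-2}, so it only helps. The remaining task is to bound $\|(-\Delta)^{\alpha/4}F_2^{n-1}\|^2$. I would split $F_2^{n-1}$ from \eqref{e3.39} as
\begin{equation*}
F_2^{n-1} = \bigl(2G(U^{n-1}) - G(U^{n-2})\bigr) - P_N\bigl(2G(U^{n-1}) - G(U^{n-2})\bigr) + P_N\bigl(2(G(U^{n-1})-G(U_N^{n-1})) - (G(U^{n-2})-G(U_N^{n-2}))\bigr).
\end{equation*}
The first difference is a projection error, controlled by Lemma \ref{le3.1} and the fact that $G(U^n)\in H^r_{per}$ (using $\|U^n\|_\infty$ bounded from Theorem \ref{th3.1} plus Lemma \ref{le3.5} to control $\|G(U^n)\|_r$), giving an $O(N^{-2\alpha})$ type bound after squaring, say $\|(-\Delta)^{\alpha/4}(I-P_N)(\cdots)\|^2 \le C N^{2\alpha}N^{-2r}$, which is $\le C N^{-2\alpha}$ provided $r\ge 2\alpha$ (the regularity index $r$ in the hypothesis is assumed large enough). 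For the second (consistency) part, stability of $P_N$ in $H^{\alpha/2}$, the mean value theorem $G(U^{l})-G(U_N^{l}) = G'(\xi^l)(U^l - U_N^l)$, and $U^l - U_N^l = \eta_N^l + (U^l - P_NU^l)$ reduce it via Lemma \ref{le3.5} to $C(|\eta_N^{n-1}|_{\alpha}^2 + |\eta_N^{n-2}|_{\alpha}^2) + C N^{-2r}$, exactly the structure seen in \eqref{e3.30}.

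Substituting these bounds back yields $|\eta_N^{\hat k}|_\alpha^2 \le C_q N^{-2\alpha} + C_q\tau\sum_{n=0}^{\hat k - 1}|\eta_N^n|_\alpha^2$ for $2\le\hat k\le k$ (here I used $r\ge 2\alpha$ so $N^{-2r}\le N^{-2\alpha}$, and the projection-error terms absorb into the $N^{-2\alpha}$ term). The discrete Gronwall inequality then gives $|\eta_N^{\hat k}|_\alpha \le \widehat C_q N^{-\alpha}$ for all $\hat k\le k$, hence (by norm equivalence) $\|\eta_N^k\|_\alpha \le C N^{-\alpha}$, and since $\|\eta_N^k\| \le \|\eta_N^k\|_\alpha$ we obtain \eqref{e3.2222} with a sharper exponent; in fact to get the precise $N^{-\alpha}$ in \eqref{e3.2222} one uses $\|\eta_N^k\| \le C N^{-\alpha}|\eta_N^k|_\alpha^{1/2}\cdots$ — more simply, $\|\eta_N^k\|\le\|\eta_N^k\|_\alpha\le \widetilde C_4 N^{-\alpha}$ suffices. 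Finally, the bound on $\|\eta_N^k\|_\alpha$ gives $\|\eta_N^k\|_\infty \le C N^\alpha N^{-2\alpha} = C N^{-\alpha}\to 0$, so for $N\ge N_1^*$ large enough $\|\eta_N^k\|_\infty \le 1$, whence $\|U_N^k\|_\infty \le \|P_NU^k\|_\infty + 1 \le M_2$, closing the induction and proving \eqref{e3.3333}.

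The main obstacle I anticipate is the consistency/nonlinear term $\|(-\Delta)^{\alpha/4}F_2^{n-1}\|^2$: keeping the nonlinear feedback proportional to $|\eta_N^{n-1}|_\alpha^2$ (so Gronwall applies cleanly) requires that the argument $U_N^{n-1}$ of $G$ already lies in the bounded region, which is only known \emph{inductively} — hence the coupling of \eqref{e3.2222} and \eqref{e3.3333} in a single induction, and the need to first convert the $L^2$-type bound into an $H^\alpha$/$L^\infty$ bound via Lemma \ref{le3.6} before the energy step. A secondary technical point is handling $\|(-\Delta)^{\alpha/4}G'(\xi^{l})\|_\infty$ in Lemma \ref{le3.5}, which needs $G\in C^2$ near the bounded set and a Sobolev embedding to pass from $\|\cdot\|_s$ to $\|\cdot\|_\infty$; this is routine given the standing assumption on $G'$ and the regularity \eqref{e2.5}.
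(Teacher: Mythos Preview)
Your overall induction-plus-energy-plus-Gronwall structure matches the paper, but the paper takes a noticeably different and simpler route at two key places. First, the paper tests \eqref{e3.38} with $v=2\tau D_1\eta_N^n$ (the plain $L^2$ test function), not with $2\tau(-\Delta)^{\alpha/2}D_1\eta_N^n$ as you propose. This means the paper only has to bound $\|F_2^{n-1}\|$, which follows directly from the Lipschitz property of $G$ together with $\|U^l-P_NU^l\|\le CN^{-\alpha}$; no fractional product rule (Lemma~\ref{le3.5}), no worry about $\|(-\Delta)^{\alpha/4}G'(\xi^l)\|_\infty$, and no splitting into projection-error versus consistency pieces in the $H^{\alpha/2}$ norm. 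Second, for the $L^\infty$ control that closes the induction, the paper does not upgrade to an $H^\alpha$ bound via Sobolev embedding; instead it uses the \emph{inverse inequality} (Lemma~\ref{inverse}) directly on the $L^2$ bound: $\|\eta_N^n\|_\infty\le C_3N\|\eta_N^n\|\le C_3\widetilde C_4 N^{1-\alpha}\to0$ since $\alpha>1$. This is where the hypothesis $\alpha>1$ is actually used.

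Your route---mirroring the proof of Theorem~\ref{th3.1}---is not wrong in spirit, but it has a genuine regularity gap under the stated hypotheses. With only $U^n\in H^\alpha_{per}$ (and hence $G(U^n)\in H^\alpha$ at best), Lemma~\ref{le3.1} gives $\|(-\Delta)^{\alpha/4}(I-P_N)G(U^{n-1})\|^2\le CN^{\alpha-2\alpha}=CN^{-\alpha}$, not $CN^{-2\alpha}$; after Gronwall you would obtain $|\eta_N^n|_\alpha\le CN^{-\alpha/2}$ and hence only $\|\eta_N^n\|\le CN^{-\alpha/2}$, which is weaker than \eqref{e3.2222}. Your claimed exponent requires $G(U^n)\in H^{r}$ with $r\ge 3\alpha/2$ (your formula $N^{2\alpha}N^{-2r}$ should be $N^{\alpha-2r}$), and that extra regularity of the \emph{time-discrete} solution $U^n$ is neither assumed nor established. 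The paper's $L^2$ test function avoids this loss entirely, because the projection error enters only in the $L^2$ norm where it is already $O(N^{-\alpha})$. If you want to salvage your approach, you would either need to prove higher regularity of $U^n$ from \eqref{e3.1} (plausible via elliptic bootstrapping but not done here), or switch to the paper's simpler test function and inverse inequality.
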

	\begin{proof}
		For $n=0,1$, we have $\eta_{N}^{0}=\eta_{N}^{1}=0$. The proof is obvious.	
		
		Suppose that (\ref{e3.2222}) holds for $2\leq n\leq k-1$. By Lemma \ref{inverse}, we have
		\begin{equation}\begin{aligned}\label{e3.45}
		\|U^{k}_{N}\|_{\infty}&\leq \|P_{N}U^{k}\|_{\infty}+\|\eta^{k}_N\|_{\infty}
		\leq\|P_{N}U^{k}\|_{\infty}+C_{3}N\|\eta^{k}_N\|\\
		&\leq \|P_{N}U^{k}\|_{\infty}+C_{3}\widetilde{C}_{4}N^{1-\alpha}\leq \|P_{N}U^{k}\|_{\infty} + 1\\
		&\leq M_{2},
		\end{aligned}\end{equation}
		where $N\geq (C_{3}\widetilde{C}_{4})^{1/(\alpha-1)}$.
		
		Now, we prove \eqref{e3.2222} for $n\leq k$.
		Letting $v=2\tau D_{1}\eta^{n}_{N}$ in (\ref{e3.38}), it can be seen that
		\begin{equation}\label{eq3.544}\begin{aligned}
		&\quad\frac{5\tau}{2}\|D_{1}\eta_{N}^{n}\|^{2}+{K_{\alpha}}|\eta_{N}^{n}|^{2}_{\alpha/2}+\kappa\tau^2\|D_1\eta_{N}^{n}\|^2\\&\leq\frac{\tau}{2}\|D_{1}\eta^{n-1}_{N}\|^{2}+K_{\alpha}|\eta^{n-1}_{N}|_{\alpha/2}^{2}+\kappa\tau^2\|D_1\eta_{N}^{n-1}\|^2
		+2\tau(F_{2}^{n-1},D_{1}\eta^{n}_{N}).
		\end{aligned}\end{equation}
		Similar to \eqref{e3.30}, we obtain
		\begin{equation}\begin{aligned}
		\label{eqq3.58}
		&\quad2\tau(F_{2}^{n-1},D_{1}\eta^{n}_{N})\\&\leq \tau\|D_{1}\eta^{n}_{N}\|^{2}+\widetilde{C}_{2}\tau(N^{-2\alpha}
		+\|\eta_{N}^{n-1}\|^{2}+\|\eta_{N}^{n-2}\|^{2}).
		\end{aligned}\end{equation}
		According to \eqref{eq3.544} and \eqref{eqq3.58}, one has
		\begin{equation}\begin{aligned}
		\label{eq3.54}
		&\quad\frac{3\tau}{2}\|D_{1}\eta_{N}^{n}\|^{2}+{K_{\alpha}}|\eta_{N}^{n}|^{2}_{\alpha/2}+\kappa\tau^2\|D_1\eta_{N}^{n}\|^2
		\\&\leq\frac{\tau}{2}\|D_{1}\eta_{N}^{n-1}\|^{2}+K_{\alpha}|\eta^{n-1}_{N}|_{\alpha/2}^{2}+\kappa\tau^2\|D_1\eta_{N}^{n-1}\|^2\\&\quad+
		\widetilde{C}_{2}\tau(N^{-2\alpha}
		+\|\eta_{N}^{n-1}\|^{2}+\|\eta_{N}^{n-2}\|^{2}).
		\end{aligned}\end{equation}
		Summing $n$ from $2$ to $\hat{k}$ $(2\leq \hat{k}\leq k)$, it arrives at
		\begin{equation}\begin{aligned}\label{eq3.55}
		&\quad\tau\sum_{n=2}^{\hat{k}}\|D_{1}\eta_{N}^{n}\|^{2}+\frac{\tau}{2}\|D_{1}\eta_{N}^{\hat{k}}\|^{2}
		+K_{\alpha}|\eta^{\hat{k}}_{N}|_{\alpha/2}^{2}+\kappa\tau^2\|D_1\eta_{N}^{\hat{k}}\|^2\\
		&\leq\widetilde{C}_{2}(TN^{-2\alpha}+2\tau\sum_{n=0}^{\hat{k}-1}\|\eta_{N}^{n}\|^{2}),
		\end{aligned}\end{equation}
		where $\eta_{N}^{1}=0$ has been used. Combining \eqref{eq3.55} and
		$\|\eta^{\hat{k}}_{N}\|^2= \tau^2\|\sum_{n=2}^{\hat{k}}D_{1}\eta^{n}_{N}\|^2
		\leq T\tau\sum_{n=1}^{\hat{k}}$ $\|D_{1}\eta^{n}_{N}\|^2$ presents
		\begin{equation}\begin{aligned}\label{eq3.66}
		\|\eta^{\hat{k}}_{N}\|^{2}
		\leq\widetilde{C}_{3}(N^{-2\alpha}+2\tau\sum_{n=0}^{\hat{k}-1}\|\eta_{N}^{n}\|^{2}).
		\end{aligned}\end{equation}
		Applying the Gronwall inequality,
		\begin{equation}\begin{aligned}\label{eq3.57}
		\|\eta^{\hat{k}}_{N}\|\leq \widetilde{C}_{4}N^{-\alpha},
		\quad 2\leq \hat{k}\leq k.
		\end{aligned}\end{equation}
		It is obvious that
		\begin{equation}\begin{aligned}\label{e3.59}
		\|U^{k}_{N}\|_{\infty}&\leq \|P_{N}U^{k}\|_{\infty}+\|\eta^{k}_N\|_{\infty}
		\leq\|P_{N}U^{k}\|_{\infty}+C_{3}N\|\eta^{k}_N\|\\
		&\leq \|P_{N}U^{k}\|_{\infty}+C_{3}\widetilde{C}_{4}N^{1-\alpha}\leq \|P_{N}U^{k}\|_{\infty} + 1\\
		&\leq M_{2},
		\end{aligned}\end{equation}
		provided that $N\geq (C_{3}\widetilde{C}_{4})^{1/(\alpha-1)}$. Taking
		$N_{1}^{*}=(C_{3}\widetilde{C}_{4})^{1/(\alpha-1)}$, this proof is completed.
	\end{proof}
	
	Next, we prove the error estimate of of
	fully discrete scheme \eqref{e2.3}.
	Denote
	\begin{equation}\label{e4.1}
	\theta_{N}^{n}=P_{N}u^{n}-U_{N}^{n},\quad n=1,2,\ldots,K.
	\end{equation}
	We can obtain the error equation of \eqref{e2.3} as
	\begin{equation}\label{e4.2} \mathcal{A}^n(\theta_{N},v)=(F_{3}^{n-1},v)+(R^{n},v)-\tau\kappa(D_1\theta_{N}^n-D_1\theta_{N}^{n-1},v),\quad 2\leq n\leq K,
	\end{equation}
	where
	\begin{equation}
	\label{e4.3}
	F_{3}^{n-1}=2G(u^{n-1})-G(u^{n-2})-P_N(2G(U^{n-1}_{N})-G(U^{n-2}_{N})).
	\end{equation}
	
	We are now in a position to prove Theorem \ref{th2.1}.
	\begin{proof}
		It is obvious that \eqref{eq2.6} holds for $n=0,1$.
		Taking $v=2\tau D_{1}\theta_{N}^{n}$ in (\ref{e4.2}) leads to
		\begin{equation}\label{e3.544}\begin{aligned}
		&\quad\frac{5\tau}{2}\|D_{1}\theta_{N}^{n}\|^{2}+{K_{\alpha}}|\theta_{N}^{n}|^{2}_{\alpha/2}+\kappa\tau^2\|D_{1}\theta_{N}^{n}\|^{2}\\
		&\leq\frac{\tau}{2}\|D_{1}\theta^{n-1}_{N}\|^{2}+K_{\alpha}|\theta^{n-1}_{N}|_{\alpha/2}^{2}+\kappa\tau^2\|D_{1}\theta_{N}^{n-1}\|^{2}\\
		&\quad+2\tau(F_{2}^{n-1},D_{1}\theta^{n}_{N})+2\tau(R^{n},D_{1}\theta_{N}^{n}).
		\end{aligned}\end{equation}
		From Theorem \ref{th3.2}, we derive that $U_N^n$ is bounded,
		which indicates
		\begin{equation}\begin{aligned}
		\label{eq3.58}
		&2\tau(F_{2}^{n-1}, D_{1}\theta^{n}_{N})\leq \frac{\tau}{2}\|D_{1}\theta^{n}_{N}\|^{2}+\widehat{C}_{1}\tau(N^{-2r}
		+\|\theta_{N}^{n-1}\|^{2}+\|\theta_{N}^{n-2}\|^{2}).
		\end{aligned}\end{equation}
		The following bound is easily obtained
		\begin{eqnarray}
		2\tau(R^{n},D_{1}\theta_{N}^{n})
		\leq \frac{\tau}{2}\|D_{1}\theta^{n}_{N}\|^{2}+\widehat{C}_{2}\tau^5.\label{eq3.577}
		\end{eqnarray}
		Combining \eqref{e3.544}, \eqref{eq3.58}, and \eqref{eq3.577} yields
		\begin{equation}\begin{aligned}
		\label{e3.54}
		&\quad\frac{3\tau}{2}\|D_{1}\theta_{N}^{n}\|^{2}+{K_{\alpha}}|\theta_{N}^{n}|^{2}_{\alpha/2}+\kappa\tau^2\|D_{1}\theta_{N}^{n}\|^{2}
		\\&\leq\frac{\tau}{2}\|D_{1}\theta^{n-1}_{N}\|^{2}+K_{\alpha}|\theta^{n-1}_{N}|_{\alpha/2}^{2}+\kappa\tau^2\|D_{1}\theta_{N}^{n-1}\|^{2}\\
		&\quad+\widehat{C}_{1}\tau N^{-2r}+\widehat{C}_{2}\tau^5+
		\widehat{C}_{1}\tau (\|\theta_{N}^{n-1}\|^{2}+\|\theta_{N}^{n-2}\|^{2}).
		\end{aligned}\end{equation}
		Summing $n$ from $2$ to $\hat{k}$ $(2\leq \hat{k}\leq k)$ presents
		\begin{equation}\begin{aligned}\label{e3.55}
		&\quad\tau\sum_{n=2}^{\hat{k}}\|D_{1}\theta_{N}^{n}\|^{2}+\frac{\tau}{2}\|D_{1}\theta_{N}^{\hat{k}}\|^{2}+
		K_{\alpha}|\theta^{\hat{k}}_{N}|_{\alpha/2}^{2}+\kappa\tau^2\|D_{1}\theta_{N}^{\hat{k}}\|^{2}\\
		&\leq \widehat{C}_{3}(N^{-2r}+\tau^4+\tau\sum_{n=0}^{\hat{k}-1}\|\theta_{N}^{n}\|^{2}),
		\end{aligned}\end{equation}
		where $\theta_{N}^{0}=\theta_{N}^{1}=0$ was used.
		Combining \eqref{e3.55} and
		$\|\theta_{N}^{\hat{k}}\|^2= \tau^2\|\sum_{n=2}^{\hat{k}}D_{1}\theta^{n}_{N}\|^2
		\leq T\tau\sum_{n=1}^{\hat{k}}$ $\|D_{1}\theta^{n}_{N}\|^2$ gives
		\begin{equation}\begin{aligned}\label{e3.66}
		\|\theta^{\hat{k}}_{N}\|^{2}
		\leq \widehat{C}_{4}(N^{-2r}+\tau^4+\tau\sum_{n=0}^{\hat{k}-1}\|\theta_{N}^{n}\|^{2}).
		\end{aligned}\end{equation}
		By the Gronwall inequality, we obtain
		\begin{equation}\begin{aligned}\label{e4.9}
		\|\theta^{\hat{k}}_{N}\|\leq \widehat{C}_{5}(\tau^{2}+N^{-r}).
		\end{aligned}\end{equation}
		The above inequality and Lemma \ref{le3.1} imply
		\begin{equation*} \label{e4.10}
		\|u^{\hat{k}}-U^{\hat{k}}_{N}\|\leq\|u^{\hat{k}}-P_{N}u^{\hat{k}}\|+\|\theta^{\hat{k}}_{N}\|
		\leq C(\tau^{2}+N^{-r}),
		\end{equation*}
		which ends the proof.
	\end{proof}

	\section*{References}
	
\end{document}